\newtheorem{theorem}{Theorem}[section]
\newtheorem{lemma}[theorem]{Lemma}
\newtheorem{proposition}[theorem]{Proposition}
\newtheorem{corollary}[theorem]{Corollary}
\newtheorem{claim}{\textsc{Claim}}
\theoremstyle{definition}
\newtheorem{definition}[theorem]{Definition}
\newtheorem{example}[theorem]{Example}
\newtheorem{remark}[theorem]{Remark}
\definecolor{blue-url}{RGB}{0,0,100}
\definecolor{red-url}{RGB}{100,0,0}
\definecolor{green-url}{RGB}{0,100,0}
\renewcommand{\emptyset}{\varnothing}
\renewcommand{\setminus}{\smallsetminus}
\renewcommand{\,}{\kern 0.1em}
\providecommand{\RR}{\mathbin{R}}
\providecommand\llb{\llbracket}
\providecommand\rrb{\rrbracket}
\newcommand{\Conv}{\mathop{\scalebox{2.5}{\raisebox{-0.2ex}{$\ast$}}}}%
\DeclareFontFamily{OMX}{MnSymbolE}{}
\DeclareSymbolFont{MnLargeSymbols}{OMX}{MnSymbolE}{m}{n}
\DeclareFontShape{OMX}{MnSymbolE}{m}{n}{
	<-6>  MnSymbolE5
	<6-7>  MnSymbolE6
	<7-8>  MnSymbolE7
	<8-9>  MnSymbolE8
	<9-10> MnSymbolE9
	<10-12> MnSymbolE10
	<12->   MnSymbolE12
}{}
\DeclareFontShape{OMX}{MnSymbolE}{b}{n}{
	<-6>  MnSymbolE-Bold5
	<6-7>  MnSymbolE-Bold6
	<7-8>  MnSymbolE-Bold7
	<8-9>  MnSymbolE-Bold8
	<9-10> MnSymbolE-Bold9
	<10-12> MnSymbolE-Bold10
	<12->   MnSymbolE-Bold12
}{}
\let\llangle\@undefined
\let\rrangle\@undefined
\DeclareMathDelimiter{\llangle}{\mathopen}%
{MnLargeSymbols}{'164}{MnLargeSymbols}{'164}
\DeclareMathDelimiter{\rrangle}{\mathclose}%
{MnLargeSymbols}{'171}{MnLargeSymbols}{'171}
\begin{document}

\title{Factorization in Monoids and Rings}
\author{Salvatore Tringali}
\address{School of Mathematical Sciences,
Hebei Normal University | Shijiazhuang, Hebei province, 050024 China}
\email{salvo.tringali@gmail.com}
\urladdr{http://imsc.uni-graz.at/tringali}
\subjclass[2010]{Primary 20M10, 20M13. Secondary 13A05, 16U30, 20M14}
%
%
%
%

\keywords{Acyclic monoids; atomic monoids; BF-monoids; FF-monoids; fundamental theorem of arithmetic; minimal factorizations; primes; unique and non-unique factorization.}
%
%
\begin{abstract}
\noindent{}
Let $H^\times$ be the group of units of a multiplicatively written monoid $H$. We say $H$ is acyclic if $xyz \ne y$ for all $x, y, z \in H$ with $x \notin H^\times$ or $z \notin H^\times$; unit-cancellative if $yx \ne x \ne xy$ for all $x, y \in H$ with $y \notin H^\times$; f.g.u. if there is a finite set $A \subseteq H$ such that every non-unit of $H$ is a finite product of elements of the form $uav$ with $u, v \in H^\times$ and $a \in A$; l.f.g.u. if, for each $x \in H$, the smallest divisor-closed submonoid of $H$ containing $x$ is f.g.u; and atomic if every non-unit can be written as a finite product of atoms, where an atom is a non-unit that does not factor into a product of two non-units.

We generalize to l.f.g.u. or acyclic l.f.g.u. monoids a few results so far only known for unit-cancellative l.f.g.u. commutative monoids (cancellative monoids are unit-cancellative, and a commutative monoid is unit-cancellative if and only if it is acyclic). In particular, we prove the following:
\begin{itemize}
\item If $H$ is an atomic l.f.g.u. monoid, then every non-unit has only finitely many factorizations (into atoms) that are ``minimal'' and ``pairwise non-equivalent'' (with respect to some naturally defined rel\-ations on the free monoid over the ``alphabet'' of atoms).
\item If $H$ is an acyclic l.f.g.u. monoid, then it is atomic; and moreover, each element has only finitely many ``pairwise non-equivalent'' factorizations if we additionally assume $H$ to be commutative.
\end{itemize}
\end{abstract}
\maketitle
\thispagestyle{empty}

\section{Introduction}
\label{sec:intro}
By the fundamental theorem of arithmetic, every positive integer other than $1$ can be expressed as a non-empty product of primes in an essentially unique way. Factorization theory is, on the whole, the study of various phenomena related to the possibility or impossibility of extending such a decomposition to arbitrary rings and monoids (see \S{ }\ref{sec:preliminaries} for basic terminology).

Over the years, the field has branched out into many subfields, ranging from
commutative and non-commutative algebra to semigroup theory, from additive combinatorics to the abstract theory of zeta functions. Detailed information
about problems, methods, results, and trends can be found in the conference proceedings \cite{And97,ChGl00,Ch05,ChFoGeOb16}, in the surveys \cite{BaCh11,BaWi13,Ge16c,GeZh19}, and in the mon\-o\-graphs \cite{Nark74,GeHK06,FoHoLu13}.

So far, research in the area has been centered on rings
and monoids --- e.g., monoids of modules, Mori domains, Krull monoids, rings of integer-valued polynomials, monoids of ideals, orders in central simple algebras, monoids of matrices, and numerical monoids ---,
where the structures in play are commutative or cancellative. It is only recently \cite{Fa-Tr18, An-Tr18} that some key aspects of factorization theory have been systematically extended to possibly non-commutative and non-cancellative monoids (and thence to rings that need not be domains), so as
to widen the spectrum of potential applications and foster interaction with other fields.
In the present paper, we further contribute to this line of research.

In particular, we say that a monoid $H$ is \emph{atomic} if every non-unit element of $H$ is a product of atoms (Definition \ref{def:2.2}); \emph{l.f.g.u.} if, for every $x \in H$, the smallest divisor-closed submonoid of $H$ containing $x$ is, up to units, finitely generated (Definition \ref{def:fg-monoids-and-the-like}); and \emph{acyclic} if certain ``cyclic relations'' are forbidden in $H$ (Definition \ref{def:acyclic}). Among others, we will prove the following results, so far only known for commutative and ``nearly canc\-ellative'' monoids (and rings):
\begin{itemize}
\item An atomic l.f.g.u. monoid is FmF (Corollary \ref{cor:lfgu-is-FmF}), that is, every non-unit has only finitely many factorizations into atoms that are ``minimal'' and ``pairwise non-equivalent'' with respect to some naturally defined rel\-ations on the free monoid over the ``alphabet'' of atoms.
\item An acyclic l.f.g.u. monoid is atomic (Corollary \ref{cor:lfgu-acyclic-mons-are-atomic}), and is actually FF (meaning that each element has only finitely many ``pairwise non-equivalent'' factorizations into atoms) if we additionally assume that the monoid is commutative (Corollary \ref{cor:comm-unit-canc-lfgu-mons-are-FF}).
\end{itemize}
In addition, we will establish a characterization of ``unique factorization monoids'' (Theorem \ref{th:factoriality}) in terms of a special type of atoms we call \emph{powerful} (Definition \ref{def:powerful-atoms}), whose relation with primes (Definition \ref{def:prime}) and the fundamental theorem of arithmetic is clarified by Propositions \ref{prop:primes} and \ref{prop:5.8} and Example \ref{exa:5.10}.

The paper closes with some ideas for further research (\S{ }\ref{sec:6}) and includes many stubs that shall help, we hope, to shed light on some delicate points (e.g., Examples \ref{exa:non-atomic-2-generator-1-relator-canc-mon}, \ref{exa:atomic-2-gen-1-rel-canc-monoid}, \ref{exa:5.4}, and \ref{exa:5.7}).

\section{Preliminaries.}
\label{sec:preliminaries}
In this section, we establish some notations and terminology used all through the paper and prepare the ground for the study of l.f.g.u. and acyclic monoids in \S\S{ }\ref{sec:lfgu-monoids} and \ref{sec:acyclic-monoids}. Further terminology and notations, if not explained when first introduced, are standard or should be clear from context.

\subsection{Generalities}\label{sec:generalities}
We use $\mathbf N$ for the non-negative integers, $\bf Z$ for the integers, $\mathbf Q$ for the rationals, and $\mathbf R$ for the reals. For all $a, b \in \mathbf R$, we let $\llb a, b \rrb := \{x \in \mathbf Z: a \le x \le b\}$ be the \emph{discrete interval} between $a$ and $b$. Unless noted otherwise, we reserve the letters $m$ and $n$ (with or without subscripts) for positive integers, and the letters $i$, $j$, $k$, and $\ell$ for non-negative integers.

Given a set $X$ and an integer $k \ge 0$, we write $X^{\times k}$ for the Cartesian product of $k$ copies of $X$; and if $R$ is a binary relation on $X$ and $x$ is an element of $X$, we let
$\llb x \rrb_R := \{y \in X: x \RR y\} \subseteq X$, where $x \RR y$ is a shorthand notation for $(x,y) \in R$. A (\emph{partial}) \emph{preorder} on $X$ is a binary relation $R$ on $X$ such that $x \RR x$ for all $x \in X$ (i.e., $R$ is \emph{reflexive}), and $x \RR z$ whenever $x \RR y$ and $y \RR z$ (i.e., $R$ is \emph{transitive}).
An \emph{order} on $X$ is a preorder $R$ on $X$ such that, if $x \RR y$ and $y \RR x$, then $x = y$; and an \emph{equivalence} (relation) on $X$ is a preorder $R$ on $X$ such that $x \RR y$ if and only if $y \RR x$.
If $R$ is a preorder (resp., an order) on $X$, then we call the pair $(X, R)$ a \emph{preset} (resp., a \emph{poset}).

We will commonly denote a preorder by the symbol $\preceq$ (with or without subscripts or superscripts), and reserve the symbol $\leq$ (and its ``dual'' $\geq$) for the usual order on the reals and its subsets. Accordingly, we shall write $x \prec y$ if $x \preceq y$ and $y \not\preceq x$; notice that ``$x \prec y$'' is a stronger condition than ``$x \preceq y$ and $x \ne y$'', and the two conditions are equivalent when $\preceq$ is an order.

If $(X, \preceq)$ is a preset and $Y$ is a subset  of $X$, we let a \emph{$\preceq$-minimal element} of $Y$ be an element $\bar{y} \in Y$ with the property that there exists no element $y \in Y$ such that $y \prec \bar{y}$.

\subsection{Monoids.}
\label{subsec:fund-defs}
Throughout, monoids will be usually written multiplicatively and, unless a statement to the contrary is made, need not have any special property (e.g., commutativity). 

Let $H$ be a monoid with identity $1_H$. We denote by $H^\times$ and $\mathscr A(H)$, resp., the set of \emph{units} and the set of \emph{atoms} of $H$, where $u \in H$ is a unit if $uv = vu = 1_H$ for some provably unique $v \in H$, called the \emph{inverse} of $u$ (in $H$) and denoted by $u^{-1}$; and $a \in H$ is an atom if $a \notin H^\times$ and $a \ne xy$ for all $x, y \in H \setminus H^\times$. It is easily seen that $H^\times$ is a subgroup of $H$, hence referred to as the \emph{group of units} of $H$. 

A (monoid) \emph{congruence} on $H$ is an equivalence relation $R$ on $H$ such that, if $x \RR u$ and $y \RR v$, then $xy \RR uv$. Note that, if $R$ is a congruence on $H$, we will usually write $x \equiv y \bmod R$ in place of $x \RR y$.

We say $H$ is \emph{reduced} if $H^\times = \{1_H\}$; \emph{cancellative} if $xz = yz$ or $zx = zy$, for some $x, y, z \in H$, implies $x = y$; and \emph{unit-cancellative} if $xy \ne x \ne yx$ for all $x, y \in H$ with $y \notin H^\times$.

Unit-cancellative monoids have been the subject of several recent papers in fac\-tor\-i\-za\-tion theory, both in the commutative \cite{FGKT,Ge-Re19} and in the non-commutative setting \cite{Ge-Schw18,Fa-Tr18,An-Tr18}; and among others, it is obvious that a cancellative monoid is unit-cancellative (see also Remark \ref{rem:history}).

Given $x \in H$, we call
$\text{ord}_H(x) := |\{x^n: n \in \mathbf N^+\}| \in \mathbf N \cup \{\infty\}$
the \emph{order} of $x$ (relative to $H$); and we say that $x$ is an element of finite order (in $H$) if $\text{ord}_H(x) < \infty$, an \emph{idempotent} if $x^2 = x$, and a \emph{non-trivial idempotent} if $x^2 = x \ne 1_H$. For all $X_1, \ldots, X_n \subseteq H$, we denote by
\begin{equation*}\label{equ:def-of-setwise-multiplication}
X_1 \cdots X_n := \{x_1 \cdots x_n: x_1 \in X_1, \ldots, x_n \in X_n\} \subseteq H
\end{equation*}
the \emph{setwise product} of $X_1, \ldots, X_n$ (relative to $H$); and by abuse of notation we replace $X_i$ with $x_i$ on the left of the symbol ``:='' 
in the above definition
if $X_i = \{x_i\}$ for some $i \in \llb 1, n \rrb$ and there is no risk of confusion. In particular, for every $X \subseteq H$ we let
$$
\mathrm{Sgp}\langle X \rangle_H := \bigcup_{n \ge 1} X^n = \{x_1 \cdots x_n: x_1, \ldots, x_n \in X \} \subseteq H
$$
be the \emph{subsemigroup} of $H$ \emph{generated by $X$}, where $X^n$ is the setwise product of $n$ copies of $X$ (note that $1_H$ need not be in $ \mathrm{Sgp}\langle X \rangle_H$, and hence $\mathrm{Sgp}\langle X \rangle_H$ need not be a submonoid of $H$). We shall commonly write 
$
\mathrm{Sgp}\langle x_1, \ldots, x_n \rangle_H$
instead of $\mathrm{Sgp}\langle X \rangle_H$ when $X$ is a non-empty finite set with elements $x_1, \ldots, x_n$.

We denote by $\mid_H$ and $\simeq_H$, resp., the binary relations on $H$ defined by: $x \mid_H y$ if and only if $x \in HyH$; and $x \simeq_H y$ if and only if $x \in H^\times y H^\times$. 
We will often use without comment that $\mid_H$ is a preorder and $\simeq_H$ is an equivalence relation (on $H$); and use the symbols $\nmid_H$ and $\not\simeq_H$ with their obvious meaning.

With the above in place, we take a \emph{divisor-closed submonoid} of $H$ to be a sub\-monoid $M$ of $H$ such that, if $x \mid_H y$ and $y \in M$, then $x \in M$; and for $X \subseteq H$ we set
$$
\llangle X \rrangle_H := \bigcap \left\{M: X \subseteq M \text{ and }M \text{ is a divisor-closed submonoid of } H\right\} \subseteq H.
$$
Observe that $\llangle X \rrangle_H$ is a divisor-closed submonoid of $H$ containing $X$, whence we call $\llangle X \rrangle_H$ the \emph{divisor-closed submonoid of $H$ generated by $X$}. We shall commonly write
$
\llangle x_1, \ldots, x_n \rrangle_H 
$
instead of $\llangle X \rrangle_H$ when $X$ is a non-empty finite set with elements $x_1, \ldots, x_n$.

\subsection{Free monoids.}
\label{sec:free-monoids}
Let $X$ be a set, in this context often dubbed as an ``alphabet''. We will write $\mathscr F(X)$ for the free monoid over $X$; and refer to an element of $\mathscr F(X)$ as an \emph{$X$-word}, or simply as a \emph{word} if no confusion can arise. We shall use the symbols $\ast_X$ and $\varepsilon_X$, resp., for the operation and the identity of $\mathscr F(X)$; and drop the subscript ``$X$'' from this notation when there is no risk of ambiguity.

We recall that $\mathscr F(X)$ consists, as a set, of all \emph{finite} tuples of elements of $X$; and $\mathfrak u \ast_X \mathfrak v$ is the \emph{con\-cat\-e\-na\-tion} of two such tuples $\mathfrak u$ and $\mathfrak v$. Accordingly, it is found that the identity of $\mathscr F(X)$ is the ``empty tuple'' of elements of $X$, which, in dealing with free monoids, we rather call the \emph{empty $X$-word}. 

For technical reasons, we will assume that, if $Y \subseteq X$, then $\mathscr F(Y) \subseteq \mathscr F(X)$. In particular, this implies that the ``empty tuple'' of elements of $X$ has been implicitly defined in a way that it does not depend on the choice of $X$, as is possible to do, e.g., in Tarski-Grothendieck set theory (by requiring that all sets belong to some fixed universe) and other axiomatic set theories.

We take the \emph{length} of an $X$-word $\mathfrak u$, denoted by $\|\mathfrak u\|_X$, to be the unique non-negative integer $k$ such that $\mathfrak u \in X^{\times k}$; in particular, the empty word is the only $X$-word whose length is zero. Notice that, if $\mathfrak u$ is an $X$-word of positive length $k$, then there are de\-ter\-mined $u_1, \ldots, u_k \in X$ for which $\mathfrak u = u_1 \ast \cdots \ast u_k$. 

Given $\mathfrak u, \mathfrak v \in \mathscr F(X)$, we set $
\mathfrak u^{\ast 0} := \varepsilon$, 
$\mathfrak u^{\ast 1} := \mathfrak u$, and $
\mathfrak u^{\ast n} := \mathfrak u^{\ast(n-1)} \ast \mathfrak u$ for all $n \in \mathbf N^+$; and we say that  $\mathfrak u$ is a \emph{subword} of $\mathfrak v$ if either $\mathfrak u$ is empty, or $\mathfrak v$ is a non-empty word of length $\ell$, say $\mathfrak v = v_1 \ast \cdots \ast v_\ell$, and $\mathfrak u = v_{i_1} \ast \cdots \ast v_{i_k}$ for some $k \in \mathbf N^+$ and $i_1, \ldots, i_k \in \llb 1, \ell \rrb$ such that $i_j < i_{j+1}$ for each $j \in \llb 1, k-1 \rrb$.

Our interest in words and subwords is related, on the one hand, to the notion of ``factorization'' (see \S{ }\ref{subsec:factorizations}) and, on the other, to the following combinatorial result, which is commonly known as Higman's lemma and will be a main ingredient in the proof of Corollary \ref{cor:lfgu-is-FmF}.

\begin{lemma}\label{lem:higman-lemma}
	If $X$ is a finite alphabet and $(\mathfrak u_i)_{i \ge 1}$ is an infinite sequence of $X$-words, then there exist $i, j \in \mathbf N^+$ with $i \ne j$ such that $\mathfrak u_i$ is a subword of $\mathfrak u_j$.
\end{lemma}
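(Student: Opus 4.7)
My plan is to apply the Nash--Williams \emph{minimal bad sequence} technique. Call an infinite sequence $(\mathfrak u_i)_{i \ge 1}$ of $X$-words \emph{bad} if there are no indices $i < j$ with $\mathfrak u_i$ a subword of $\mathfrak u_j$. Since the subword relation is reflexive, it is enough to show that no bad sequence of $X$-words exists, which I will do by contradiction.

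Assuming a bad sequence does exist, I will construct recursively a \emph{minimal} bad sequence $(\mathfrak u_i)_{i \ge 1}$ as follows: having chosen $\mathfrak u_1, \ldots, \mathfrak u_{i-1}$, let $\mathfrak u_i$ be any $X$-word of smallest possible length $\|\mathfrak u_i\|$ such that the finite tuple $(\mathfrak u_1, \ldots, \mathfrak u_i)$ can be completed to a bad infinite sequence. The minimum is attained since $\mathbf N$ is well ordered, and the construction itself requires dependent choice. No $\mathfrak u_i$ can be the empty word $\varepsilon$, for $\varepsilon$ is a subword of every $X$-word. So write $\mathfrak u_i = x_i \ast \mathfrak v_i$ with $x_i \in X$ and $\mathfrak v_i \in \mathscr F(X)$. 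Because $X$ is finite, the pigeonhole principle yields a letter $x \in X$ and an increasing sequence of indices $i_1 < i_2 < \cdots$ with $x_{i_k} = x$ for each $k \in \mathbf N^+$.

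The decisive step is to verify that the ``shortened'' sequence
\[
(\mathfrak w_n)_{n \ge 1} := \mathfrak u_1, \ldots, \mathfrak u_{i_1 - 1}, \mathfrak v_{i_1}, \mathfrak v_{i_2}, \mathfrak v_{i_3}, \ldots
\]
is still bad. Its $i_1$-th term is $\mathfrak v_{i_1}$, whose length is strictly less than $\|\mathfrak u_{i_1}\|$, so the existence of such a sequence will contradict the minimality built into the choice of $\mathfrak u_{i_1}$. Badness of $(\mathfrak w_n)$ reduces to two observations: (a) if $\mathfrak u_a$ with $a < i_1$ were a subword of some $\mathfrak v_{i_k}$, then $\mathfrak u_a$ would also be a subword of $\mathfrak u_{i_k} = x \ast \mathfrak v_{i_k}$, contradicting badness of the original sequence; and (b) if $\mathfrak v_{i_k}$ were a subword of $\mathfrak v_{i_l}$ with $k < l$, then prepending the common letter $x$ would exhibit $\mathfrak u_{i_k}$ as a subword of $\mathfrak u_{i_l}$, again a contradiction. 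The pairs inside the prefix $\mathfrak u_1, \ldots, \mathfrak u_{i_1 - 1}$ are fine by badness of the original sequence.

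The main obstacle I foresee is the clean formalization of the minimal bad sequence construction: at each step of the recursion one must confirm that some minimal length is actually realized among admissible candidates, and then invoke dependent choice to weld these choices into a single infinite sequence. Once the minimal bad sequence is in hand, the shortening step itself is essentially mechanical, resting only on the trivial fact that prepending a fixed letter to both words preserves the subword relation.
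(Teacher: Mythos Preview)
Your argument is correct: this is the standard Nash--Williams minimal bad sequence proof, and each step (non-emptiness of the $\mathfrak u_i$, pigeonholing on first letters, and the two cases verifying badness of the shortened sequence) goes through as you describe. One cosmetic point: the appeal to reflexivity in the first paragraph is not really what reduces the problem to ruling out bad sequences; rather, proving that every sequence admits $i<j$ with $\mathfrak u_i$ a subword of $\mathfrak u_j$ is simply a stronger statement than the one with $i\neq j$, so the reduction is immediate.

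As for comparison with the paper: there is no proof to compare against. The paper does not prove the lemma at all but merely records it as a special case of \cite[Theorem 4.4]{Hi52} and remarks that it generalizes Dickson's lemma. Your self-contained Nash--Williams argument is therefore more than the paper supplies; it is also the approach most readers would expect, and it makes the result independent of the cited reference.
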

This is essentially a special case of \cite[Theorem 4.4]{Hi52} and can be thought of as a ``non-commutative generalization'' of Dickson's lemma, another combinatorial result, usually attributed to L.E. Dickson, which has been crucial to the study of the arithmetic of integral domains and ``nearly cancellative'' commutative monoids (see \cite[Theorem 2.9.13]{GeHK06}, \cite[Proposition 7.3]{Ge16c}, and \cite[Proposition 3.4]{FGKT} for some representative results in this direction).

\subsection{Factorizations.} 
\label{subsec:factorizations}
Most of the contents of this section are borrowed from \cite{Fa-Tr18} and \cite{An-Tr18}, where one can read extensively about differences and similarities with 
alternative approaches to the study of factorization in settings (namely, commutative rings and cancellative or com\-mu\-ta\-tive monoids) that are, however, less general than ours; in particular,
see \cite[Remarks 2.6 and 2.7]{Fa-Tr18} and \cite[\S{ }2.4 and Remarks 4.4 and 4.5]{An-Tr18}.

Let $H$ be a monoid. We write $\pi_H$ for the unique monoid homomorphism from $\mathscr F(H)$ to $H$ such that $\pi_H(a) = a$ for every atom $a \in H$ 
(we call $\pi_H$ the \emph{factorization homomorphism} of $H$), and we denote by $\mathscr{C}_H$ the monoid con\-gru\-ence on $\mathscr F(\mathscr{A}(H))$ defined by: $\mathfrak a \equiv \mathfrak b \bmod \mathscr C_H$ if and only if either $\mathfrak a = \mathfrak b = \varepsilon$; or $\mathfrak a$ and $\mathfrak b$ are non-empty $\mathscr A(H)$-words of length $n$, say $\mathfrak a = a_1 \ast \cdots \ast a_n$ and $\mathfrak b = b_1 \ast \cdots \ast b_n$, such that $\pi_H(\mathfrak a) = \pi_H(\mathfrak a)$ and $a_1 \simeq_H b_{\sigma(1)}$, \ldots, $a_n \simeq_H b_{\sigma(n)}$ for some permutation $\sigma$ of the discrete interval $\llb 1, n \rrb$.

\begin{definition}
\label{def:2.2}
Given $x \in H$, we take the \emph{set of factorizations} of $x$ (into atoms of $H$) to be the set
\[
\mathcal{Z}_H(x) := \pi_H^{-1}(x) \cap \mathscr F(\mathscr{A}(H));
\]
and then we refer to the sets 
\[
{\sf L}_H(x) := \big\{\|\mathfrak a\|_{\mathscr A(H)}: \mathfrak a \in \mathcal{Z}_H(x)\big\} \subseteq \mathbf N
\quad\text{and}\quad
\mathsf{Z}_H(x) := \bigl\{ \llb \mathfrak{a} \rrb_{\mathscr{C}_H} : \mathfrak{a}\in \mathcal{Z}_H(x) \bigr\}\subseteq \mathscr F(\mathscr{A}(H))/\mathscr{C}_H,
\]
resp., 
as the \emph{set of lengths} and the \emph{set of factorization classes} of $x$ (relative to the atoms of $H$).

Accordingly, we say that the monoid $H$ is 
\begin{itemize}
\item \emph{atomic} if $\mathcal{Z}_H(x)$ is non-empty for every $x \in H \setminus H^\times$; 
\item BF (resp., FF) if $H$ is atomic and $\mathsf{L}_H(x)$ (resp., $\mathsf{Z}_H(x)$) is finite for each $x \in H$; 
\item \textup{HF} or \emph{half-factorial} (resp., \emph{factorial}) if $|\mathsf L_H(x)| = 1$ (resp., $|\mathsf{Z}_H(x)| = 1$) for every $x \in H \setminus H^\times$.
\end{itemize}
\end{definition}
The above definitions are modeled after A. Geroldinger and F. Halter-Koch's monograph \cite{GeHK06} on factorization in integral domains and cancellative commutative monoids. However, factorizations and sets of lengths in non-commutative or non-cancellative monoids tend to ``blow up'' in a predictable way (due, e.g., to the presence of non-trivial idempotents), with the result that most of the invariants studied in the ``classical theory'' lose their significance. To counter this phenomenon --- inherent to the structures studied in the present work ---, we adopt the approach set forth in \cite[\S{ }4]{An-Tr18}.

We denote by $\preceq_H$ 
the binary relation on $\mathscr F^\ast(\mathscr A(H))$ defined by: $\mathfrak a \preceq_H \mathfrak b$ if and only if either $\mathfrak a = \mathfrak b = \varepsilon$; or $\mathfrak a$ and $\mathfrak b$ are non-empty $\mathscr A(H)$-words of length $m$ and $n$ resp., say $\mathfrak a = a_1 \ast \cdots \ast a_m$ and $\mathfrak b = b_1 \ast \cdots \ast b_n$, such that $\pi_H(\mathfrak a) = \pi_H(\mathfrak b)$ and
$a_{\sigma(1)} \simeq_H b_1$, \ldots, $a_{\sigma(m)} \simeq_H b_m$ for some injection $\sigma: \llb 1, m \rrb \to \llb 1, n \rrb$. 
It is easily found that $\preceq_H$ is a preorder on $\mathscr F(\mathscr A(H))$, see \cite[Proposition 4.2(i)]{An-Tr18}. This leads to the following:

\begin{definition}
Given $x \in H$, we let a \emph{$\preceq_H$-minimal factorization} of $x$ be a $\preceq_H$-minimal $\mathscr A(H)$-word $\mathfrak a$ such that $x = \pi_H(\mathfrak a)$. Accordingly, we take the \emph{set of $\preceq_H$-minimal factorizations} of $x$ to be the set
\[
\mathcal{Z}_{H}^{\sf m}(x) := \left\{ \mathfrak{a}\in \mathcal Z_H(x): \mathfrak a \textrm{ is $\preceq_H$-minimal} \right\};
\]
and then we refer to the sets
\[
\mathsf{L}_{H}^{\sf m}(x) := \left\{ \|\mathfrak{a}\|_{\mathscr A(H)}: \mathfrak{a} \in \mathcal{Z}_{H}^{\sf m}(x) \right\} \subseteq \mathsf L_H(x) 
\quad\text{and}\quad
\mathsf{Z}_{H}^{\sf m}(x):= \bigl\{ \mathcal Z_H^{\sf m}(x) \cap \llb \mathfrak{a} \rrb_{\mathscr{C}_H}: \mathfrak{a} \in \mathcal{Z}_H(x) \bigr\},
\]
resp., as the \emph{set of $\preceq_H$-minimal factorizations} and the \emph{set of $\preceq_H$-minimal factorization classes} of $x$.

Accordingly, we say that the monoid $H$ is 
\begin{itemize}
\item \emph{\textup{BmF}} (resp., \emph{\textup{FmF}}) if $H$ is atomic and $\mathsf{L}_{H}^{\sf m}(x)$ (resp., $\mathsf Z_H^{\sf m}(x)$) is finite for each $x\in H$;
\item \textup{HmF} (resp., \emph{minimally factorial}) if $|\mathsf L_H^{\sf m}(x)| = 1$ (resp., $|\mathsf{Z}_H^{\sf m}(x)| = 1$) for every $x \in H \setminus H^\times$.
\end{itemize}
\end{definition}
Propositions \ref{prop:atomic-iff-min-atomic} and \ref{prop:divisor-closed-sub} below will help to clarify some of the notions we have introduced so far; in particular, the latter generalizes \cite[Proposition 1.2.11.1]{GeHK06} in showing that the arithmetic of a monoid is controlled, to some extent, by the arithmetic of its one-generated divisor-closed submonoids. We start with a lemma that will often come in handy later on (see \S{ }\ref{subsec:fund-defs} for terminology).

\begin{lemma}
\label{lem:2.4}
Let $H$ be a monoid, and let $a \in \mathscr A(H)$ and $x, y \in H$. The following hold:
\begin{enumerate}[label=\textup{(\roman{*})}]
\item\label{lem:2.4(i)} $uav \in \mathcal A(H)$ for all $u, v \in H^\times$.
\item\label{lem:2.4(ii)} If $\mathscr A(H)$ is non-empty or $H$ is commutative or unit-cancellative, then $xy$ is a unit if and only if $x$ and $y$ are both units.
\end{enumerate} 
\end{lemma}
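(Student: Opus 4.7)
For part (i), I would verify the two defining conditions of an atom separately. First, $uav$ is not a unit: if it were, then $a = u^{-1}(uav)v^{-1}$ would be a product of three units and hence a unit, contradicting $a \in \mathscr A(H)$. Second, suppose $uav = bc$ for some $b, c \in H$; then $a = (u^{-1}b)(cv^{-1})$, and since $a$ is an atom one of $u^{-1}b$ or $cv^{-1}$ is a unit, whence (using $u, v \in H^\times$) one of $b$, $c$ is a unit. Thus $uav$ cannot be written as a product of two non-units.

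For part (ii), the ``if'' direction is immediate. For the converse, assume $xy \in H^\times$ and set $u := xy$; writing $p := y u^{-1}$ and $q := u^{-1} x$, we get $xp = 1_H$ and $qy = 1_H$, so $x$ has a right inverse and $y$ has a left inverse. The standard monoid fact that an element possessing both a left inverse and a right inverse is a unit (with the two inverses then coinciding) reduces everything to showing that \emph{$x$ has a left inverse} under each of the three hypotheses, for once $x$ is a unit the identity $y = x^{-1}(xy)$ exhibits $y$ as a product of units.

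I would then dispatch the three alternatives as follows. If $H$ is commutative, then $px = yu^{-1}x = xy u^{-1} = 1_H$, so $p$ is already a two-sided inverse of $x$. If $H$ is unit-cancellative, then $x(px) = (xp)x = x = x \cdot 1_H$ forces $px \in H^\times$ by unit-cancellativity; letting $s := (px)^{-1}$, the equality $(sp)x = s(px) = 1_H$ exhibits $sp$ as a left inverse of $x$. If $\mathscr A(H)$ is non-empty, pick $a \in \mathscr A(H)$ and rewrite
\[
a \;=\; a(xy)u^{-1} \;=\; (ax)\,p.
\]
The atom property forces $ax \in H^\times$ or $p \in H^\times$; in the first case, $s := (ax)^{-1}$ gives $(sa)x = s(ax) = 1_H$, so $sa$ is a left inverse of $x$; in the second case, $x = (xp)p^{-1} = p^{-1} \in H^\times$ directly.

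The only mildly delicate point is the atom case in (ii): the trick is to rewrite $a = (ax)p$ and invoke atomicity of $a$, after which everything collapses into the same left-inverse argument used in the unit-cancellative case. The rest is routine monoid bookkeeping.
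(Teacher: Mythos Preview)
Your argument is correct. The paper itself does not give a full proof here: it handles the commutative case of (ii) in one line and otherwise refers to \cite[Lemma~2.2(i)--(ii) and Proposition~2.30]{Fa-Tr18}. Your treatment is a self-contained version of exactly the kind of reasoning those references encapsulate.

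One small comment on presentation: in the unit-cancellative subcase you invoke $x(px) = x$ to conclude $px \in H^\times$, which is precisely the contrapositive of the defining condition ``$xy \ne x$ for $y \notin H^\times$''; and in the atom subcase the rewrite $a = (ax)p$ is the key maneuver, after which the argument is symmetric with the unit-cancellative one. These are clean and efficient; nothing is missing.
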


\begin{proof}
It is obvious that, if $H$ is commutative and $xy$ is a unit for some $x, y \in H$, then $x, y \in H^\times$. For the rest, see \cite[Lemma 2.2, parts (i) and (ii); and Proposition 2.30]{Fa-Tr18}.
\end{proof}

\begin{proposition}\label{prop:atomic-iff-min-atomic}
	Let $H$ be a monoid. The following are equivalent:
	\begin{enumerate}[label=\textup{(\alph{*})}]
	\item \label{it:prop:atomic-iff-min-atomic(a)} $H$ is atomic.
	\item \label{it:prop:atomic-iff-min-atomic(b)}
	$H \setminus H^\times = \mathrm{Sgp} \langle \mathscr A(H) \rangle_H$.
	\item \label{it:prop:atomic-iff-min-atomic(c)}
	Every non-unit of $H$ has at least one $\preceq_H$-minimal factorization.
	\end{enumerate}
\end{proposition}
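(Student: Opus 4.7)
The plan is to unpack the definitions and verify each implication directly, without invoking any deeper machinery: I would establish (a) $\Leftrightarrow$ (b) first and then (a) $\Leftrightarrow$ (c), the key observation for the latter being that a shortest factorization of a non-unit is automatically $\preceq_H$-minimal.

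For (a) $\Leftrightarrow$ (b), the inclusion $\mathrm{Sgp}\langle \mathscr A(H)\rangle_H \subseteq H \setminus H^\times$ holds unconditionally: if $\mathscr A(H) = \varnothing$ the left-hand side is empty, and otherwise, given any product $a_1 \cdots a_n$ of atoms with $n \ge 1$, either $n = 1$ and $a_1 \notin H^\times$ by definition, or $n \ge 2$ and Lemma \ref{lem:2.4}\ref{lem:2.4(ii)} (whose hypothesis is met because $\mathscr A(H) \ne \varnothing$) shows that $a_1 \cdots a_n = (a_1 \cdots a_{n-1}) \, a_n$ cannot lie in $H^\times$, since otherwise the atom $a_n$ would have to be a unit. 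The reverse inclusion $H \setminus H^\times \subseteq \mathrm{Sgp}\langle \mathscr A(H)\rangle_H$ is literally the assertion that every non-unit is a finite product of atoms, which is exactly the atomicity condition of Definition \ref{def:2.2}. So (a) and (b) are equivalent.

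For (a) $\Leftrightarrow$ (c), the implication (c) $\Rightarrow$ (a) is immediate from the inclusion $\mathcal Z_H^{\sf m}(x) \subseteq \mathcal Z_H(x)$ built into the very definition of $\mathcal Z_H^{\sf m}(x)$. For (a) $\Rightarrow$ (c), I would fix a non-unit $x \in H$; by (a), the set $\mathsf L_H(x) \subseteq \mathbf N$ is non-empty and hence has a smallest element, so I can pick $\mathfrak a \in \mathcal Z_H(x)$ with $\|\mathfrak a\|_{\mathscr A(H)} = \min \mathsf L_H(x)$, noting that $\mathfrak a \ne \varepsilon$ because $x \notin H^\times$. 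I claim $\mathfrak a$ is $\preceq_H$-minimal. Indeed, if $\mathfrak b \preceq_H \mathfrak a$, the definition of $\preceq_H$ forces $\mathfrak b$ to be non-empty and supplies an injection between the index sets of the entries of $\mathfrak b$ and $\mathfrak a$, whence $\|\mathfrak b\|_{\mathscr A(H)} \le \|\mathfrak a\|_{\mathscr A(H)}$; combined with $\pi_H(\mathfrak b) = \pi_H(\mathfrak a) = x$ (so $\mathfrak b \in \mathcal Z_H(x)$) and the length-minimality of $\mathfrak a$, this forces the two lengths to coincide and the injection to be a bijection. Inverting this bijection and using that $\simeq_H$ is a symmetric relation then yields $\mathfrak a \preceq_H \mathfrak b$, hence $\mathfrak b \not\prec \mathfrak a$, as wanted.

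The only point that demands a bit of care is the inversion argument just sketched, where one has to re-index the conditions of the form $\cdot \simeq_H \cdot$ produced by the original injection to match the conditions required by $\preceq_H$ in the opposite direction. Apart from that, everything reduces to spelling out definitions, so I do not anticipate any serious obstacle.
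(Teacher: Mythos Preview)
Your proposal is correct and follows essentially the same approach as the paper. For (a) $\Rightarrow$ (c), the paper simply cites \cite[Propositions 4.2(iii) and 4.6(ii)]{An-Tr18} for the details, whereas you spell out the underlying argument directly (a shortest factorization is automatically $\preceq_H$-minimal, because an injection between equal-length index sets is a bijection and $\simeq_H$ is symmetric); the content is the same, and your version has the minor advantage of being self-contained.
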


\begin{proof}
	The implications \ref{it:prop:atomic-iff-min-atomic(b)} $\Rightarrow$ \ref{it:prop:atomic-iff-min-atomic(a)} and \ref{it:prop:atomic-iff-min-atomic(c)} $\Rightarrow$ \ref{it:prop:atomic-iff-min-atomic(a)} are obvious.
	
	\vskip 0.1cm
	\ref{it:prop:atomic-iff-min-atomic(a)} $\Rightarrow$ \ref{it:prop:atomic-iff-min-atomic(b)} We can assume that $H$ is not a group (i.e., $H \ne H^\times$), or else $\mathscr A(H)$ is empty and the claims are trivial. Since $H$ is atomic, we have (by definition) that 
	\[
	\emptyset \ne H \setminus H^\times \subseteq \mathrm{Sgp}\langle \mathscr A(H) \rangle_H.
	\]
	Then $\mathscr A(H)$ is non-empty, and this suffices, by Lemma \ref{lem:2.4}\ref{lem:2.4(i)}, to ensure that $\mathrm{Sgp}\langle \mathscr A(H) \rangle_H$ is contained in $H \setminus H^\times$. Putting everything together, we thus find that
	$
	H \setminus H^\times = \mathrm{Sgp}\langle \mathscr A(H) \rangle_H$.
	
	\vskip 0.1cm
	\ref{it:prop:atomic-iff-min-atomic(a)} $\Rightarrow$ \ref{it:prop:atomic-iff-min-atomic(c)} It is enough to consider that $\mathfrak a \preceq_H \mathfrak b$ and $\mathfrak b \preceq_H \mathfrak a$, for some $\mathscr A(H)$-words $\mathfrak a$ and $\mathfrak b$, if and only if $(\mathfrak a, \mathfrak b) \in \mathscr C_H$, see \cite[Propositions 4.2(iii) and 4.6(ii)]{An-Tr18} for further details.
\end{proof}

\begin{proposition}
\label{prop:divisor-closed-sub}
	Let $H$ be a monoid and $M$ a divisor-closed submonoid of $H$. The following hold: 
	\begin{enumerate}[label={\rm (\roman{*})}]
		\item\label{it:prop:divisor-closed-sub(i)}  $M^\times = H^\times$, $\mathscr{A}(M) = \mathscr{A}(H) \cap M$, 
		${\sf L}_M(x) = {\sf L}_H(x)$, $\mathcal{Z}_M(x) = \mathcal{Z}_H(x)$, and $\mathsf{Z}_M(x) = \mathsf{Z}_H(x)$.
		\item\label{it:prop:divisor-closed-sub(ii)} 
		${\sf L}_M^{\sf m}(x) = {\sf L}_H^{\sf m}(x)$, $\mathcal{Z}_M^{\sf m}(x) = \mathcal{Z}_H^{\sf m}(x)$, and $\mathsf{Z}_M^{\sf m}(x) = \mathsf{Z}_H^{\sf m}(x)$.
		\item\label{it:prop:divisor-closed-sub(iii)} $H$ is atomic \textup{(}resp., \textup{BF}, \textup{BmF}, \textup{FF}, \textup{FmF}, \textup{HF}, \textup{HmF}, factorial, or minimally factorial\textup{)} if and only if so is $\llangle x \rrangle_H$ for every $x \in H$.
	\end{enumerate}
\end{proposition}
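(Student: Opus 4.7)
The plan is to establish (i) and (ii) by a sequence of elementary reductions, from which (iii) follows by a routine transfer argument. For (i), I would first note that $1_H \in M$ and every $u \in H^\times$ divides $1_H$ in $H$ (since $1_H = u^{-1} u \in HuH$); divisor-closedness of $M$ then forces $H^\times \subseteq M$, and together with the trivial inclusion $M^\times \subseteq H^\times$ this gives $M^\times = H^\times$. Using this, I would derive $\mathscr{A}(M) = \mathscr{A}(H) \cap M$: if $a \in \mathscr{A}(H) \cap M$ and $a = xy$ with $x, y \in M \setminus M^\times$, then $x, y \in H \setminus H^\times$, contradicting $a \in \mathscr{A}(H)$; conversely, if $a \in \mathscr{A}(M)$ is written as $xy$ in $H$ with $x, y \in H \setminus H^\times$, then both $x$ and $y$ divide $a \in M$, so by divisor-closedness they lie in $M \setminus M^\times$, a contradiction.

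The remaining equalities in (i) and all of (ii) rest on the key observation that every factorization in $H$ of an element $x \in M$ is actually a word over $\mathscr{A}(M)$: if $\mathfrak{a} = a_1 \ast \cdots \ast a_n \in \mathscr{F}(\mathscr{A}(H))$ and $\pi_H(\mathfrak{a}) = x$, then each $a_i$ divides $x$ in $H$, hence lies in $M \cap \mathscr{A}(H) = \mathscr{A}(M)$. Since the length function is intrinsic to the free-monoid structure, this immediately yields $\mathcal{Z}_M(x) = \mathcal{Z}_H(x)$ and $\mathsf{L}_M(x) = \mathsf{L}_H(x)$ for every $x \in M$. To complete (i) and (ii), I would verify that $\mathscr{C}_M$ and $\preceq_M$ coincide with the restrictions of $\mathscr{C}_H$ and $\preceq_H$ to $\mathscr{F}(\mathscr{A}(M))$: the equality $\pi_H|_{\mathscr{F}(\mathscr{A}(M))} = \pi_M$ is forced by the universal property of free monoids, while $\simeq_M$ agrees with $\simeq_H$ on $M$ because $M^\times = H^\times$ and $H^\times b H^\times \subseteq M$ whenever $b \in M$. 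Unwinding the definitions of $\mathscr{C}$, $\preceq$, and their $\sf m$-variants then delivers the remaining equalities.

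For (iii), the plan is to treat (i) and (ii) as a transfer principle. Each of the listed properties is phrased as an elementwise condition, either on all non-units (atomic, HF, HmF, factorial, minimally factorial) or on all elements (BF, BmF, FF, FmF). For the ``only if'' direction, given $x \in H$ and a non-unit $y$ of $\llangle x \rrangle_H$, the identity $\llangle x \rrangle_H^\times = H^\times$ makes $y$ a non-unit of $H$, and (i)/(ii) identify all the relevant invariants attached to $y$ in the two monoids, so the hypothesis on $H$ descends to $\llangle x \rrangle_H$. The ``if'' direction is symmetric: given a non-unit $y \in H$, apply the hypothesis inside $\llangle y \rrangle_H$ and transfer back via (i)/(ii). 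I expect no serious obstacle here --- the real work is bookkeeping to ensure that each definition in \S{ }\ref{sec:preliminaries} (atoms, $\pi$, $\simeq$, $\mathscr{C}$, $\preceq$, length) interacts correctly with the restriction from $H$ to $M$. The guiding principle is that in a divisor-closed submonoid, all arithmetic data attached to elements of $M$ are inherited unchanged from $H$.
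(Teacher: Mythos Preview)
Your proposal is correct and follows the natural route. The paper itself does not give a self-contained argument: it cites \cite[Proposition 2.21(ii)]{Fa-Tr18} for part (i) and then declares (ii) and (iii) ``immediate from part (i)''. Your plan spells out precisely the content behind that citation (the equalities $M^\times = H^\times$ and $\mathscr A(M) = \mathscr A(H) \cap M$ via divisor-closedness, and the identification of $\mathcal Z$, $\mathsf L$, $\mathsf Z$), and is in fact more careful than the paper about (ii), where you rightly note that one must check $\preceq_M$ and $\mathscr C_M$ agree with the restrictions of $\preceq_H$ and $\mathscr C_H$ --- a step the paper sweeps under ``immediate''. One small point worth tightening: from $H^\times \subseteq M$ you should also note that $u^{-1} \in H^\times \subseteq M$ to conclude $u \in M^\times$, not merely $u \in M$; otherwise the argument for $M^\times = H^\times$ is complete and your transfer principle for (iii) is exactly right.
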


\begin{proof}
	Parts \ref{it:prop:divisor-closed-sub(ii)} and  \ref{it:prop:divisor-closed-sub(iii)} are immediate from part \ref{it:prop:divisor-closed-sub(i)}, since we have already observed 
	that $\llangle X \rrangle_H$ is a divisor-closed sub\-monoid of $H$ for every $X \subseteq H$. As for part \ref{it:prop:divisor-closed-sub(i)}, see \cite[Proposition 2.21(ii)]{Fa-Tr18}.
\end{proof}
By and large, the present paper is about sufficient or necessary conditions for a monoid to be atomic, FF, BmF, etc. The literature abounds with results in this direction (with atoms often replaced by other ``elementary factors''), but --- apart from few exceptions in ``nearly cancellative'' settings --- these results are mostly about \emph{commutative} structures, see, e.g., \cite[Theorems 2.3, 2.4, and 3.1]{Co63}, \cite[Theorems 4 and 7]{Fl69}, \cite[Th\'eor\`eme de Structure]{Bouv74b}, \cite[Theorems 3.9, 3.11, 3.13, 4.4, and 4.9]{AnVL96}, \cite[Theorems 3.3, 3.4, and 3.6]{ChAnVLe11}, \cite[Proposition 3.1]{Sm13}, and \cite[parts (i) and (iv) of Theorem 2.28, and Corollary 2.29]{Fa-Tr18}.

\subsection{Presentations.} 
\label{sec:presentations}
In \S{ }\ref{sec:acyclic-monoids}, we will consider a couple of monoids defined via generators and relations in order to illustrate obstructions to a non-commutative analogue of fundamental results that hold true in the commutative setting. 
To this end, it is useful to recall a few basic facts about presentations, cf. \cite[\S{ }1.5]{Ho95}.

Let $X$ be a set and $R$ a binary relation on $\mathscr F(X)$. We denote by $R^\sharp$ the smallest monoid congruence on $\mathscr F(X)$ containing $R$. Formally, this means that
\[
R^\sharp := \bigcap \{\rho \subseteq \mathscr F(X) \times \mathscr F(X): \rho \text{ is a monoid congruence and } R \subseteq \rho\}.
\]
Accordingly, we have that $\mathfrak u \equiv \mathfrak v \bmod R^\sharp$ if and only if there are $\mathfrak z_0, \ldots, \mathfrak z_\ell \in \mathscr F(X)$, with $\mathfrak z_0 = \mathfrak u$ and $\mathfrak z_\ell = \mathfrak v$, such that for each $i \in \llb 0, \ell-1 \rrb$ there exist $\mathfrak p_i, \mathfrak q_i, \mathfrak q_i^\prime, \mathfrak r_i \in \mathscr F(X)$ with the property that 
\[
\mathfrak z_i = \mathfrak p_i \ast \mathfrak q_i \ast \mathfrak r_i, 
\quad
\mathfrak z_{i+1} = \mathfrak p_i \ast \mathfrak q_i^\prime \ast \mathfrak r_i,
\quad\text{and}\quad
\mathfrak q_i = \mathfrak q_i^\prime, \text{ or } \mathfrak q \RR \mathfrak q^\prime, \text{ or }\mathfrak q^\prime \RR \mathfrak q.
\]
In such a case, we call the finite sequence $\mathfrak z_0, \ldots, \mathfrak z_\ell$ an \emph{$R$-chain of length $\ell$ from $\mathfrak u$ to $\mathfrak v$} (notice that $\ell$ can also be zero, and if $\ell$ is actually zero then $\mathfrak u$ is necessarily equal to $\mathfrak v$).

With the above in place, we denote by $\langle X \mid R \rangle$ the monoid  obtained by taking the quotient of $\mathscr F(X)$ by the congruence $R^\sharp$.
We write $\langle X \mid R \rangle$ multiplicatively and call it a \emph{monoid presentation} (or simply a \emph{presentation}), with the elements of $X$ dubbed as the \emph{generators} 
and the pairs $(\mathfrak q, \mathfrak q^\prime) \in R$ as the \emph{defining relations} of the presentation. In particular, we 																																																																																																					refer to $\langle X \mid R \rangle$ as a \emph{finite presentation} if $X$ and $R$ are both finite. As is customary, we will usually identify an $X$-word $\mathfrak z$ with its equivalence class in $\langle X \mid R \rangle$ when there is no risk of confusion.

We take the \emph{left graph} of a monoid presentation $\langle X \mid R\rangle$ to be the undirected graph with vertex set $X$ and an edge from $y$ to $z$ for each pair $(y \ast \mathfrak y, z \ast \mathfrak z) \in R$ with $y, z \in X$ and
$\mathfrak y, \mathfrak z \in \mathscr F(X)$; note that this results in a loop when $y = z$, and in multiple (or parallel) edges between $y$ and $z$ if there are two or more defining relations of the form $(y \ast \mathfrak y, z \ast \mathfrak z)$.
The \emph{right graph} of a presentation is defined analogously, using the
right-most (instead of left-most) letters of each word from a defining relation. 

A monoid is \emph{Adian} if it is isomorphic to a finite presentation 
whose left and right graphs are \emph{cycle-free}, that is, do not contain cycles (including loops). Our interest for Adian monoids stems from the following result, commonly referred to as Adian's embedding theorem since first proved in \cite[Theorem II.4]{Ad66}.

\begin{theorem}
	\label{th:adian-theorem}
	Every Adian monoid embeds into a group
	\textup{(}and hence is cancellative\textup{)}.
\end{theorem}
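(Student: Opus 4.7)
The plan is to exhibit an injective monoid homomorphism from the Adian monoid $M = \langle X \mid R\rangle$ into the group $G$ presented by the same data (i.e., the quotient of the free group on $X$ by the normal closure of $\{\mathfrak q (\mathfrak q')^{-1} : (\mathfrak q, \mathfrak q') \in R\}$). By universality of monoid presentations there is a canonical map $\iota : M \to G$, and once $\iota$ is shown to be injective, cancellativity of $M$ follows at once, since any subset of a group is cancellative. Thus the whole content of the theorem reduces to the injectivity of $\iota$.

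To prove injectivity, I would use the two cycle-free hypotheses to build a canonical system of representatives for the elements of $M$ that survives the passage to $G$. Acyclicity of the left graph of $\langle X \mid R\rangle$ admits a linear extension, and hence produces a total order $<_L$ on $X$; symmetrically, the right graph yields a total order $<_R$. Each defining relation $(\mathfrak q, \mathfrak q') \in R$ can then be oriented as a rewrite rule in the unique direction that decreases a well-founded measure on $\mathscr F(X)$ built from length together with the $<_L$- and $<_R$-positions of leading and trailing letters. Cycle-freeness guarantees that the orientation is coherent and that the resulting rewriting system is terminating.

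The next step is to verify that this rewriting system is confluent modulo $R^\sharp$: whenever two defining rewrites apply to overlapping subwords of a common $X$-word, the divergent reducts can be reconverged. Here both graph conditions enter: the left graph controls the critical pairs arising at the left-most letters of relations, and the right graph does the symmetric job at the right-most ones. Termination plus confluence yield a unique $R^\sharp$-normal form for every $X$-word, hence a canonical cross-section of the projection $\mathscr F(X) \to M$. Extending the reduction to signed words over $X$ that are allowed to contain formal inverses, one sees that the only additional simplifications introduced in $G$ are the free-group cancellations $x x^{-1} \to \varepsilon$; the acyclic structure is precisely what prevents these from identifying distinct $R^\sharp$-normal forms coming from $M$, so $\iota$ is injective.

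The main obstacle is the confluence analysis. Relations in $R$ may be long, and their overlaps --- especially the inclusion overlaps where one relation's left-hand side sits as a proper subword of another's --- are combinatorially subtle. The sole leverage available for resolving them is the rigidity imposed by the two acyclic graphs; translating ``no cycle in the left graph'' into ``no irreducible critical pair at the left boundary'' (and the analogous statement on the right) is the combinatorial heart of Adian's original argument, and any honest proof must grapple with this delicate case analysis. For this reason it seems wiser, as the author does, to invoke the embedding theorem as a black box rather than reconstructing its proof in the present paper.
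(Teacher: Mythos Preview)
The paper does not prove this statement at all: it is quoted from the literature, with a pointer to Adian's original 1966 paper \cite{Ad66}, and is used only as a black box in Examples~\ref{exa:non-atomic-2-generator-1-relator-canc-mon} and~\ref{exa:5.4}. You correctly recognize this in your final paragraph, so in that narrow sense your conclusion matches the paper's approach.

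That said, the sketch you offer beforehand is not a proof and has real gaps. First, the left and right graphs of a presentation are \emph{undirected}; ``cycle-free'' here means they are forests, and a forest carries no canonical orientation, so there is no evident way to extract a total order $<_L$ (or $<_R$) on $X$ by ``linear extension'' as you suggest. Second, even granting some orientation of the defining relations, Adian presentations do \emph{not} in general yield terminating confluent rewriting systems on $\mathscr F(X)$; the cycle-freeness of the two graphs constrains only the first and last letters of each side of a relation and says nothing about interior overlaps, so the claimed resolution of critical pairs does not follow. Third, the passage to the group is precisely where the difficulty lies: asserting that ``the only additional simplifications introduced in $G$ are the free-group cancellations $xx^{-1}\to\varepsilon$'' and that these cannot collapse distinct monoid normal forms is essentially a restatement of the theorem, not an argument for it. Adian's actual proof (and later geometric proofs by Remmers and others) proceeds by a delicate inductive analysis of derivation sequences that your outline does not capture. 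So your instinct to cite rather than reprove is the right one; the sketch preceding it should be discarded.
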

We shall use Theorem \ref{th:adian-theorem} in Examples \ref{exa:non-atomic-2-generator-1-relator-canc-mon} and \ref{exa:5.4} to show that certain presentations are cancellative. But a more ``hands-on approach'' will be necessary to prove the cancellativity of another presentation (with four generators and countably infinite many defining relations) we consider in Example \ref{exa:atomic-2-gen-1-rel-canc-monoid}. 

\section{Finitely generated monoids et similia}
\label{sec:lfgu-monoids}
Commutative monoids that are finitely generated after modding out their group of units, play a central role in the classical theory of factorization as developed in \cite{GeHK06}. In the present section, we consider a non-commutative generalization of such structures and show how this results in a class of monoids with remarkable arithmetic properties.

\begin{definition}\label{def:fg-monoids-and-the-like}
	Let $H$ be a monoid. We say that $H$ is
	\begin{itemize}
		\item \emph{finitely generated} (shortly, f.g.) if $H = \{1_H\} \cup \mathrm{Sgp}\langle A \rangle_H$ for some finite set $A \subseteq H$;
		\item \emph{locally finitely generated} (shortly, l.f.g.) if $\llangle x \rrangle_H$ is f.g. for every $x \in H$;
		\item \emph{finitely generated up to units} (shortly, f.g.u.) if $H \setminus H^\times \subseteq \mathrm{Sgp}\langle H^\times A H^\times \rangle_H$ for a finite $A \subseteq H$;
		\item \emph{locally finitely generated up to units} (shortly, l.f.g.u.) if $\llangle x \rrangle_H$ is f.g.u. for every $x \in H$.
	\end{itemize}
\end{definition}
It is straightforward that a commutative monoid $H$ is f.g.u. (resp., l.f.g.u.) if and only if the quotient $H / H^\times$ is f.g. (resp., l.f.g.), and it is obvious that f.g. (resp., l.f.g.) monoids are f.g.u. (resp., l.f.g.u.). 

More interestingly, cancellative l.f.g.u. commutative monoids are \textup{FF} by \cite[Proposition 2.7.8.4]{GeHK06}, and unit-cancellative l.f.g.u. commutative monoids are BF by \cite[Proposition 3.4]{FGKT}. 
All together, some of the main contributions of this paper (viz., Theorem \ref{th:lfgu-BF-is-FF}\ref{it:th:lfgu-BF-is-FF(i)} and Corollaries \ref{cor:lfgu-is-FmF}, \ref{cor:lfgu-acyclic-mons-are-atomic}, and \ref{cor:comm-unit-canc-lfgu-mons-are-FF}) will provide an overarching generalization of these results to a non-commutative setting. 

We start with a couple of technical lemmas, the first of which is perhaps of independent interest.

\begin{lemma}\label{lem:minimal-generating-set}
Let $H$ be a monoid, and let $A, Q \subseteq H$ such that $Q^2 \subseteq Q$. The following hold:
	\begin{enumerate}[label=\textup{(\roman{*})}]
		\item\label{lem:minimal-generating-set(i)}
		If $A$ is finite, then there exists a subset $\bar{A}$ of $A$ with the property that $\mathrm{Sgp} \langle QAQ \rangle_H = \mathrm{Sgp} \langle Q \bar{A} Q \rangle_H$ and $a \notin \mathrm{Sgp}\langle Q\bar{A}Q \setminus QaQ \rangle_H$ for every $a \in \bar{A}$.
		\item\label{lem:minimal-generating-set(ii)} 
		If $1_H \in Q$, then $\mathrm{Sgp}\langle QAQ \rangle_H \subseteq Q \cup \mathrm{Sgp}\langle Q (A \setminus Q) Q \rangle_H$.
	\end{enumerate}
\end{lemma}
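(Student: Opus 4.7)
My approach to part (i) is by extremality. Since $A$ is finite, I can pick a subset $\bar{A} \subseteq A$ of \emph{minimum} cardinality among those satisfying $\mathrm{Sgp}\langle QAQ \rangle_H = \mathrm{Sgp}\langle Q\bar{A}Q \rangle_H$ (note $A$ itself is such a subset, so the collection is non-empty). The nontrivial point is to verify that then $a \notin \mathrm{Sgp}\langle Q\bar{A}Q \setminus QaQ \rangle_H$ for every $a \in \bar{A}$. The key technical observation I would use is the set-theoretic inclusion $Q\bar{A}Q \setminus QaQ \subseteq Q(\bar{A}\setminus\{a\})Q$: indeed, any element $qbq'$ with $b \in \bar{A}$ that avoids $QaQ$ must have $b \ne a$, since otherwise $qbq' = qaq' \in QaQ$.

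Now suppose toward a contradiction that some $a \in \bar{A}$ lies in $\mathrm{Sgp}\langle Q\bar{A}Q \setminus QaQ \rangle_H$, which by the inclusion above is contained in $\mathrm{Sgp}\langle Q(\bar{A}\setminus\{a\})Q \rangle_H$. Writing $a = c_1 \cdots c_m$ with each $c_i = q_i' b_i q_i''$ where $b_i \in \bar{A}\setminus\{a\}$ and $q_i', q_i'' \in Q$, and using $Q^2 \subseteq Q$ to absorb an arbitrary prefactor $q_1 \in Q$ into $q_1'$ and an arbitrary postfactor $q_2 \in Q$ into $q_m''$, I obtain that every $q_1 a q_2 \in QaQ$ already belongs to $\mathrm{Sgp}\langle Q(\bar{A}\setminus\{a\})Q \rangle_H$. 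Consequently $\mathrm{Sgp}\langle Q\bar{A}Q \rangle_H = \mathrm{Sgp}\langle Q(\bar{A}\setminus\{a\})Q \rangle_H$, contradicting the minimality of $|\bar{A}|$.

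For part (ii), the plan is a direct rewriting in the free monoid over $A \cup Q$. Fix $x \in \mathrm{Sgp}\langle QAQ \rangle_H$ and express it as $x = q_1 a_1 q_1' \cdot q_2 a_2 q_2' \cdots q_n a_n q_n'$ with $q_i, q_i' \in Q$ and $a_i \in A$. Merging each consecutive pair $q_i' q_{i+1} \in Q$ (possible by $Q^2 \subseteq Q$) rewrites this as an alternating product $\tilde{q}_0 a_1 \tilde{q}_1 a_2 \cdots \tilde{q}_{n-1} a_n \tilde{q}_n$ with $\tilde{q}_i \in Q$. I then collapse every maximal contiguous block whose letters $a_l$ happen to lie in $Q$ into a single element of $Q$, again via $Q^2 \subseteq Q$. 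If no $a_l$ escapes $Q$, the whole product sits in $Q$; otherwise, I am left with $r_0 a_{i_1} r_1 a_{i_2} r_2 \cdots a_{i_k} r_k$ where $a_{i_j} \in A \setminus Q$ and $r_j \in Q$. Using $1_H \in Q$, this regroups as $(r_0 a_{i_1} r_1)(1_H a_{i_2} r_2) \cdots (1_H a_{i_k} r_k)$, a product of $k$ elements of $Q(A\setminus Q)Q$, placing $x$ in $\mathrm{Sgp}\langle Q(A\setminus Q)Q \rangle_H$.

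The main obstacle is in part (i), namely spotting the inclusion $Q\bar{A}Q \setminus QaQ \subseteq Q(\bar{A}\setminus\{a\})Q$ that reduces the problem to the more tractable statement about $Q(\bar{A}\setminus\{a\})Q$; the rest of part (i) and all of part (ii) are routine once one internalises the absorption idiom supplied by the hypothesis $Q^2 \subseteq Q$.
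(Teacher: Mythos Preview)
Your proof is correct and follows essentially the same approach as the paper. In part (i) the paper argues by induction on $|A|$ rather than by extremality, but the content is identical: both hinge on the inclusion $Q\bar{A}Q \setminus QaQ \subseteq Q(\bar{A}\setminus\{a\})Q$ and the absorption $Q^2 \subseteq Q$ to show that dropping $a$ does not shrink the generated subsemigroup; in part (ii) the paper removes one $a_i \in Q$ at a time and inducts on $n$, whereas you collapse all $Q$-blocks at once, but again the mechanism is the same.
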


\begin{proof}
	\ref{lem:minimal-generating-set(i)} Assume that $\kappa := |A| < \infty$. If $\kappa = 0$ or $a \notin \mathrm{Sgp}\langle QAQ \setminus QaQ \rangle_H$ for every $a \in A$, the conclusion is trivial (in both cases, just take $\bar A := A$). Otherwise, let $a \in A$ such that 
	\begin{equation}\label{equ:pick-one-out}
	a \in \mathrm{Sgp}\langle QAQ \setminus QaQ \rangle_H,
	\end{equation}
	and define $A^\prime := A \setminus \{a\}$. 
	We claim that 
	$$
	\mathrm{Sgp}\langle QAQ \rangle_H \subseteq \mathrm{Sgp}\langle QA^\prime Q \rangle_H.
	$$
	Since $|A^\prime| < \kappa$ and $\mathrm{Sgp}\langle QA^\prime Q \rangle_H \subseteq \mathrm{Sgp} \langle QAQ \rangle_H$, this will finish the proof (by induction on $\kappa$).
	
    To demonstrate the claim, let $x \in \mathrm{Sgp} \langle QAQ \rangle_H$. By definition, this means that
	\begin{equation}\label{equ:QAQ-factorization}
	x = u_1 a_1 v_1 \cdots u_n a_n v_n, \quad
	\text{for some }
	u_1, \ldots, u_n, v_1, \ldots, v_n \in Q
	\text{ and }
	a_1, \ldots, a_n \in A.
	\end{equation}
	Set $I := \{i \in \llb 1, n \rrb: a_i = a\}$. We have to show that $x \in \mathrm{Sgp}\langle QA^\prime Q \rangle_H$. So, fix $i \in \llb 1, n \rrb$. 
	
	By \eqref{equ:QAQ-factorization}, it is sufficient to prove that $u_i a_i v_i \in \mathrm{Sgp}\langle QA^\prime Q \rangle_H$. If $i \notin I$, then $a_i \in A^\prime$ and we are done. If not, $a_i = a$ and, by \eqref{equ:pick-one-out}, 
	there are $\bar{u}_1, \ldots, \bar{u}_m, \bar{v}_1, \ldots, \bar{v}_m \in Q$ and $\bar{a}_1, \ldots, \bar{a}_m \in A^\prime$ such that
	$$
	a_i = \bar{u}_1 \bar{a}_1 \bar{v}_1 \cdots \bar{u}_m \bar{a}_m \bar{v}_m.
	$$
	It follows that $u_i a_i v_i = \hat{u}_1 \bar{a}_1 \hat{v}_1 \cdots \hat{u}_m \bar{a}_m \hat{v}_m$, where
	$$
	\hat{u}_j := 
	\left\{
	\begin{array}{ll}
	\! u_i \bar{u}_1  & \text{if }j = 1  \\ 
	\! \bar{u}_j  & 1 < j \le m
	\end{array}
	\right.
	\quad\text{and}\quad
	v_j :=
	\left\{
	\begin{array}{ll}
	\! \bar{v}_j  & \text{if }1 \le j < m  \\ 
	\! \bar{v}_m v_i  & j = m
	\end{array}
	\right.;
	$$
	and since $Q^2 \subseteq Q$ (by hypothesis), $\hat{u}_j, \hat{v}_j \in Q$ for all $j \in \llb 1, m \rrb$. Then $u_i a_i v_i \in \mathrm{Sgp} \langle QA^\prime Q \rangle_H$, as wished.
	
	\vskip 0.1cm
	\ref{lem:minimal-generating-set(ii)} Assume $1_H \in Q$, and suppose that $x = u_1 a_1 v_1 \cdots u_n a_n v_n$ for some $u_1, \ldots, u_n, v_1, \ldots, v_n \in Q$ and $a_1, \ldots, a_n \in A$. We have to prove that $x \in Q \cup \mathrm{Sgp}\langle Q(A \setminus Q)Q \rangle_H$.
	
	If $a_1, \ldots, a_n \in Q$ or $a_1, \ldots, a_n \in A \setminus Q$, then we are done (note that $Q^k \subseteq Q$ for all $k \in \mathbf N^+$, by the fact that $Q^2 \subseteq Q$). Otherwise, $n \ge 2$ and there exists $\hat\imath \in \llb 1, n \rrb$ with $a_{\hat\imath} \in Q$. Accordingly, we can write
	$$
	x = \bar{u}_1 \bar{a}_1 \bar{v}_1 \cdots \bar{u}_{n-1} \bar{a}_{n-1} \bar{v}_{n-1}, 
	$$
	where we take $\bar{a}_i := a_i$ for $1 \le i < \hat\imath$ and $\bar{a}_i := a_{i+1}$ for $\hat\imath \le i \le n-1$, and we define
	$$
	\bar u_i := 
	\left\{
	\begin{array}{ll}
	\! u_i & \text{if } 1 \le i < \hat\imath \\
	\! u_i a_i v_i u_{i+1} & \text{if } i = \hat\imath \ne n \\
	\! u_{i+1} & \text{if }\hat\imath < i \le n-1
	\end{array}
	\right.\!
	\quad\text{and}\quad
	\bar v_i := 
	\left\{
	\begin{array}{ll}
	\! v_i & \text{if } 1 \le i < \hat\imath \le n-1 \\
	\! v_{i+1} & \text{if }\hat\imath \le i \le n-1 \\
	\! v_i u_{i+1} a_{i+1} v_{i+1} & \text{if } i+1 = \hat\imath = n
	\end{array}
	\right.\!.
	$$
	It follows, by induction on $n$, that $x \in H$ (as was desired), since it is clear from the above that $\bar u_i, \bar v_i \in Q$ and $\bar a_i \in A$ for every $i \in \llb 1, n-1 \rrb$.
\end{proof}

\begin{lemma}
\label{lem:atoms-of-atomic-fgu-mon}
	Let $H$ be an f.g.u. monoid. There exists a finite set $\bar{A} \subseteq \mathscr A(H)$ such that $\mathscr A(H) = H^\times \bar{A} H^\times$ and $a \not \simeq_H b$ for all $a, b \in \bar{A}$ with $a \ne b$.
\end{lemma}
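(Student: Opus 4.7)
The plan is to start from an arbitrary finite $A \subseteq H$ witnessing that $H$ is f.g.u., and refine it in two stages to reach the desired $\bar{A}$. The first refinement uses Lemma \ref{lem:minimal-generating-set}\ref{lem:minimal-generating-set(ii)} with $Q := H^\times$ (which contains $1_H$ and satisfies $(H^\times)^2 \subseteq H^\times$ because $H^\times$ is a subgroup of $H$): this gives
\[
H \setminus H^\times \;\subseteq\; \mathrm{Sgp}\langle H^\times A H^\times\rangle_H \;\subseteq\; H^\times \cup \mathrm{Sgp}\langle H^\times (A \setminus H^\times) H^\times\rangle_H,
\]
so, after replacing $A$ by $A \setminus H^\times$, I may assume that every element of $A$ is a non-unit of $H$.

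The key step is then to show that every atom of $H$ is $\simeq_H$-equivalent to some element of $A$. I may assume $\mathscr A(H) \ne \emptyset$, for otherwise $\bar A := \emptyset$ finishes the job. Pick any $a \in \mathscr A(H)$; since $a$ is a non-unit, the f.g.u.\ property gives a representation $a = u_1 a_1 v_1 \cdots u_n a_n v_n$ with $u_i, v_i \in H^\times$, $a_i \in A$, and $n \ge 1$. Because $a_i \notin H^\times$ and $H^\times$ is a subgroup, each $u_i a_i v_i$ is a non-unit. If $n \ge 2$, the grouping $a = (u_1 a_1 v_1)(u_2 a_2 v_2 \cdots u_n a_n v_n)$ would exhibit $a$ as a product of two non-units, contradicting atomicity; here I invoke Lemma \ref{lem:2.4}\ref{lem:2.4(ii)}, which applies precisely because $\mathscr A(H) \ne \emptyset$, to see that the second factor (a product of non-units) is itself a non-unit. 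Hence $n = 1$ and $a \simeq_H a_1 \in A$.

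Finally, I extract representatives. The $\simeq_H$-classes meeting the finite set $A$ are at most $|A|$ in number; from each such class that contains an atom of $H$, I choose one such atom, and I let $\bar A$ be the (finite) set of these chosen atoms. By construction $\bar A \subseteq \mathscr A(H)$, and distinct elements of $\bar A$ come from distinct $\simeq_H$-classes in $A$, so they are pairwise $\simeq_H$-inequivalent. Any atom $a$ is $\simeq_H$-equivalent to some $a_0 \in A$ by the previous step, and since $a$ itself is an atom lying in the $\simeq_H$-class of $a_0$, that class was sampled and contributes some $\bar a \in \bar A$ with $\bar a \simeq_H a_0 \simeq_H a$; this gives $\mathscr A(H) \subseteq H^\times \bar A H^\times$, while the reverse inclusion follows from Lemma \ref{lem:2.4}\ref{lem:2.4(i)}. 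The main subtlety is the appeal to Lemma \ref{lem:2.4}\ref{lem:2.4(ii)} in the middle step: it is exactly the existence of at least one atom that prevents a product of non-units from collapsing to a unit, and without that observation one could not force $n = 1$.
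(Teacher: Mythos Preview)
Your proof is correct and follows essentially the same approach as the paper: reduce to $A \subseteq H \setminus H^\times$ via Lemma~\ref{lem:minimal-generating-set}\ref{lem:minimal-generating-set(ii)}, show via Lemma~\ref{lem:2.4} that every atom is $\simeq_H$-equivalent to an element of $A$ (forcing $n=1$ in the representation), and then pick $\simeq_H$-representatives. Your argument is in fact slightly more streamlined, since the paper additionally routes through Lemma~\ref{lem:minimal-generating-set}\ref{lem:minimal-generating-set(i)} to obtain a ``minimal'' $A''$ before selecting representatives, a step that is not actually needed for the stated conclusion.
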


\begin{proof}
	We may suppose that $\mathscr A(H)$ is non-empty, or else the claim is obvious (just take $\bar{A} := \emptyset$). 
	
	Since $H$ is an f.g.u. monoid, $H \setminus H^\times \subseteq \mathrm{Sgp}\langle H^\times A H^\times \rangle_H$ for some finite $A \subseteq H$; in particular, we can assume, by Lemma \ref{lem:minimal-generating-set}\ref{lem:minimal-generating-set(ii)}, that $A \subseteq H \setminus H^\times$ and, hence, $H^\times A H^\times \subseteq H \setminus H^\times$. It follows that
	\begin{equation}\label{equ:double-containment}
	\emptyset \ne \mathscr A(H) \subseteq H \setminus H^\times = \mathrm{Sgp}\langle H^\times A H^\times \rangle_H,
	\end{equation}
 	because $\mathscr A(H)$ being non-empty guarantees, by Lemma \ref{lem:2.4}\ref{lem:2.4(ii)}, that 
 	\[
 	\mathrm{Sgp}\langle H^\times A H^\times \rangle_H \subseteq \textrm{Sgp} \langle H \setminus H^\times \rangle_H \subseteq H \setminus H^\times.
 	\]
 	Set $A^\prime := A \cap \mathscr A(H)$. We know from Lemma \ref{lem:2.4}\ref{lem:2.4(i)} that
 	\begin{equation*}
 	uav \in \mathscr A(H),
 	\quad\text{for all } u, v \in H^\times
 	\text{ and }
 	a \in \mathscr A(H). 
 	\end{equation*}
 	Consequently, it is evident that
	\begin{equation}\label{equ:almost-equality}
	H^\times A^\prime H^\times \subseteq H^\times \mathscr A(H) H^\times = \mathscr A(H).
	\end{equation}
	We claim that the inclusion in the previous display can be reversed, leading to
	\begin{equation}\label{equ:decompositional-identity}
	\mathscr A(H) = H^\times A^\prime H^\times.
	\end{equation}
	Indeed, fix $a \in \mathscr A(H)$.
  	From \eqref{equ:double-containment}, we have that $a = u_1 a_1 v_1 \cdots u_n a_n v_n$ for some $u_1, \ldots, u_n, v_1, \ldots, v_n \in H^\times$ and $a_1, \ldots, a_n \in A$. However, this is only possible if $n = 1$, because $u_i a_i v_i \in H \setminus H^\times$ for every $i \in \llb 1, n \rrb$ (by definition, an atom can not be a product of two non-units, and we have already established that, in $H$, a product of non-units is still a non-unit). We therefore conclude from \eqref{equ:almost-equality} that $a_1 = u_1^{-1} a v_1^{-1} \in \mathscr A(H)$ and, hence, $a \in H^\times A^\prime H^\times \subseteq \mathscr A(H)$. The latter is enough to prove the claim, since $a$ was arbitrary.
	
	Now, considering that $H^\times$ is a subgroup of $H$ and $A^\prime$ is a finite subset of $\mathscr A(H)$, we obtain from \eqref{equ:decompositional-identity} and Lemma \ref{lem:minimal-generating-set}\ref{lem:minimal-generating-set(i)} that there is a non-empty finite set $A^{\prime\prime}$, consisting of atoms of $H$, such that 
	\begin{equation} \label{equ:another-identity}
	\mathscr A(H) \subseteq \mathrm{Sgp} \langle H^\times A^\prime H^\times \rangle_H = \mathrm{Sgp} \langle H^\times A^{\prime\prime} H^\times \rangle_H
	\end{equation}
	and 
	\begin{equation}\label{equ:generating-minimality}
	a \notin \mathrm{Sgp}\langle H^\times A^{\prime\prime} H^\times \setminus H^\times a H^\times \rangle_H,
	\quad\text{for every }a \in A^{\prime\prime}.
	\end{equation}
	So, choosing one element from each set in the finite family $\{H^\times a H^\times: a \in A^{\prime\prime}\}$ and noting that $\simeq_H$ is an equivalence on $H$ (with the result that $H^\times x H^\times \cap H^\times y H^\times$ is non-empty, for some $x, y \in H$, if and only if $H^\times x H^\times = H^\times y H^\times$), it is straightforward from \eqref{equ:another-identity} and \eqref{equ:generating-minimality} that there is a finite set $\bar{A} \subseteq A^{\prime\prime} \subseteq \mathscr A(H)$ with the property that $\mathscr A(H) \subseteq \mathrm{Sgp}\langle H^\times \bar{A} H^\times \rangle_H$ and $a \not\simeq_H b$ for all $a, b \in \bar{A}$ with $a \ne b$. 
	
	Similarly as in the derivation of \eqref{equ:decompositional-identity}, this implies that $\mathscr A(H) \subseteq H^\times \bar{A} H^\times$, thus completing the proof, because it is clear from \eqref{equ:almost-equality} and the above that $H^\times \bar{A} H^\times \subseteq \mathscr A(H)$.
\end{proof}

\begin{theorem}\label{th:lfgu-BF-is-FF}
	Let $H$ be an l.f.g.u. monoid. The following hold:
	\begin{enumerate}[label=\textup{(\roman{*})}]
	\item\label{it:th:lfgu-BF-is-FF(i)} $H$ is \textup{BF} if and only if it is \textup{FF}.
	\item\label{it:th:lfgu-BF-is-FF(ii)} $\mathsf Z_H^{\sf m}(x)$ is finite for every $x \in H$.
	\end{enumerate}
\end{theorem}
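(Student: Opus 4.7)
My plan is to reduce both parts to the f.g.u. case by invoking Proposition \ref{prop:divisor-closed-sub}: fix $x \in H$ and set $M := \llangle x \rrangle_H$. Since $H$ is l.f.g.u., $M$ is f.g.u., and $\mathsf L_H(x)$, $\mathsf Z_H(x)$, $\mathsf Z_H^{\sf m}(x)$ all coincide with their $M$-analogues. Apply Lemma \ref{lem:atoms-of-atomic-fgu-mon} to $M$ to produce a \emph{finite} system $\bar A \subseteq \mathscr A(M)$ of $\simeq_M$-representatives for the atoms of $M$ (each atom is $\simeq_M$ to a unique element of $\bar A$), and let $\phi \colon \mathscr F(\mathscr A(M)) \to \mathscr F(\bar A)$ be the monoid homomorphism sending each atom to its representative.

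For part \ref{it:th:lfgu-BF-is-FF(i)}, FF trivially implies BF because the length of a factorization is $\mathscr C_M$-invariant. Conversely, suppose $H$ is BF and set $N := \max \mathsf L_M(x) < \infty$. Every factorization in $\mathcal Z_M(x)$ has product $x$, so by the definition of $\mathscr C_M$ its $\mathscr C_M$-class is determined by the multiset of $\simeq_M$-classes of its atoms --- equivalently, by the multiset of letters of $\phi(\mathfrak a)$. This yields an injection from $\mathsf Z_M(x)$ into the set of multisets of size at most $N$ over $\bar A$, which is finite because $\bar A$ is finite.

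For part \ref{it:th:lfgu-BF-is-FF(ii)}, suppose toward a contradiction that $\mathsf Z_M^{\sf m}(x)$ is infinite, and pick a sequence $(\mathfrak a_i)_{i \ge 1}$ of pairwise non-$\mathscr C_M$-equivalent $\preceq_M$-minimal factorizations of $x$. By Higman's lemma (Lemma \ref{lem:higman-lemma}) applied to $(\phi(\mathfrak a_i))_{i \ge 1}$ over the finite alphabet $\bar A$, there exist $i \ne j$ such that $\phi(\mathfrak a_i)$ is a subword of $\phi(\mathfrak a_j)$. Together with $\pi_M(\mathfrak a_i) = \pi_M(\mathfrak a_j) = x$, this yields $\mathfrak a_i \preceq_M \mathfrak a_j$ straight from the definition of $\preceq_M$ (the order-preserving injection witnessing the subword provides the injection of indices matching atoms up to $\simeq_M$). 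The $\preceq_M$-minimality of $\mathfrak a_j$ then forces $\mathfrak a_j \preceq_M \mathfrak a_i$ as well, whence $\mathfrak a_i \equiv \mathfrak a_j \bmod \mathscr C_M$ via the equivalence between $\preceq_M$-bi-dominance and $\mathscr C_M$-equivalence recalled in the proof of Proposition \ref{prop:atomic-iff-min-atomic}, contradicting the choice of the sequence. The main obstacle is securing the finiteness of the alphabet demanded by Higman's lemma, and this is exactly what the reduction to $\bar A$ via Lemma \ref{lem:atoms-of-atomic-fgu-mon} buys us.
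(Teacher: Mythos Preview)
Your proof is correct and follows essentially the same strategy as the paper's: reduce to the f.g.u.\ case via Proposition~\ref{prop:divisor-closed-sub}, apply Lemma~\ref{lem:atoms-of-atomic-fgu-mon} to obtain a finite system $\bar A$ of $\simeq$-representatives for the atoms, and for part~\ref{it:th:lfgu-BF-is-FF(ii)} invoke Higman's lemma over this finite alphabet. The one minor variation is that in~\ref{it:th:lfgu-BF-is-FF(ii)} the paper first uses a counting bound to extract a subsequence of strictly increasing lengths (so that Higman yields a \emph{proper} subword and hence the strict relation $\prec_H$ directly), whereas you skip this step and instead use that $\preceq_M$-minimality of $\mathfrak a_j$ together with $\mathfrak a_i \preceq_M \mathfrak a_j$ forces $\mathfrak a_j \preceq_M \mathfrak a_i$ and thus $\mathscr C_M$-equivalence --- a slight streamlining of the same argument.
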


\begin{proof}
	To start with, we note that, thanks to Proposition \ref{prop:divisor-closed-sub}\ref{it:prop:divisor-closed-sub(iii)}, one can assume without loss of generality that $H$ is an f.g.u. monoid. 
	Accordingly, we derive from Lemma \ref{lem:atoms-of-atomic-fgu-mon} that there is a finite set $\bar{A} \subseteq \mathscr A(H)$ such that $
	\mathscr A(H) = H^\times \bar{A} H^\times$ and $a \not\simeq_H b$ for all $a, b \in \bar{A}$ with $a \ne b$. 
	
	We thus have a well-defined map 
	$
	\varphi: \mathscr F(\mathscr A(H))/\mathscr C_H \to \mathscr F(\bar A)$
	sending the congruence class of $\varepsilon_{\mathscr A(H)}$ (relative to $\mathscr C_H$) to $\varepsilon_{\bar A}$, and the congruence class of a non-empty $\mathscr A(H)$-word $\mathfrak a = a_1 \ast \cdots \ast a_n$ of length $n$ to the unique $\bar A$-word $\bar a_1 \ast \cdots \ast \bar a_n$ of length $n$ such that $a_i \simeq_H \bar a_i$ for each $i \in \llb 1, n \rrb$.
	
	Moreover, we see that, for each $k \in \mathbf N$, there are at most $|\bar A|^k$ words $\mathfrak a \in \mathscr F(\mathscr A(H))$ of length $k$ that are pairwise incongruent modulo $\mathscr C_H$, because $
	\mathscr A(H) = H^\times \bar{A} H^\times$ and hence $\mathfrak a = u_1a_1v_1 \ast \cdots \ast u_k a_k v_k$ for some $u_1, \ldots, u_k, v_1, \ldots, v_k \in H^\times$ and $a_1, \ldots, a_k \in \bar{A}$. In other words, we have
	\begin{equation}\label{equ:(9)}
	\bigl|\bigl\{ \llb \mathfrak a \rrb_{\mathscr C_H}: \mathfrak a \in \mathscr F(\mathscr A(H)) \text{ and } \|\mathfrak a\|_{\mathscr A(H)} \le k \bigr\}\bigr| \le \sum_{i=0}^k |\bar A|^i < \infty, \quad \text{for every }k \in \mathbf N.
	\end{equation}
	With these premises in place, it is now not difficult to finish the proof of the theorem.
	
	\vskip 0.1cm
	\ref{it:th:lfgu-BF-is-FF(i)} Let $H$ be a BF-monoid with $H \ne H^\times$ (it is fairly obvious that an FF-monoid is BF and a group is an FF-monoid), and let $x \in H \setminus H^\times$. Then $\mathsf L_H(x) \subseteq \llb 1, n \rrb$ for some $n \in \mathbf N^+$, implying, by \eqref{equ:(9)}, that
	\[
	|\mathsf Z_H(x)| \le \bigl|\bigl\{ \llb \mathfrak a \rrb_{\mathscr C_H}: \mathfrak a \in \mathscr F(\mathscr A(H)) \text{ and } \|\mathfrak a\|_{\mathscr A(H)} \le n \bigr\} \bigr| < \infty. 
	\]
	This suffices to proves that $H$ is an FF-monoid (since $x$ was an arbitrary element in $H \setminus H^\times$).
	
	\vskip 0.1cm
	\ref{it:th:lfgu-BF-is-FF(ii)} Suppose to the contrary that $\mathsf Z_H^{\sf m}(x)$ is infinite for some $x \in H$. It then follows from \eqref{equ:(9)} that there is a sequence $(\mathfrak a_i)_{i \ge 1}$ of $\preceq_H$-minimal factorizations of $x$ such that 
	\[
	1 \le \|\mathfrak a_i\|_{\mathscr A(H)} < \|\mathfrak a_{i+1}\|_{\mathscr A(H)}, 
	\quad\text{for every } i \in \mathbf N^+.
	\] 
	Set $\bar{\mathfrak a}_1 := \varphi(\mathfrak a_1)$, $\bar{\mathfrak a}_2 := \varphi(\mathfrak a_2)$, \ldots, where $\varphi$ is the function defined in the premises of the proof. Then by Higman's lemma (that is, Lemma \ref{lem:higman-lemma}), there are $\hat{\imath}, \hat{\jmath} \in \mathbf N^+$ with $\hat{\imath} < \hat{\jmath}$ such that $\bar{\mathfrak a}_{\hat{\imath}}$ is a non-empty proper subword of $\bar{\mathfrak a}_{\hat{\jmath}}$ (by construction, $\bar A$ is a finite alphabet and $\bar{\mathfrak a}_1, \bar{\mathfrak a}_2, \ldots$ are $\bar A$-words). 
	
	Let $m$ denote the length of $\bar{\mathfrak a}_{\hat{\imath}}$ and $n$ the length of $\bar{\mathfrak{a}}_{\hat{\jmath}}$. By the above, there exist $a_1, \ldots, a_n \in \bar{A}$ and a strictly increasing function $\sigma: \llb 1, m \rrb \to \llb 1, n \rrb$ such that $\bar{\mathfrak a}_{\hat{\imath}} = a_{\sigma(1)} \ast \cdots \ast a_{\sigma(m)}$ and $\bar{\mathfrak a}_{\hat{\jmath}} = a_1 \ast \cdots \ast a_n$. On the other hand, we have by definition of $\varphi$ that, for some $s_1, t_1, \ldots, s_m, t_m, u_1, v_1, \ldots, u_n, v_n \in H^\times$,
	\begin{equation*}
	\mathfrak a_{\hat\imath} = s_1 a_{\sigma(1)} t_1 \ast \cdots \ast s_m a_{\sigma(m)} t_m
	\quad\text{and}\quad
	\mathfrak a_{\hat\jmath} = u_1 a_1 v_1 \ast \cdots \ast u_n a_n v_n.
	\end{equation*}
	However, since $\pi_H(\mathfrak a_{\hat\imath}) = \pi_H(\mathfrak a_{\hat\jmath}) = x$, $\|\mathfrak a_{\hat\imath}\|_{\mathscr A(H)} < \|\mathfrak a_{\hat\jmath}\|_{\mathscr A(H)}$, and $s_i a_{\sigma(i)} t_i \simeq_H u_{\sigma(i)} a_{\sigma(i)} v_{\sigma(i)}$ for each $i \in \llb 1, m \rrb$, this yields $\mathfrak a_{\hat\imath} \prec_H \mathfrak a_{\hat\jmath}$, contradicting the $\preceq_H$-minimality of $\mathfrak a_{\hat\jmath}$.
\end{proof}

\begin{corollary}\label{cor:lfgu-is-FmF}
Every atomic l.f.g.u. monoid is \textup{FmF}.
\end{corollary}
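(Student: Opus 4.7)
My plan is to observe that this corollary follows immediately by unpacking the definition of an \textup{FmF}-monoid and then appealing to Theorem~\ref{th:lfgu-BF-is-FF}\ref{it:th:lfgu-BF-is-FF(ii)}. By definition, $H$ is \textup{FmF} precisely when $H$ is atomic and $\mathsf{Z}_H^{\sf m}(x)$ is finite for every $x \in H$. The first condition holds by hypothesis. The second is exactly the content of Theorem~\ref{th:lfgu-BF-is-FF}\ref{it:th:lfgu-BF-is-FF(ii)}, which requires only that $H$ be l.f.g.u.\ and does not even appeal to atomicity. Putting these two together yields the corollary with no further work.

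Accordingly, I do not anticipate any real obstacle at this step: the genuine combinatorial substance has already been packaged into the proof of Theorem~\ref{th:lfgu-BF-is-FF}\ref{it:th:lfgu-BF-is-FF(ii)}. There, one first uses Proposition~\ref{prop:divisor-closed-sub}\ref{it:prop:divisor-closed-sub(iii)} to reduce to the f.g.u.\ case (since the property ``$\mathsf{Z}_H^{\sf m}(x)$ is finite'' is inherited from the divisor-closed submonoid $\llangle x \rrangle_H$); then Lemma~\ref{lem:atoms-of-atomic-fgu-mon} supplies a finite transversal $\bar A \subseteq \mathscr{A}(H)$ of the relation $\simeq_H$ on atoms, so that every $\mathscr{A}(H)$-word corresponds, via a well-defined quotient map $\varphi$, to a $\bar A$-word; the counting bound~\eqref{equ:(9)} then controls the number of $\mathscr{C}_H$-classes in each length; and Higman's lemma (Lemma~\ref{lem:higman-lemma}) finishes the job by extracting, from any hypothetical infinite family of pairwise incongruent $\preceq_H$-minimal factorizations of $x$, two elements with a subword relation that pulls back to a strict $\prec_H$-comparison and so contradicts $\preceq_H$-minimality. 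The present corollary merely harvests this with the atomicity assumption attached.
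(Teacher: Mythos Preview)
Your proposal is correct and essentially matches the paper's own proof, which simply cites Theorem~\ref{th:lfgu-BF-is-FF}\ref{it:th:lfgu-BF-is-FF(ii)} together with Proposition~\ref{prop:atomic-iff-min-atomic}. The extra reference to Proposition~\ref{prop:atomic-iff-min-atomic} in the paper is not strictly needed given the definition of FmF (atomic plus $\mathsf{Z}_H^{\sf m}(x)$ finite), so your version is if anything slightly leaner; your recap of the mechanism behind Theorem~\ref{th:lfgu-BF-is-FF}\ref{it:th:lfgu-BF-is-FF(ii)} is accurate but unnecessary for the corollary itself.
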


\begin{proof}
This is a direct consequence of Proposition \ref{prop:atomic-iff-min-atomic} and Theorem \ref{th:lfgu-BF-is-FF}\ref{it:th:lfgu-BF-is-FF(ii)}.\end{proof}

Of course, an l.f.g.u. monoid $H$ need not be atomic (e.g., due to the presence of non-trivial idempotents that do not factor into a product of atoms). Less trivially, there exist cancellative f.g. monoids that are atomic but not BF, as will be seen in the next section (Example \ref{exa:atomic-2-gen-1-rel-canc-monoid}).
All in all, this shows that Theorem \ref{th:lfgu-BF-is-FF}\ref{it:th:lfgu-BF-is-FF(i)} and Corollary \ref{cor:lfgu-is-FmF} are, in a sense, best possible (see also Corollary \ref{cor:lfgu-acyclic-mons-are-atomic}).

\section{Acyclic monoids} 
\label{sec:acyclic-monoids}

We know from Corollary \ref{cor:lfgu-is-FmF} that an atomic l.f.g.u. monoid is FmF, and in the current section we look for sufficient conditions for an l.f.g.u. monoid to be atomic (or more).

\begin{definition}\label{def:acyclic}
	We call a monoid $H$ \emph{acyclic} if $x \ne uxv$ for all $u, v, x \in H$ with $u \notin H^\times$ or $v \notin H^\times$.
\end{definition}

We will see that acyclic monoids ``abound in nature''. But first a remark of general character:

\begin{remark}\label{rem:history} 
	Obviously, a cancellative or acyclic monoid is unit-cancellative; a free monoid is acyclic; and a commutative monoid is acyclic if and only if it is unit-cancellative. However, a non-commutative cancellative monoid need not be acyclic (Example \ref{exa:non-atomic-2-generator-1-relator-canc-mon}); and there is a wide assortment of commutative unit-cancellative monoids that are non-cancellative, including families of monoids of ideals, of monoids of modules, and of power monoids, see \cite[\S\S{ }3.2--3.4]{FGKT} and \cite[\S{ }3]{Fa-Tr18}. 
\end{remark}

With this said, we discuss in some detail a few basic features of three ``large'' families of non-commutative acyclic monoids arising from the literature. In particular, the first example is inspired by J.F. Ritt's seminal work \cite{Ri22} on univariate polynomials that are irreducible with respect to functional composition (see \cite{KrTi15} for a survey on this subject); and the second by the work of various authors on the arithmetic of matrix rings (see \cite{BaPo11,BaBaGo14,BaJe16}, \cite[\S{ }5]{Sm16}, and the bibliography therein). 

We shall refer to \cite[Chap. I, \S{ }8]{Bou98} for fundamental aspects of ring theory, and to \cite[Chap. II, \S{ }10 and Chap. III, \S{ }8.3]{Bou98} for fundamental aspects of linear algebra over commutative rings.

\begin{example}
\label{exa:polynomial-transformation-monoid}
Let $0_R$ be the zero, $1_R$ the multiplicative identity, $R^\times$ the group of (multiplicative) units, and $R^\bullet$ the set of regular (or cancellable) elements of a non-trivial commutative ring $R$, and let $R[X]$ be the ring of polynomials in one indeterminate $X$ with coefficients in $R$. (We recall that an element $a \in R$ is regular if $ax \ne 0_R$ for every non-zero $x \in R$, and $R$ is non-trivial if $0_R \ne 1_R$.)

It is obvious (and well known) that $R^\bullet$ is a submonoid of the multiplicative monoid of $R$ and $R^\times$ is a subgroup of $R^\bullet$. Hence, one can readily verify that the set of all non-constant polynomials $\sum_{i=0}^n \alpha_n X^n \in R[X]$ whose leading coefficient $\alpha_n$ belongs to $R^\bullet$, is itself a monoid, herein denoted by $R_\circ[X]$, under the operation $\circ$ of \emph{functional composition} defined by:
\[
f \circ g(X) := f(g(X)), \quad \text{for all } f, g \in R_\circ[X].
\]
The identity of $R_\circ[X]$ is the polynomial $\mathbf{1}_R(X) := X \in R[X]$, and it is  easily seen that 
\begin{equation}\label{equ:4.9}
R_\circ[X]^\times = \{\alpha X + \beta \in R[X]: \alpha \in R^\times \text{ and }\beta \in R\}.
\end{equation}
We claim that $R_\circ[X]$ is an acyclic monoid. 
Indeed, assume $f = u \circ f \circ v$ for some $f, u, v \in R_\circ[X]$. Then $\deg(f) = \deg (u) \, \deg (f) \, \deg (v) \ne 0$, which is only possible if $\deg (u) = \deg (v) = 1$, that is, $u(X) = aX + b$ and $v(X) = cX + d$ for some $a, c \in R^\bullet$ and $b, d \in R$. It follows that
\begin{equation*}
q X^{\deg(f)} + g(X) = f(X) = u \circ f \circ v(X) = a f(cX + d) + b = a q c X^{\deg (f)} + h(X), 
\end{equation*}
where $q$ is the leading coefficient of $f$, and $h$ and $g$ are  polynomials in $R[X]$ of degree $\le \deg(f)-1$. This in turn implies $q (ac - 1_R) = 0_R$, with the result that $ac = 1_R$ because $q$ is a regular element of $R$. Then $a, c \in R^\times$, and we conclude, by \eqref{equ:4.9}, that $u, v \in R_\circ[X]^\times$ (as wished).

Finally, let $\kappa$ be the order of $1_R$ relative to the additive group of $R$, and consider the polynomials 
\[
F(X) = X^2+X \in R_\circ[X], 
\quad
G(X) = X - 1_R \in R_\circ[X],
\quad\text{and}\quad
H(X) = X^2 \in R_\circ[X]. 
\]
A simple calculation reveals that $F \circ G(X) = X^2-X \ne X^2 + X - 1_R = G \circ F(X)$. Moreover, we find that, if $\kappa = 2$, then $F \circ G = F \circ \mathbf{1}_R$ although $G \ne \mathbf{1}_R$;  and if $\kappa \ne 2$, then $H \circ \textbf{1}_R = H \circ (-\textbf{1}_R)$ although $\textbf{1}_R \ne -\textbf{1}_R$. On the whole, this shows that $R_\circ[X]$ is neither commutative nor cancellative (regardless of the actual choice of the ring $R$).
\end{example}

\begin{example}\label{exa:regular-matrix}
Fix an integer $n \ge 2$ and let $\mathcal{M}_n(R)$ be the ring of all $n$-by-$n$ matrices with entries in a non-trivial, commutative ring $R$ (endowed with the usual operations of entrywise addition and row-by-column multiplication). By \cite[Chap. III, \S{ }8.3, Proposition 5]{Bou98}, a matrix in $\mathcal M_n(R)$ is a unit (with respect to multiplication) if and only if its determinant is a unit of $R$ (we keep using notation and terminology introduced in Example \ref{exa:polynomial-transformation-monoid}); moreover, we have
\begin{equation}\label{equ:cauchy-binet}
\det(AB) = \det(A) \det(B), 
\quad\text{for all }
A, B \in \mathcal M_n(R).
\end{equation}
Since $R^\bullet$ is a sub\-monoid of the multiplicative monoid of $R$ and $R^\times$ is a subgroup of $R^\bullet$, it follows that 
\[
\mathcal{S}_n(R) := \{A \in \mathcal{M}_n(R): \det(A) \in R^\bullet\}
\]
is a submonoid of the multiplicative monoid of $\mathcal{M}_n(R)$ whose group of units is precisely the general linear group of degree $n$ over $R$, that is to say,
\begin{equation}\label{equ:cramer}
\mathcal{S}_n(R)^\times = \text{GL}_n(R) := \{A \in \mathcal M_n(R): \det(A) \in R^\times\}.
\end{equation}
We will find that $\mathcal{S}_n(R)$ is cancellative, acyclic, and non-commutative (for any choice of the ring $R$).

First, assume $ABC = B$ for some $A, B, C \in \mathcal{S}_n(R)$. Then \eqref{equ:cauchy-binet} yields $\det(B) = \det(A) \det(B) \det(C)$, 
and hence $(\det(A) \det(C) - 1_R) \det(B) = 0_R$. But this can only happen if $\det(A) \det(C) = 1_R$, because $\det(B) \in R^\bullet$. Therefore, $\det(A)$ and $\det(C)$ are units of $R$, implying by \eqref{equ:cramer} that $\mathcal{S}_n(R)$ is acyclic.

On the other hand, it is an easy exercise to see that $\mathcal{S}_n(R)$ is non-commutative. Indeed, consider the matrix $D \in \mathcal M_n(R)$ whose $(i,j)$-entry is $1_R$ if $i = j$ or $i = j - 1 = 1$, and $0_R$ otherwise ($1 \le i, j \le n$). Then $D$ and its transpose $D^t$ are both in $\mathcal{S}_n(R)$, since their determinant is $1_R$; but $D D^t \ne D^t D$ because the $(1,1)$-entry of $D D^t$ is $1_R + 1_R$ and the $(1,1)$-entry of $D^t D$ is $1_R$.

Lastly, denote by $\mathcal Q(R)$ the total ring of fractions of $R$ (see \cite[Chap. I, \S{ }8.12]{Bou98} for terminology), by $\mathcal Q(R)^\times$ the group of (multiplicative) units of $\mathcal Q(R)$, and
by $\mathcal M_n(\mathcal Q(R))$ the ring of $n$-by-$n$ matrices with entries in $\mathcal Q(R)$.
Then
$R^\bullet \subseteq \mathcal Q(R)^\times$ (essentially by definition), and this in turn entails that the inclusion map $\mathcal M_n(R) \hookrightarrow \mathcal M_n(\mathcal Q(R))$ yields an injective monoid homomorphism from $\mathcal S_n(R)$ to $\text{GL}_n(\mathcal Q(R))$. Thus $\mathcal S_n(R)$ is cancellative, since it embeds (as a monoid) into a group.
\end{example}

\begin{example}\label{exa:4.5}
We say that a monoid $H$ is \emph{normalizing} if $aH = Ha$ for every $a \in H$. Obviously, every commutative monoid is normalizing; and important examples of non-commutative normalizing monoids arise from the study of ring extensions and monoid algebras \cite{Je-Ok}.

Let $H$ be a unit-cancellative normalizing monoid, and suppose $uxv = x$ for some $u, v, x \in H$. Since $
H$ is normalizing, there then exist $\bar{u}, \bar{v} \in H$ such that $x = uxv = x\bar{u} v = u\bar{v}x$; and since $H$ is unit-cancellative, this is only possible if $\bar{u} v$ and $u\bar{v}$ are units. It thus follows by Lemma \ref{lem:2.4}\ref{lem:2.4(ii)} that $u$ and $v$ are both units; whence we conclude that $H$ is acyclic.
\end{example}

Next, we establish that BF-monoids are acyclic (Proposition \ref{prop:BF-implies-acyclic}) and show how to build new acyclic monoids from old ones (Proposition \ref{prop:new-from-old}). But first, we need to fix a mistake in \cite[Remark 2.4]{Fa-Tr18}.

\begin{proposition}\label{prop:subadditivity-of-length-sets}
	Let $H$ be a monoid, and let $x, y \in H$. The following hold:
	\begin{enumerate}[label=\textup{(\roman{*})}]
		\item\label{it:prop:subadditivity-of-length-sets(i)} $\mathsf L_H(x) + \mathsf L_H(y) \subseteq \mathsf L_H(xy)$.
		\item\label{it:prop:subadditivity-of-length-sets(ii)} If $\mathsf L_H(x)$ and $\mathsf L_H(y)$ are both non-empty or at least one of $x$ and $y$ is a unit, then 
		\begin{equation}\label{equ:sup-inequality}
		\sup \mathsf L_H(x) + \sup \mathsf L_H(y) \le \sup \mathsf L_H(xy), 
		\end{equation}
		where $\sup \emptyset := 0$. In particular, \eqref{equ:sup-inequality} holds when $H$ is atomic.
	\end{enumerate}
\end{proposition}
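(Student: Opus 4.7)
The plan is to prove (i) by a one-line construction on free-monoid factorizations, and then to derive (ii) from (i) via a case split based on the disjunction in the hypothesis.

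For (i), given $m \in \mathsf{L}_H(x)$ and $n \in \mathsf{L}_H(y)$, I would pick words $\mathfrak{a} \in \mathcal{Z}_H(x)$ with $\|\mathfrak{a}\|_{\mathscr{A}(H)} = m$ and $\mathfrak{b} \in \mathcal{Z}_H(y)$ with $\|\mathfrak{b}\|_{\mathscr{A}(H)} = n$. Their concatenation $\mathfrak{a} \ast \mathfrak{b}$ is an $\mathscr{A}(H)$-word of length $m+n$, and since $\pi_H$ is a monoid homomorphism, $\pi_H(\mathfrak{a} \ast \mathfrak{b}) = \pi_H(\mathfrak{a})\pi_H(\mathfrak{b}) = xy$, so $m+n \in \mathsf{L}_H(xy)$.

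For (ii), if $\mathsf{L}_H(x)$ and $\mathsf{L}_H(y)$ are both non-empty, then (i) gives $\mathsf{L}_H(x) + \mathsf{L}_H(y) \subseteq \mathsf{L}_H(xy)$, and the inequality follows from the standard identity $\sup(A+B) = \sup A + \sup B$ for non-empty subsets $A, B$ of $\mathbf{N} \cup \{\infty\}$. Otherwise the hypothesis supplies a unit; without loss of generality $x \in H^\times$. The key auxiliary observation is then that $\mathsf{L}_H(x) \subseteq \{0\}$, and hence $\sup \mathsf{L}_H(x) = 0$: if $\mathscr{A}(H)$ is empty, then $\mathcal{Z}_H(x) \subseteq \{\varepsilon\}$; and if $\mathscr{A}(H)$ is non-empty while $x = a_1 \cdots a_m$ with $m \ge 1$ and $a_1, \ldots, a_m \in \mathscr{A}(H)$, then Lemma~\ref{lem:2.4}\ref{lem:2.4(ii)} forces $a_1$ to be a unit, contradicting $a_1 \in \mathscr{A}(H)$.

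It therefore suffices to prove $\sup \mathsf{L}_H(y) \le \sup \mathsf{L}_H(xy)$. Applying Lemma~\ref{lem:2.4}\ref{lem:2.4(i)} with $u = x$ and $v = 1_H$, if $y = b_1 \cdots b_n$ is a factorization into atoms with $n \ge 1$, then $xb_1 \in \mathscr{A}(H)$ and $(xb_1) \ast b_2 \ast \cdots \ast b_n$ witnesses $n \in \mathsf{L}_H(xy)$; hence $\mathsf{L}_H(y) \cap \mathbf{N}^+ \subseteq \mathsf{L}_H(xy)$, and since adjoining $0$ to a set of non-negative integers never changes its supremum, $\sup \mathsf{L}_H(y) \le \sup \mathsf{L}_H(xy)$. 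The ``in particular'' clause is then immediate, since atomicity of $H$ guarantees that for every $z \in H$ either $z \in H^\times$ or $\mathcal{Z}_H(z) \ne \emptyset$, so at least one of the two alternative hypotheses always holds. The main obstacle, in sum, will be the edge-case bookkeeping around units --- reconciling the convention $\sup \emptyset := 0$ with the fact that a non-identity unit has empty length set whenever $\mathscr{A}(H)$ is non-empty --- but once Lemma~\ref{lem:2.4} is deployed in both directions (to rule out units being non-trivial products of atoms, and to absorb a unit into an adjacent atom), nothing else is non-routine.
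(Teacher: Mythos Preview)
Your proof is correct and follows essentially the same route as the paper's: part \ref{it:prop:subadditivity-of-length-sets(i)} by concatenating factorizations (the paper merely cites \cite[Example 2.2]{Tr19a} for this), and part \ref{it:prop:subadditivity-of-length-sets(ii)} by the same case split, handling the non-empty case via $\sup(A+B)=\sup A+\sup B$ and the unit case via Lemma~\ref{lem:2.4}. The only minor difference is that, in the unit case, the paper invokes \cite[Lemma 2.2(iv)]{Fa-Tr18} to obtain the \emph{equality} $\mathsf{L}_H(y)=\mathsf{L}_H(xy)$ (for $y$ a non-unit), whereas you derive the needed inequality directly by absorbing $x$ into the first atom of a factorization of $y$ via Lemma~\ref{lem:2.4}\ref{lem:2.4(i)}; your argument is thus slightly more self-contained.
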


\begin{proof}
	Part \ref{it:prop:subadditivity-of-length-sets(i)} is a direct consequence of \cite[Example 2.2]{Tr19a}. As for part \ref{it:prop:subadditivity-of-length-sets(ii)}, it follows from \ref{it:prop:subadditivity-of-length-sets(i)} that
	\[
	\sup (\mathsf L_H(x) + \mathsf L_H(y)) \le \sup \mathsf L_H(xy).
	\]
	So, since $\sup(X + Y) = \sup X + \sup Y$ for all non-empty subsets $X$ and $Y$ of $\bf R$, we see that \eqref{equ:sup-inequality} holds if $\mathsf L_H(x)$ and $\mathsf L_H(y)$ are both non-empty. Accordingly, assume from this point on that $x$ is a unit (the case when $y \in H^\times$ is similar). We have by Lemma \ref{lem:2.4}\ref{lem:2.4(ii)} and \cite[Lemma 2.2(iv)]{Fa-Tr18} that
	\[
	\mathsf L_H(u) \subseteq \{0\}
	\quad\text{and}\quad
	\mathsf L_H(z) = \mathsf L_H(uzv), 
	\quad\text{for all } u, v \in H^\times \text{ and }z \in H \setminus H^\times. 
	\]
	Since $x$ is a unit and $H^\times$ is a subgroup of $H$, it follows that $\sup \mathsf L_H(x) = 0$ and $\sup \mathsf L_H(y) = \sup \mathsf L_H(xy)$, whence $\sup \mathsf L_H(xy) =
	\sup \mathsf L_H(x) + \sup \mathsf L_H(y)$ and we are done.
\end{proof}

\begin{proposition}
\label{prop:BF-implies-acyclic}
	Every \textup{BF}-monoid is acyclic.
\end{proposition}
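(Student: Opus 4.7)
The plan is to argue by contradiction, exploiting the subadditivity of $\sup \mathsf L_H$ proved in Proposition \ref{prop:subadditivity-of-length-sets}\ref{it:prop:subadditivity-of-length-sets(ii)}. Assuming that $H$ is a BF-monoid that is not acyclic, I would pick $u, v, x \in H$ with $u \notin H^\times$ or $v \notin H^\times$ and $uxv = x$. Since at least one non-unit exists, $H \ne H^\times$, and atomicity then guarantees that $\mathscr A(H) \ne \emptyset$, putting Lemma \ref{lem:2.4}\ref{lem:2.4(ii)} at my disposal.

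Next, I would introduce the shorthand $\ell(y) := \sup \mathsf L_H(y)$ (with the convention $\sup \emptyset := 0$) and record two basic facts: first, $\ell(y) \in \mathbf N$ for every $y \in H$, because $H$ is BF; second, $\ell(y) \ge 1$ exactly when $y \notin H^\times$. The ``$\ge 1$'' direction uses atomicity to produce a non-empty $\mathscr A(H)$-factorization of any non-unit, while the converse relies on Lemma \ref{lem:2.4}\ref{lem:2.4(ii)} to rule out a non-empty $\mathscr A(H)$-factorization of a non-identity unit (a product of atoms is a product of non-units, hence a non-unit, so it cannot equal a unit).

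The conclusion will then fall out by applying Proposition \ref{prop:subadditivity-of-length-sets}\ref{it:prop:subadditivity-of-length-sets(ii)} twice, first to the product $u \cdot (xv)$ and then to $x \cdot v$, to arrive at $\ell(uxv) \ge \ell(u) + \ell(x) + \ell(v)$. Combining this with $uxv = x$ and the finiteness of $\ell(x)$, one cancels $\ell(x)$ on both sides to obtain $\ell(u) + \ell(v) \le 0$, which forces $\ell(u) = \ell(v) = 0$ and hence $u, v \in H^\times$ by the characterization above — contradicting the choice of $u$ and $v$. I do not anticipate any serious obstacle; the only bookkeeping step is verifying the hypothesis of the subadditivity proposition at each of the two applications, which is a routine case split: at each product, either both factors have non-empty length sets (true by atomicity when neither is a unit) or at least one factor is a unit (so the ``unit clause'' of the proposition applies directly).
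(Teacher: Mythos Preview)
Your argument is correct and rests on the same key tool as the paper (Proposition~\ref{prop:subadditivity-of-length-sets}\ref{it:prop:subadditivity-of-length-sets(ii)}), but you reach the contradiction along a slightly different path. The paper first observes that $uxv = x$ forces $x = u^k x v^k$ for every $k \in \mathbf N$, then applies subadditivity to obtain $\sup \mathsf L_H(x) \ge \sup \mathsf L_H(u^k) \ge k$ for all $k$, contradicting finiteness of $\mathsf L_H(x)$. You instead apply subadditivity just once (in two steps) to $x = u(xv)$ and $xv = x \cdot v$, getting $\ell(x) \ge \ell(u) + \ell(x) + \ell(v)$, and then cancel the finite quantity $\ell(x)$ to conclude $\ell(u) = \ell(v) = 0$. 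Your route is a bit more economical: it avoids the induction on $k$ and the passage through arbitrarily long products, at the modest cost of the extra observation that $\ell(y) = 0$ characterizes units (which you justify cleanly via atomicity and Lemma~\ref{lem:2.4}\ref{lem:2.4(ii)}). The paper's iteration, on the other hand, makes the quantitative blow-up of $\mathsf L_H(x)$ explicit, which some readers may find more vivid.
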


\begin{proof}
	Let $H$ be a BF-monoid, and suppose for the sake of contradiction that $H$ is not acyclic, namely, there exist $u, v, x \in H$ with $u \notin H^\times$ or $v \notin H^\times$ such that
	$uxv = x$. In fact, we may assume that $u \notin H^\times$ (the other case is essentially the same). Then $u$ is a non-empty product of atoms (because $H$ is atomic), and hence $\mathsf L_H(u)$ is a non-empty subset of $\mathbf N^+$. 
	On the other hand, $x = uxv$ yields (by induction) that $x = u^k x v^k$ for every $k \in \mathbf N$. So putting it all together, we conclude from Proposition \ref{prop:subadditivity-of-length-sets}\ref{it:prop:subadditivity-of-length-sets(ii)} that
	\[
	\sup \mathsf L_H(x) = \sup \mathsf L_H(u^k x v^k) \geq \sup \mathsf L_H(u^k)
	\ge k\, \sup \mathsf L_H(u) \ge k, \quad \text{for each } k \in \mathbf N.
	\]
	This, however, means that $\sup \mathsf L_H(x) = \infty$, contradicting the hypothesis that $H$ is a BF-monoid.
\end{proof}

\begin{example}
In the notation of Example \ref{exa:polynomial-transformation-monoid}, assume that $R$ is a field. Then we have by \eqref{equ:4.9} that every polynomial $f \in R_\circ[X]$ of degree $1$ is a unit. In consequence, we find that the function
\[
\lambda: R_\circ[X] \to \mathbf N: f \mapsto \deg(f)
\]	
is a \emph{length function} on $R_\circ[X]$, meaning that, if $f, g \in R_\circ[X]$ and $g = u \circ f \circ v$ for some $u, v \in R_\circ[X]$ such that $u$ or $v$ is not a unit, then $\lambda(g) < \lambda(f)$, see \cite[Definition 2.26]{Fa-Tr18}. In fact, it is clear that
\[
\lambda(f \circ g) = \lambda(f)\, \lambda(g), \quad\text{for all }f, g \in R_\circ[X].
\]
By \cite[Corollary 2.29]{Fa-Tr18}, it follows that $R_\circ[X]$ is a BF-monoid; so, in particular, $R_\circ[X]$ is acyclic (Proposition \ref{prop:BF-implies-acyclic}), although we know that this holds more generally without $R$ being a field (Example \ref{exa:polynomial-transformation-monoid}).

It is perhaps worth mentioning that, by Ritt's first decomposition theorem and its generalizations, $R_\circ[X]$ is actually an HF-monoid when $R$ is a field of characteristic zero, see \cite[Theorem 2.1]{KrTi15}. This kind of results have stimulated a great deal of research (e.g., in connection to the solution of certain types of Diophantine equations), and it is not implausible that bringing them under the umbrella of factorization theory could open the door to new and interesting developments.
\end{example}

\begin{proposition}\label{prop:new-from-old}
The following hold:
\begin{enumerate}[label={\rm (\roman{*})}]
	\item\label{it:prop:new-from-old(i)} If $\varphi: H \to K$ is a monoid homomorphism with $K$ acyclic and $\varphi^{-1}(K^\times) \subseteq H^\times$, then $H$ is acyclic.	
	\item\label{it:prop:new-from-old(ii)} If $K$ is a submonoid of an acyclic monoid $H$ and $K \cap H^\times \subseteq K^\times$, then $K$ is acyclic.
	\item\label{it:prop:new-from-old(iii)} Products and coproducts of acyclic monoids are acyclic.
\end{enumerate}
\end{proposition}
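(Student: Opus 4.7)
The plan is to establish (i) as the workhorse, then derive (ii) and both halves of (iii) by applying it to suitable monoid homomorphisms; the only delicate step will be a unit-characterization in the free product needed for the coproduct case.

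For (i), I would start from any relation $x = uxv$ in $H$, push it through $\varphi$ to get $\varphi(x) = \varphi(u)\,\varphi(x)\,\varphi(v)$ in $K$, invoke the acyclicity of $K$ to force $\varphi(u), \varphi(v) \in K^\times$, and then use the hypothesis $\varphi^{-1}(K^\times) \subseteq H^\times$ to upgrade this to $u, v \in H^\times$. For (ii), the inclusion map $\iota : K \hookrightarrow H$ is a monoid homomorphism with $\iota^{-1}(H^\times) = K \cap H^\times \subseteq K^\times$, so (i) applies verbatim.

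The product case of (iii) is componentwise: if $H = \prod_j H_j$ and $x = uxv$ in $H$, then for each index $j$ we have $x_j = u_j x_j v_j$ in the acyclic monoid $H_j$, whence $u_j, v_j \in H_j^\times$ and therefore $u, v \in \prod_j H_j^\times = H^\times$.

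The coproduct case is where the real work lies. Write $H = \coprod_j H_j$ for the coproduct in the category of monoids (i.e.\ the free product of the $H_j$). For each $i$, the universal property (applied to $\mathrm{id}_{H_i}$ on $H_i$ together with the trivial maps $H_j \to H_i$ for $j \ne i$) produces a retraction $\pi_i : H \to H_i$; packaging these together yields a monoid homomorphism $\Phi : H \to \prod_i H_i$ into a monoid that is acyclic by the product case already established. Part (i) will close the argument provided $\Phi^{-1}\bigl((\prod_i H_i)^\times\bigr) \subseteq H^\times$, i.e.\ provided that whenever $\pi_i(u) \in H_i^\times$ for every $i$ we have $u \in H^\times$. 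This implication is the main obstacle: to handle it I would write $u$ in its reduced free-product form $a_1 a_2 \cdots a_m$, with $a_k \in H_{i_k} \setminus \{1_{H_{i_k}}\}$ and $i_k \ne i_{k+1}$, note that $\pi_j(u)$ equals the ordered product of those $a_k$ for which $i_k = j$, and exploit the fact that each $H_j$ is unit-cancellative (as an acyclic monoid, see Remark \ref{rem:history}) so that, by the iterated form of Lemma \ref{lem:2.4}\ref{lem:2.4(ii)}, each $a_k$ must lie in $H_{i_k}^\times$; a telescoping cancellation then shows that $a_m^{-1} \cdots a_1^{-1}$ inverts $u$ in $H$.
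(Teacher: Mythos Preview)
Your arguments for (i), (ii), and the product half of (iii) are essentially identical to the paper's. The divergence is in the coproduct half of (iii): the paper takes ``coproduct'' to mean the restricted direct product (functions $f \in \prod_i H_i$ with $f(i) = 1_{H_i}$ for all but finitely many $i$), and then simply observes that this is a submonoid of the product satisfying $K \cap H^\times \subseteq K^\times$, so (ii) finishes the job in one line. You instead read ``coproduct'' as the categorical coproduct in the category of monoids, i.e.\ the free product $\coprod_i H_i$, and prove the corresponding (harder) statement.

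Your free-product argument is correct. The retractions $\pi_i$ exist by the universal property, the map $\Phi$ into the product lands in an acyclic monoid by the product case, and the key step---that $\Phi(u)$ a unit forces $u$ a unit---goes through exactly as you sketch: writing $u = a_1 \cdots a_m$ with $a_k \in H_{i_k} \setminus \{1\}$ and $i_k \ne i_{k+1}$, each $\pi_j(u)$ is the ordered product of the $a_k$ with $i_k = j$, and since $H_j$ is acyclic (hence unit-cancellative), Lemma~\ref{lem:2.4}\ref{lem:2.4(ii)} iterated (or equivalently Lemma~\ref{lem:dedekind-finite}\ref{lem:dedekind-finite(i)}) forces every $a_k$ into $H_{i_k}^\times$, after which $a_m^{-1} \cdots a_1^{-1}$ is a two-sided inverse of $u$ by telescoping. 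What your approach buys is the genuinely categorical statement for non-commutative monoids; what the paper's approach buys is brevity, at the cost of using ``coproduct'' for an object that is only the coproduct in the commutative category.
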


\begin{proof}
\ref{it:prop:new-from-old(i)} Let $\varphi: H \to K$ be a monoid homomorphism, and suppose that  $uxv = x$ for some $u, v, x \in H$. Then $\varphi(u) \, \varphi(x) \, \varphi(v) = \varphi(x)$, and if $K$ is acyclic, this is only possible if $\varphi(u), \varphi(v) \in K^\times$. Therefore, if $K$ is acyclic and $\varphi^{-1}(K^\times) \subseteq H^\times$, then $u, v \in H^\times$. So we are done, since $u$, $v$, and $x$ were arbitrary.
	
\vskip 0.1cm
\ref{it:prop:new-from-old(ii)} This is straightforward from part \ref{it:prop:new-from-old(i)}, when considering that, if $K$ is a submonoid of a monoid $H$, then the inclusion map $\imath: K \to H: x \mapsto x$ is a monoid homomorphism with $\imath^{-1}(H^\times) = K^\times$. 

\vskip 0.1cm
\ref{it:prop:new-from-old(iii)} Let $H$ and $K$ be, resp., the product and coproduct of an indexed family $(H_i)_{i \in I}$ of acyclic monoids.
As is usual, we will regard an element of $H$ as a function $f: I \to \bigcup_{i \in I} H_i$ such that $f(i) \in H_i$ for each $i \in I$, and an element of $K$ as a function $f \in H$ such that $f(i) \ne 1_{H_i}$ for all but finitely many $i \in I$. Accordingly, to multiply two elements $f, g \in H$ is to consider the function $fg: I \mapsto \bigcup_{i \in I} H_i: i \mapsto f(i) g(i)$. 
Then the identity of $H$ is the function $I \to \bigcup_{i \in I} H_i: i \mapsto 1_{H_i}$, and $f \in H$ is a unit if and only if $f(i) \in H_i^\times$ for every $i \in I$; also, $K$ is obviously a submonoid of $H$ with $K \cap H^\times \subseteq K^\times$. So, by part \ref{it:prop:new-from-old(ii)}, it suffices to check that $H$ is acyclic. So, assume $ufv = f$ for some $u, f, v \in H$. Then $u(i)\, f(i) \, v(i) = f(i)$ for each $i \in I$, and this yields $u(i), v(i) \in H_i^\times$ (by the hypothesis that $H_i$ is acyclic). Thus $u, v \in H^\times$, and we see that $H$ is acyclic (as wished).
\end{proof}
Incidentally, Proposition \ref{prop:BF-implies-acyclic} is a strengthening of \cite[Corollary 2.29]{Fa-Tr18}, with ``acyclic'' replacing ``unit-cancellative'' (recall from Remark \ref{rem:history} that any unit-cancellative monoid is acyclic, but not conversely).

With the above in place, we now turn to prove the main results of this section (Corollaries \ref{cor:lfgu-acyclic-mons-are-atomic} and \ref{cor:comm-unit-canc-lfgu-mons-are-FF}). 
We start with some elementary properties of acyclic monoids.

\begin{lemma}\label{lem:dedekind-finite}
	Let $H$ be an acyclic monoid. The following hold:
	\begin{enumerate}[label=\textup{(\roman{*})}]
		\item\label{lem:dedekind-finite(i)} If $x_1 \cdots x_n \in H^\times$ for some $x_1, \ldots, x_n \in H$, then $x_i \in H^\times$ for each $i \in \llb 1, n \rrb$.
		\item\label{lem:dedekind-finite(ii)} Every element of $H$ of finite order is a unit.
		\item\label{lem:dedekind-finite(iii)} $H$ has no non-trivial idempotent.
	\end{enumerate}
\end{lemma}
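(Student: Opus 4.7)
The three parts should be proved in the order stated, with each building on the tools established for the previous one. The key observation that unlocks everything is that in an acyclic monoid, any idempotent must be $1_H$, and more generally, one-sided invertibility forces two-sided invertibility.

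My first step for \ref{lem:dedekind-finite(i)} is to prove the following sublemma: if $xy = 1_H$ in an acyclic monoid $H$, then $x, y \in H^\times$. Indeed, $(yx)^2 = y(xy)x = yx$, so $yx$ is an idempotent. If $yx \notin H^\times$, then the equality $yx = 1_H \cdot (yx) \cdot (yx)$ directly contradicts acyclicity (take $u := 1_H$, $v := yx$, both applied to the element $yx$). Hence $yx \in H^\times$, which means $y$ has a right inverse; combined with $x$ being its left inverse, the standard monoid argument ($x = x(yz) = (xy)z = z$ whenever $xy = yz = 1_H$) gives that $y$ is a unit with inverse $x$, and $x \in H^\times$ too. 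With this sublemma in hand, \ref{lem:dedekind-finite(i)} follows by induction on $n$: the case $n = 1$ is trivial; for $n \ge 2$, writing $x_1 \cdot (x_2 \cdots x_n u^{-1}) = 1_H$ (where $u := x_1 \cdots x_n$) and applying the sublemma yields $x_1 \in H^\times$, so that $x_2 \cdots x_n = x_1^{-1} u \in H^\times$ and the induction hypothesis finishes the job.

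For \ref{lem:dedekind-finite(ii)}, suppose $\mathrm{ord}_H(x) < \infty$. By the pigeonhole principle there exist integers $m, n$ with $1 \le m < n$ such that $x^m = x^n$, which we rewrite as
\[
x^m = 1_H \cdot x^m \cdot x^{n-m}, \qquad n-m \ge 1.
\]
Acyclicity applied to the element $x^m$ (with $u := 1_H$ and $v := x^{n-m}$) forces $x^{n-m} \in H^\times$, and then part \ref{lem:dedekind-finite(i)} gives $x \in H^\times$.

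For \ref{lem:dedekind-finite(iii)}, let $e \in H$ satisfy $e^2 = e$ and suppose toward a contradiction that $e \ne 1_H$. If $e \in H^\times$, multiplying $e^2 = e$ by $e^{-1}$ gives $e = 1_H$, contradicting our assumption. Hence $e \notin H^\times$. But then the identity $e = 1_H \cdot e \cdot e$ with $e \notin H^\times$ again violates acyclicity, completing the proof. The main conceptual obstacle is really just the sublemma in paragraph one; once that is in place the rest is essentially a sequence of short acyclicity-applications to carefully chosen factorizations.
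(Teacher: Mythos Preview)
Your proof is correct. Part \ref{lem:dedekind-finite(ii)} is essentially identical to the paper's argument. Parts \ref{lem:dedekind-finite(i)} and \ref{lem:dedekind-finite(iii)} take a different route, though. For \ref{lem:dedekind-finite(i)} the paper simply observes that acyclic monoids are unit-cancellative (Remark~\ref{rem:history}) and then cites \cite[Lemma~2.27(i)]{Fa-Tr18}, which handles the result for unit-cancellative monoids in general; you instead prove it from scratch via the sublemma that one-sided inverses are two-sided (using that $yx$ is idempotent and then invoking acyclicity). For \ref{lem:dedekind-finite(iii)} the paper derives it from \ref{lem:dedekind-finite(ii)} (an idempotent has order at most $2$, hence is a unit, hence equals $1_H$), whereas you argue directly by applying acyclicity to $e = 1_H \cdot e \cdot e$. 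Your approach has the virtue of being fully self-contained, at the cost of a slightly longer argument for \ref{lem:dedekind-finite(i)}; the paper's approach is shorter on the page but outsources the work to an external reference. One small remark: your sublemma in \ref{lem:dedekind-finite(i)} and your proof of \ref{lem:dedekind-finite(iii)} are really the same observation (a non-unit idempotent contradicts acyclicity), so you could have stated that once up front and used it in both places.
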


\begin{proof}
	\ref{lem:dedekind-finite(i)} This is a special case of \cite[Lemma 2.27(i)]{Fa-Tr18}, because an acyclic monoid is unit-cancellative (as already observed in Remark \ref{rem:history}). 
	
	\vskip 0.1cm
	\ref{lem:dedekind-finite(ii)} Let $x \in H$ be an element of finite order. Then $x^n = x^m$ for some $m, n \in \mathbf N^+$ with $m < n$. Since $H$ is acyclic, this implies that $x^{n-m}$ is a unit of $H$, which, by part \ref{lem:dedekind-finite(i)}, is only possible if $x \in H^\times$.
	
	\vskip 0.1cm
	\ref{lem:dedekind-finite(iii)} Let $x \in H$ be an idempotent (namely, $x^2 = x$). Then $\text{ord}_H(x) \le 2$, and we get from part \ref{lem:dedekind-finite(ii)} that $x$ is a unit. It follows that $1_H = x^{-1}x = x^{-1} x^2 = x$, and hence $H$ has no non-trivial idempotents.
\end{proof}

\begin{theorem}\label{lem:acyclic-fgu-is-atomic}
	Assume $H$ is an acyclic f.g.u. monoid. Then there exists a finite subset $\bar{A}$ of $\mathscr A(H)$ such that
	$
	H \setminus H^\times = \mathrm{Sgp}\langle H^\times \bar{A} H^\times \rangle_H$. In particular, $H$ is atomic.
\end{theorem}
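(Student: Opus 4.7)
The plan is to apply Lemma \ref{lem:minimal-generating-set} to trim down a finite generating set (up to units) for the non-units of $H$ to a ``minimal'' one, and then use acyclicity to show that every element of this minimal set must be an atom.

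First, since $H$ is f.g.u., I would pick a finite $A \subseteq H$ with $H \setminus H^\times \subseteq \mathrm{Sgp}\langle H^\times A H^\times \rangle_H$, and then apply Lemma \ref{lem:minimal-generating-set}\ref{lem:minimal-generating-set(ii)} (with $Q = H^\times$) to replace $A$ by $A \setminus H^\times$, so that $A \subseteq H \setminus H^\times$. Next, I would invoke Lemma \ref{lem:minimal-generating-set}\ref{lem:minimal-generating-set(i)} to extract a subset $\bar A \subseteq A$ such that $\mathrm{Sgp}\langle H^\times \bar A H^\times \rangle_H = \mathrm{Sgp}\langle H^\times A H^\times \rangle_H$ and, crucially, $a \notin \mathrm{Sgp}\langle H^\times \bar A H^\times \setminus H^\times a H^\times \rangle_H$ for every $a \in \bar A$.

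The core of the proof is to show that $\bar A \subseteq \mathscr A(H)$. Suppose, toward a contradiction, that some $a \in \bar A$ is not an atom: then $a = xy$ with $x, y \in H \setminus H^\times$. Each of $x$ and $y$ is a product of elements of $H^\times \bar A H^\times$, so I can write $a = w_1 b_1 z_1 \cdots w_n b_n z_n$ with $n \ge 2$, $w_i, z_i \in H^\times$, and $b_i \in \bar A$. By the minimality of $\bar A$, there must exist some index $j \in \llb 1, n \rrb$ with $b_j \in H^\times a H^\times$, say $b_j = s a t$ for units $s,t$; grouping $w_j s$ and $t z_j$ gives $a = u \cdot a \cdot v$, where
\[
u = w_1 b_1 z_1 \cdots w_{j-1} b_{j-1} z_{j-1} (w_j s)
\quad\text{and}\quad
v = (t z_j) w_{j+1} b_{j+1} z_{j+1} \cdots w_n b_n z_n.
\]
Because $n \ge 2$, either $j > 1$ or $j < n$. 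If $j > 1$ then $u$ has $b_1 \in \bar A \subseteq H \setminus H^\times$ as a factor, while every other factor is either a unit or a non-unit atom; since $H$ is acyclic and hence unit-cancellative (Remark \ref{rem:history}), Lemma \ref{lem:2.4}\ref{lem:2.4(ii)} forces $u \notin H^\times$. Symmetrically, if $j < n$ then $v \notin H^\times$. Either way, $a = uav$ with $u \notin H^\times$ or $v \notin H^\times$, contradicting the acyclicity of $H$. Hence every $a \in \bar A$ is an atom, i.e., $\bar A \subseteq \mathscr A(H)$.

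Finally, for the ``in particular'' clause, Lemma \ref{lem:2.4}\ref{lem:2.4(i)} gives $H^\times \bar A H^\times \subseteq \mathscr A(H)$, so $\mathrm{Sgp}\langle H^\times \bar A H^\times \rangle_H \subseteq \mathrm{Sgp}\langle \mathscr A(H) \rangle_H$, and since by construction $H \setminus H^\times \subseteq \mathrm{Sgp}\langle H^\times \bar A H^\times \rangle_H$, the reverse inclusion of the desired identity is automatic from $\bar A \subseteq H \setminus H^\times$ (arguing as in the proof of Lemma \ref{lem:atoms-of-atomic-fgu-mon} using Lemma \ref{lem:2.4}\ref{lem:2.4(ii)}), and atomicity then follows from Proposition \ref{prop:atomic-iff-min-atomic}. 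The main obstacle is the bookkeeping in the $a = uav$ step — ensuring that $n \ge 2$ really is forced (so that at least one ``side'' genuinely contains a non-unit atom), which is where the assumption that $a$ factors as a product of \emph{two} non-units is used in an essential way.
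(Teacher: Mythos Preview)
Your proof is correct and follows essentially the same route as the paper's: trim a finite generating set to a minimal $\bar A$ via Lemma~\ref{lem:minimal-generating-set}, assume some $a\in\bar A$ is not an atom, factor it through $\bar A$ with at least two terms, locate a copy of $a$ among those terms, and reach a contradiction with acyclicity. Two cosmetic remarks: the phrase ``non-unit atom'' for the $b_i$'s is premature (you have not yet shown $\bar A\subseteq\mathscr A(H)$; all you use, correctly, is $b_i\notin H^\times$), and your endgame is simply the contrapositive of the paper's --- you argue one side of $a=uav$ is a non-unit and then invoke acyclicity, whereas the paper invokes acyclicity first to force both sides to be units and then contradicts $x,y\notin H^\times$ via Lemma~\ref{lem:dedekind-finite}\ref{lem:dedekind-finite(i)}.
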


\begin{proof}
	Since $H$ is an f.g.u. monoid, $H \setminus H^\times \subseteq \mathrm{Sgp}\langle H^\times A H^\times \rangle_H$ for some finite set $A \subseteq H$; and since $H^\times$ is a subgroup of $H$, we deduce from Lemma \ref{lem:minimal-generating-set} that there is a set $\bar{A} \subseteq A \setminus H^\times$ such that 
	\begin{equation}\label{equ:maximal-ideal-is-contained-in-special-subSgp}
	H \setminus H^\times \subseteq \mathrm{Sgp}\langle H^\times \bar{A} H^\times \rangle_H
	\end{equation}
	and
	\begin{equation}\label{equ:minimality-of-generating-set}
	a \notin \mathrm{Sgp}\langle H^\times \bar{A} H^\times \setminus H^\times aH^\times \rangle_H,
	\quad
	\text{for every }a \in \bar{A}.
	\end{equation} 
	We claim $\bar{A} \subseteq \mathscr A(H)$. So, suppose to the contrary that $\bar{A}$ has an element $\bar{a}$ that is not an atom of $H$. Then $\bar{a} = xy$ for some $x, y \in H \setminus H^\times$ (since $\bar A$ and $H^\times$ are disjoint, $\bar a$ cannot be a unit), and it follows by \eqref{equ:maximal-ideal-is-contained-in-special-subSgp} that there are $h, k \in \mathbf N^+$, $u_1, \ldots, u_{h+k}, v_1, \ldots, v_{h+k} \in H^\times$, and $a_1, \ldots, a_{h+k} \in \bar{A}$ for which
	\begin{equation}\label{equ:decomposition-of-x&y}
	x = u_1 a_1 v_1 \cdots u_h a_h v_h
	\quad\text{and}\quad
	y = u_{h+1} a_{h+1} v_{h+1} \cdots u_{h+k} a_{h+k} v_{h+k}.
	\end{equation}
	In turn, this yields that $a_{\hat\imath} = \bar{a}$ for some $\hat\imath \in \llb 1, h+k \rrb$, or else we would have that
	$$
	\bar{a} = xy = u_1 a_1 v_1 \cdots u_{h+k} a_{h+k} v_{h+k} \in \mathrm{Sgp} \langle H^\times \bar{A} H^\times \setminus H^\times \bar{a} H^\times \rangle_H, 
	$$
	in contradiction to \eqref{equ:minimality-of-generating-set}. Consequently, we find that 
	$
	\bar{a} = u u_{\hat\imath} \bar{a} v_{\hat\imath} v$, where 
	\[
	u := \prod_{1 \le i < \hat\imath} u_i a_i v_i
	\quad\text{and}\quad
	v :=  \prod_{\hat\imath < i \le h+k} u_i a_i v_i.
	\]
	Thus $u u_{\hat\imath}, v_{\hat\imath}v \in H^\times$ (because $H$ is acyclic), which implies, by Lemma \ref{lem:dedekind-finite}\ref{lem:dedekind-finite(i)}, that 
	\[
	u_i, a_i, v_i \in H^\times, \quad\text{for every }i \in \llb 1, h+k \rrb \setminus \{\hat\imath\}. 
	\]
	Together with \eqref{equ:decomposition-of-x&y}, this in turn shows that $x = \prod_{i=1}^h u_i a_i v_i \in H^\times$ if $h < \hat\imath$, and $y = \prod_{i=h+1}^{h+k} u_i a_i v_i \in H^\times$ if $\hat\imath \le h$. 
	So we got a contradiction (by hypothesis, neither $x$ nor $y$ is a unit of $H$), and we conclude that $\bar{A}$ is contained in $\mathscr A(H)$ (as wished).
	
	By \eqref{equ:maximal-ideal-is-contained-in-special-subSgp}, it only remains to check that $\mathrm{Sgp}\langle H^\times \bar{A} H^\times \rangle_H \subseteq H \setminus H^\times$, which is however straightforward, by the fact that $uav \in \mathscr A(H)$ for all $u, v \in H^\times$ and $a \in \mathscr A(H)$, see Lemma \ref{lem:2.4}\ref{lem:2.4(i)}.
\end{proof}
We note in passing that Theorem \ref{lem:acyclic-fgu-is-atomic} is a generalization (from cancellative commutative f.g.u. monoids to acyclic f.g.u. monoids) of \cite[Proposition 1.1.7.2]{GeHK06}.

\begin{corollary}
\label{cor:lfgu-acyclic-mons-are-atomic}
	Every acyclic l.f.g.u. monoid is \textup{FmF} \textup{(}and in particular atomic\textup{)}.
\end{corollary}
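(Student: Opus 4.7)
The plan is to reduce the claim to an application of Theorem~\ref{lem:acyclic-fgu-is-atomic} on each one-generated divisor-closed submonoid, and then invoke Corollary~\ref{cor:lfgu-is-FmF} to upgrade atomicity to FmF. Let $H$ be an acyclic l.f.g.u.\ monoid; since Corollary~\ref{cor:lfgu-is-FmF} already gives the implication ``atomic l.f.g.u.\ $\Rightarrow$ FmF'', the whole task is to prove that $H$ is atomic.

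To show $H$ is atomic, I would use Proposition~\ref{prop:divisor-closed-sub}\ref{it:prop:divisor-closed-sub(iii)}, which reduces atomicity of $H$ to atomicity of $\llangle x \rrangle_H$ for every $x \in H$. So fix $x \in H$ and let $M := \llangle x \rrangle_H$. By the l.f.g.u.\ hypothesis, $M$ is f.g.u., so to apply Theorem~\ref{lem:acyclic-fgu-is-atomic} it only remains to verify that $M$ itself is acyclic.

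The natural tool here is Proposition~\ref{prop:new-from-old}\ref{it:prop:new-from-old(ii)}: since $M$ is a submonoid of the acyclic monoid $H$, it suffices to check $M \cap H^\times \subseteq M^\times$. But Proposition~\ref{prop:divisor-closed-sub}\ref{it:prop:divisor-closed-sub(i)} tells us $M^\times = H^\times$ for any divisor-closed submonoid, and in particular $H^\times \subseteq M$ and $M \cap H^\times = H^\times = M^\times$. Thus $M$ is acyclic, and Theorem~\ref{lem:acyclic-fgu-is-atomic} applied to $M$ yields that $M$ is atomic.

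There is no real obstacle: the corollary is essentially a packaging of Theorem~\ref{lem:acyclic-fgu-is-atomic}, Proposition~\ref{prop:divisor-closed-sub}, Proposition~\ref{prop:new-from-old}\ref{it:prop:new-from-old(ii)}, and Corollary~\ref{cor:lfgu-is-FmF}. The only point that requires any care is the verification that acyclicity is inherited by divisor-closed submonoids, which is immediate once one observes the coincidence $M^\times = H^\times$ from Proposition~\ref{prop:divisor-closed-sub}\ref{it:prop:divisor-closed-sub(i)}.
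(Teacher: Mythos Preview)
Your argument is correct and follows essentially the same route as the paper's proof, which simply cites Proposition~\ref{prop:divisor-closed-sub}\ref{it:prop:divisor-closed-sub(iii)}, Theorem~\ref{lem:acyclic-fgu-is-atomic}, and Corollary~\ref{cor:lfgu-is-FmF} without further comment. The only difference is that you make explicit the step that each $\llangle x \rrangle_H$ inherits acyclicity from $H$ (via Proposition~\ref{prop:new-from-old}\ref{it:prop:new-from-old(ii)} together with $M^\times = H^\times$), a detail the paper leaves to the reader.
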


\begin{proof}
	This is an immediate consequence of Proposition \ref{prop:divisor-closed-sub}\ref{it:prop:divisor-closed-sub(iii)}, Theorem \ref{lem:acyclic-fgu-is-atomic}, and Corollary \ref{cor:lfgu-is-FmF}.
\end{proof}
The next result is a strengthening (with ``FF'' replacing ``BF'') of \cite[Proposition 3.4]{FGKT}.

\begin{corollary}\label{cor:comm-unit-canc-lfgu-mons-are-FF}
	Every commutative, acyclic, l.f.g.u. monoid is \textup{FF}.
\end{corollary}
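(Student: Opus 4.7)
The plan is to combine the FmF conclusion already available from Corollary \ref{cor:lfgu-acyclic-mons-are-atomic} with Theorem \ref{th:lfgu-BF-is-FF}\ref{it:th:lfgu-BF-is-FF(i)}: since \textup{FF} is equivalent to \textup{BF} for l.f.g.u. monoids, it suffices to prove that $H$ is \textup{BF}, i.e., that $\mathsf L_H(x)$ is finite for every $x \in H$. The strategy is to show the stronger identity $\mathsf L_H(x) = \mathsf L_H^{\sf m}(x)$, using commutativity and acyclicity to force every factorization to have the same length as some $\preceq_H$-minimal one. Once this is established, the conclusion is immediate because $H$ is FmF, so $\mathsf Z_H^{\sf m}(x)$, and hence $\mathsf L_H^{\sf m}(x)$, is finite for every $x \in H$.

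For the main lemma, let $\mathfrak a \in \mathcal Z_H(x)$ be arbitrary and obtain a $\preceq_H$-minimal factorization $\mathfrak b \in \mathcal Z_H^{\sf m}(x)$ with $\mathfrak b \preceq_H \mathfrak a$ by descent on length: if $\mathfrak a' \prec_H \mathfrak a$, then $\|\mathfrak a'\|_{\mathscr A(H)} < \|\mathfrak a\|_{\mathscr A(H)}$, since equal lengths would force the injection in the definition of $\preceq_H$ to be a bijection and thereby make $\preceq_H$-comparability symmetric, contradicting $\prec_H$. Writing $\mathfrak a = a_1 \ast \cdots \ast a_n$, $\mathfrak b = b_1 \ast \cdots \ast b_m$ with an injection $\sigma\colon \llb 1, m \rrb \to \llb 1, n \rrb$ and $b_i = u_i a_{\sigma(i)} v_i$ for units $u_i, v_i \in H^\times$, commutativity gives
\[
x = \pi_H(\mathfrak b) = w \cdot y \qquad\text{and}\qquad x = \pi_H(\mathfrak a) = y \cdot z,
\]
where $y := \prod_{i=1}^m a_{\sigma(i)}$, $z := \prod_{i \in \llb 1, n \rrb \setminus \sigma(\llb 1, m \rrb)} a_i$, and $w := \prod_{i=1}^m u_i v_i \in H^\times$.

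Combining these, $y = w^{-1} y z$, and acyclicity (Definition \ref{def:acyclic}) applied to $u = w^{-1}$, $v = z$ forces $z \in H^\times$. But $z$ is a product of atoms of $H$, and by Lemma \ref{lem:dedekind-finite}\ref{lem:dedekind-finite(i)} a non-empty product of atoms in an acyclic monoid cannot be a unit (atoms are never units). Hence the product defining $z$ is empty, i.e., $\sigma$ is a bijection, $m = n$, and $\|\mathfrak a\|_{\mathscr A(H)} = \|\mathfrak b\|_{\mathscr A(H)} \in \mathsf L_H^{\sf m}(x)$. Thus $\mathsf L_H(x) \subseteq \mathsf L_H^{\sf m}(x)$, and since the reverse inclusion is tautological, equality holds.

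The main obstacle is the algebraic step producing $z \in H^\times$: it is essential that commutativity allow us to collect the unit conjugators $u_i, v_i$ on one side, so that acyclicity can be invoked with $y$ in the central position; without commutativity, the $u_i$ and $v_i$ remain interleaved between the $a_{\sigma(i)}$'s and the reduction to the form $y = w^{-1} y z$ breaks down. This is precisely why the conclusion is stated for commutative (and not merely acyclic) l.f.g.u. monoids. With that step in hand, finiteness of $\mathsf L_H(x)$ follows from FmF, and Theorem \ref{th:lfgu-BF-is-FF}\ref{it:th:lfgu-BF-is-FF(i)} completes the proof.
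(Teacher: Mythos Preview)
Your argument is correct and rests on the same core idea as the paper: in a commutative acyclic monoid every factorization is $\preceq_H$-minimal, so that FmF (from Corollary~\ref{cor:lfgu-acyclic-mons-are-atomic}) immediately yields FF. The paper obtains this fact by citing \cite[Proposition 4.6(v)]{An-Tr18} (after noting via Remark~\ref{rem:history} that ``commutative acyclic'' coincides with ``commutative unit-cancellative''), whereas you prove it directly via the manoeuvre $y = w^{-1}yz$; note, however, that your computation already shows $\mathfrak b \preceq_H \mathfrak a \Rightarrow \|\mathfrak b\| = \|\mathfrak a\|$ for \emph{any} $\mathfrak b$ (minimality of $\mathfrak b$ is never used), so every $\mathfrak a$ is itself $\preceq_H$-minimal and $\mathsf Z_H(x) = \mathsf Z_H^{\sf m}(x)$ is finite---making both the descent step and the detour through BF and Theorem~\ref{th:lfgu-BF-is-FF}\ref{it:th:lfgu-BF-is-FF(i)} unnecessary.
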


\begin{proof}
	From Corollary \ref{cor:lfgu-acyclic-mons-are-atomic}, every  acyclic l.f.g.u. monoid is FmF; and from Remark \ref{rem:history}, a commutative monoid is acyclic if and only if it is unit-cancellative. So we are done, since all factorizations (into atoms) in a commutative unit-cancellative monoid are minimal, see \cite[Proposition 4.6(v)]{An-Tr18}.
\end{proof}
Now we show, as a complement to Theorem \ref{lem:acyclic-fgu-is-atomic}, that cancellative f.g. monoids, on the one hand, need not be atomic; and, on the other hand, can be atomic without being acyclic (we invite the reader to review \S\S{ }\ref{sec:generalities} and \ref{sec:presentations} before reading further).

\begin{example}
\label{exa:non-atomic-2-generator-1-relator-canc-mon}
	Fix $n \in \mathbf N^+$, and let $H$ be the monoid defined by the presentation $\mathrm{Mon}\langle X \mid R \rangle$, where $X$ is the $2$-element set $\{x, y\}$ and $R := \{(x^{\ast n}, y * x^{\ast n} * y)\} \subseteq \mathscr F(X) \times \mathscr F(X)$. 
	
	Of course, $H$ is f.g., and since the presentation $\mathrm{Mon}\langle X \mid R \rangle$ is finite and its left and right graphs are cycle-free, we get from Adian's embedding theorem (viz., Theorem \ref{th:adian-theorem}) that $H$ embeds into a group and is therefore cancellative. Also, it is straightforward that $H$ is reduced, $\{y\} \subseteq \mathscr A(H) \subseteq X$, and $x \notin \langle y \rangle_H$. Accordingly, we see that $H$ is atomic if and only if $\mathscr A(H) = X$, and the latter holds if and only if $n \ge 2$ (if $n = 1$, then $x \equiv y \ast x \ast y \bmod R^\sharp$, implying that $x$ is not an atom of $H$). 
	
Notice that $H$ is not acyclic and, hence, Corollary \ref{cor:lfgu-acyclic-mons-are-atomic} does not apply. However, it follows from the above and Corollary \ref{cor:lfgu-is-FmF} that, for $n \ge 2$, $H$ is FmF, although not BF (in fact, a routine induction shows that $x^{\ast n} \equiv y^{\ast k} \ast x^{\ast n} \ast y^{\ast k} \bmod R^\sharp$, and hence $n+2k \in \mathsf L_H(x^{\ast n})$, for every $k \in \mathbf N$).
\end{example}
Finally, we prove that acyclic f.g. monoids, although atomic (by Corollary \ref{cor:lfgu-acyclic-mons-are-atomic}), need not be BF, not even under the additional condition of being cancellative and reduced (in stark contrast to what happens in the commutative case, cf. \cite[Proposition 2.7.8.4]{GeHK06} and Corollary \ref{cor:comm-unit-canc-lfgu-mons-are-FF}): Note that one difficulty in the construction of a monoid with these characteristics lies in the fact that, by Lemma \ref{lem:dedekind-finite}\ref{lem:dedekind-finite(iii)}, acyclic monoids have no non-trivial idempotents. 
\begin{example}
\label{exa:atomic-2-gen-1-rel-canc-monoid}
Let $X$ be the $4$-element set $\{w,x,y,z\}$, and for each $k \in \mathbf N$ let $\mathfrak a_k$ denote the $X$-word $x \ast y^{\ast k} \ast z$.
Then take $H$ to be the monoid defined by the presentation $\mathrm{Mon}\langle X \mid R \rangle$, where 
$$
R := \{(\mathfrak a_k, y \ast \mathfrak a_{k+1} \ast w): k \in \mathbf N\} \subseteq \mathscr F(X) \times \mathscr F(X). 
$$
Since $\|\mathfrak a_k\|_X = k + 2 \ge 2$ for every $k \in \mathbf N$, it is routine to check that $H$ is a reduced monoid with $\mathscr A(H) = X$. Moreover, $\mathsf L_H(\mathfrak a_0)$ contains the set $\{3k: k \in \mathbf N^+\}$, because we have (by induction) that
\[
\mathfrak a_0 \equiv y^{\ast (k-1)} \ast \mathfrak a_k \ast w^{\ast (k-1)} \bmod R^\sharp,
\quad\text{for every }k \in \mathbf N^+. 
\]
Thus $H$ is atomic but not BF, and we are going to show that it is also cancellative and acyclic. 

So, denote by $\psi$ the function $\mathscr F(X) \to \mathbf N$ that maps an $X$-word $\mathfrak z$ to the supremum of the set of all $\ell \in \mathbf N^+$ for which there exist $\hat{\mathfrak z}_0, \ldots, \hat{\mathfrak z}_\ell \in \mathscr F(X)$ and $s_1, \ldots, s_\ell \in \mathbf N$ such that
\[
\mathfrak z = \hat{\mathfrak z}_0 \ast \mathfrak a_{s_1} \ast \hat{\mathfrak z}_1 \ast \cdots \ast \hat{\mathfrak z}_{\ell-1} \ast \mathfrak a_{s_\ell} \ast \hat{\mathfrak z}_\ell = \Conv_{i=1}^\ell (\hat{\mathfrak z}_{i-1} \ast \mathfrak a_{s_i} \ast \hat{\mathfrak z}_i),
\]
where we make the convention that $\sup \emptyset := 0$. Since 
\begin{equation}
\label{equ:omega-of-defining-rels}
\psi(\mathfrak a_k) = \psi(y \ast \mathfrak{a}_{k+1} \ast w) = 1,
\quad\text{for every }
k \in \mathbf N, 
\end{equation}
it is easily seen that
\begin{equation}
\label{equ:omega-of-congruent-words}
\psi(\mathfrak b) = \psi(\mathfrak c), \quad\text{for all }\mathfrak b, \mathfrak c \in \mathscr F(X) \ \text{with } \mathfrak b \equiv \mathfrak c \bmod R^\sharp.
\end{equation}
Let $\mathfrak z \in \mathscr F(X)$ and set $\ell := \psi(\mathfrak z)$. If $\ell$ is not zero, it follows from the above that there are determined $r_1, s_1, t_1, \ldots, r_\ell, s_\ell, t_\ell \in \mathbf N$ and $\mathfrak z_0, \ldots, \mathfrak z_\ell \in \mathscr F(X)$ such that $y$ is not the right-most letter of any of the words $\mathfrak z_0, \ldots, \mathfrak z_{\ell-1}$; $w$ is not the left-most letter of any of the words $\mathfrak z_1, \ldots, \mathfrak z_\ell$; and $\mathfrak z$ can be written as
\begin{equation}
\label{equ:canonical-decomposition}
\mathfrak z_0 \ast y^{\ast r_1} \ast \mathfrak a_{s_1} \ast w^{\ast t_1} \ast \mathfrak z_1 \ast \cdots \ast \mathfrak z_{\ell-1} \ast y^{\ast r_\ell} \ast \mathfrak a_{s_\ell} \ast w^{\ast t_\ell} \ast \mathfrak z_\ell.
\end{equation}
In addition, it is clear from the definition of $\psi$ that the words $\mathfrak z_0, \ldots, \mathfrak z_\ell$ in this representation satisfy
\begin{equation}\label{equ:19}
\psi(\mathfrak z_i) = 0, \quad \text{for every }i \in \llb 0, \ell \rrb.
\end{equation}
Accordingly, we say $\mathfrak z$ is \emph{normal} (with respect to the presentation $H$) if either $\ell$ is zero, or $\ell$ is non-zero and, for each $i \in \llb 1, \ell \rrb$, at least one of $r_i$, $s_i$, and $t_i$ in the decomposition \eqref{equ:canonical-decomposition} of $\mathfrak z$ is zero. 

Every $X$-word $\mathfrak z$ is $R^\sharp$-congruent to a unique normal $X$-word, which we denote by $\underline{\mathfrak z}$ and refer to as the \emph{normal form} of $\mathfrak z$ (relative to the presentation $H$). Indeed, set $\ell := \psi(\mathfrak z)$. If $\ell = 0$, then we obtain from \eqref{equ:omega-of-defining-rels} that $\underline{\mathfrak z} = \{\mathfrak z\}$, and there is nothing left to prove. Otherwise, assuming $\mathfrak z$ to be written as in \eqref{equ:canonical-decomposition}, it is  found that an $X$-word $\mathfrak y$ is congruent to $\mathfrak z$ modulo $R^\sharp$ if and only if $\mathfrak y$ can be written as
\[
\mathfrak z_0 \ast y^{\ast (r_1 + k_1)} \ast \mathfrak a_{s_1 + k_1} \ast w^{\ast (t_1 + k_1)} \ast \mathfrak z_1 \ast \cdots \ast \mathfrak z_{\ell-1} \ast y^{\ast (r_\ell+k_\ell)} \ast \mathfrak a_{s_\ell+k_\ell} \ast w^{\ast (t_\ell+k_\ell)} \ast \mathfrak z_\ell,
\]
where $k_1, \ldots, k_\ell \in \mathbf Z$ and $k_i \ge - \min(r_i, s_i, t_i)$ for each $i \in \llb 1, \ell \rrb$: The proof is by induction on the length of an $R$-chain from $\mathfrak z$ to $\mathfrak y$. The induction basis is trivial (an $R$-chain of length $0$ from $\mathfrak z$ to $\mathfrak y$ implies $\mathfrak z = \mathfrak y$). The inductive step comes down to observing that, if an $X$-word of the form 
\[
\mathfrak z_0 \ast y^{\ast \alpha_1} \ast \mathfrak a_{\beta_1} \ast w^{\ast \gamma_1} \ast \mathfrak z_1 \ast \cdots \ast \mathfrak z_{\ell-1} \ast y^{\ast \alpha_\ell} \ast \mathfrak a_{\beta_\ell} \ast w^{\ast \gamma_\ell} \ast \mathfrak z_\ell,
\]
with $\alpha_i, \beta_i, \gamma_i \in \mathbf N$ for each $i \in \llb 1, \ell \rrb$, factors as $\mathfrak p \ast \mathfrak q \ast \mathfrak r$ for some $\mathfrak p, \mathfrak q, \mathfrak r \in \mathscr F(X)$ such that $\mathfrak q$ is a defining relation of $H$, then there exist $d \in \llb 1, \ell \rrb$ and $k \in \llb 0, \min(1, \alpha_d, \gamma_d) \rrb$ for which
\[
\textstyle \mathfrak p = \mathfrak z_0 \ast \mathfrak p^\prime \ast y^{\ast (\alpha_d - k)}, 
\quad \mathfrak q = y^{\ast k} \ast \mathfrak a_{\beta_d} \ast w^{\ast k}, 
\quad\text{and}\quad
\mathfrak r = w^{\ast (\gamma_{d} - k)} \ast \mathfrak r^\prime \ast \mathfrak z_\ell,
\]
where 
\[ 
\mathfrak p^\prime := \Conv_{i=1}^{d-1}  (y^{\ast \alpha_i} \ast \mathfrak a_{\beta_i} \ast w^{\ast \gamma_i} \ast \mathfrak z_i)
\quad\text{and}\quad
\mathfrak r^\prime := \Conv_{i=d+1}^{\ell\;} (\mathfrak z_{i-1} \ast y^{\ast \alpha_{i}} \ast \mathfrak a_{\beta_{i}} \ast w^{\ast \gamma_{i}});
\]
in particular, note that $\psi(y^{\ast \alpha} \ast \mathfrak a_\beta \ast w^{\ast \gamma}) = 1$ for all $\alpha, \beta, \gamma \in \mathbf N$, while it is immediate from \eqref{equ:19} that
\[
\psi(\mathfrak z_{i-1} \ast y^{\ast \alpha_i} \ast x \ast y^{\ast \beta_i}) = \psi(y^{\ast \beta_i} \ast z \ast w^{\ast \gamma_i} \ast \mathfrak z_i) = 0,
\quad\text{for each }i \in \llb 1, \ell \rrb.
\]
Knowing now that every element of $H$ has a unique normal form, we proceed to prove a series of claims.

\begin{claim}\label{claim:A}
Let $\mathfrak z \in \mathscr F(X)$ and $a \in X$, and assume $\mathfrak z$ is normal but $a \ast \mathfrak z$ \textup{(}resp., $\mathfrak z \ast a$\textup{)} is not. Then $a = y$ \textup{(}resp., $a = w$\textup{)} and there exist $s, t \in \mathbf N^+$ and $\mathfrak z^\prime \in \mathscr F(X)$ such that 
$\mathfrak z = \mathfrak a_s \ast w^{\ast t} \ast \mathfrak z^\prime$ \textup{(}resp., $\mathfrak z = \mathfrak z^\prime \ast y^{\ast s} \ast \mathfrak a_t$\textup{)} and $\mathfrak z^\prime$ is a normal word whose left-most letter is not equal to $w$ \textup{(}resp.,  $y$\textup{)}; moreover, the normal form of $a \ast \mathfrak z$ \textup{(}resp., $\mathfrak z \ast a$\textup{)} is the $X$-word $\mathfrak a_{s-1} \ast w^{\ast (t-1)} \ast \mathfrak z^\prime$ \textup{(}resp., $\mathfrak z^\prime \ast y^{\ast (s-1)} \ast \mathfrak a_{t-1}$\textup{)}.
\end{claim}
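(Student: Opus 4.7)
The plan is to analyze how the canonical decomposition of $a \ast \mathfrak{z}$ relates to that of $\mathfrak{z}$, use this analysis to isolate the unique configuration in which prepending a letter destroys normality, and then invert the defining relation to identify the normal form. I will write $\mathfrak{z}$ in canonical form
\[
\mathfrak{z} = \mathfrak{z}_0 \ast y^{\ast r_1} \ast \mathfrak{a}_{s_1} \ast w^{\ast t_1} \ast \mathfrak{z}_1 \ast \cdots \ast \mathfrak{z}_{\ell-1} \ast y^{\ast r_\ell} \ast \mathfrak{a}_{s_\ell} \ast w^{\ast t_\ell} \ast \mathfrak{z}_\ell,
\]
with $\ell := \psi(\mathfrak{z})$, and split into cases depending on whether $\mathfrak{z}_0$ is empty and on whether prepending $a$ creates a new $\mathfrak{a}_k$-subword. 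Since $\mathfrak{a}_k = x \ast y^{\ast k} \ast z$, the latter can happen only when $a = x$ and the left end of $\mathfrak{z}_0$ has the form $y^{\ast m} \ast z$.

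First I would dispose of three configurations in which $a \ast \mathfrak{z}$ is forced to be normal, thus contradicting the hypothesis. If $\mathfrak{z}_0 \ne \emptyset$ and $a \ast \mathfrak{z}_0$ contains no new $\mathfrak{a}_k$, then the canonical decomposition of $a \ast \mathfrak{z}$ simply replaces $\mathfrak{z}_0$ by $a \ast \mathfrak{z}_0$ (whose right-most letter equals that of $\mathfrak{z}_0$, hence is not $y$) while keeping all exponents $r_i, s_i, t_i$, so normality is inherited. If $\mathfrak{z}_0 \ne \emptyset$ and prepending $a = x$ does create a new $\mathfrak{a}_k$, then $\mathfrak{z}_0 = y^{\ast m} \ast z \ast w^{\ast j} \ast \tilde{\mathfrak{z}}_0$ for some $m, j \ge 0$ and some $\tilde{\mathfrak{z}}_0$ (possibly empty) that neither starts with $w$ nor ends with $y$; the new first block then has exponents $(0, m, j)$, and blocks $2, 3, \ldots$ of $a \ast \mathfrak{z}$ coincide with blocks $1, 2, \ldots$ of $\mathfrak{z}$, preserving normality. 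Finally, if $\mathfrak{z}_0 = \emptyset$ and $a \ne y$, the new $\mathfrak{z}'_0$ is the one-letter word $a$, whose right-most letter is not $y$, and normality is again inherited.

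All remaining scenarios collapse to $\mathfrak{z}_0 = \emptyset$ and $a = y$. Here prepending $y$ raises $r_1$ by one while leaving $s_1, t_1$ unchanged, so the new block-$1$ normality criterion $\min(r_1+1, s_1, t_1) = 0$ fails precisely when $s_1, t_1 \ge 1$; combined with block-$1$ normality of $\mathfrak{z}$ (which forces $r_1 = 0$, $s_1 = 0$, or $t_1 = 0$), this pins down $r_1 = 0$ and $s_1, t_1 \ge 1$. Setting $s := s_1$, $t := t_1$, and $\mathfrak{z}' := \mathfrak{z}_1 \ast y^{\ast r_2} \ast \mathfrak{a}_{s_2} \ast w^{\ast t_2} \ast \cdots \ast \mathfrak{z}_\ell$ yields $\mathfrak{z} = \mathfrak{a}_s \ast w^{\ast t} \ast \mathfrak{z}'$. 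Because the canonical decomposition of $\mathfrak{z}'$ is exactly blocks $2, \ldots, \ell$ of that of $\mathfrak{z}$, the word $\mathfrak{z}'$ is normal; and its left-most letter (when non-empty) cannot be $w$, either by the canonicity constraint on $\mathfrak{z}_1$ if $\mathfrak{z}_1 \ne \emptyset$, or else because the next non-empty block begins with $y$ or $x$.

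For the normal form, I invert the defining relation: since $s \ge 1$, we have $\mathfrak{a}_{s-1} \equiv y \ast \mathfrak{a}_s \ast w \bmod R^\sharp$, so
\[
a \ast \mathfrak{z} \;=\; y \ast \mathfrak{a}_s \ast w \ast w^{\ast (t-1)} \ast \mathfrak{z}' \;\equiv\; \mathfrak{a}_{s-1} \ast w^{\ast (t-1)} \ast \mathfrak{z}' \bmod R^\sharp.
\]
A quick check confirms that the right-hand side is normal: its canonical first block has $r_1'' = 0$, no leading $w$ of $\mathfrak{z}'$ gets absorbed (since $\mathfrak{z}'$ does not start with $w$), and the subsequent blocks are those of the already-normal $\mathfrak{z}'$. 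Uniqueness of the normal form then identifies this word as the normal form of $a \ast \mathfrak{z}$, and the assertions for $\mathfrak{z} \ast a$ follow by the dual argument obtained by reading the canonical decomposition from right to left and interchanging the roles of $y$ and $w$. The main obstacle I anticipate is the bookkeeping in the second easy configuration, $a = x$ with $\mathfrak{z}_0 \ne \emptyset$: one must verify carefully that the $w$'s of $\mathfrak{z}_0$ absorbed into the new $t_1''$-block leave the later blocks and their canonicity constraints intact, so that normality of $\mathfrak{z}$ indeed transfers to normality of $a \ast \mathfrak{z}$.
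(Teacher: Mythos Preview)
Your proof is correct and follows essentially the same route as the paper: both arguments hinge on analyzing how the canonical decomposition of $\mathfrak z$ changes when a single letter is prepended, and both eventually pin down the unique obstruction to normality as $\mathfrak z_0=\varepsilon$, $a=y$, $r_1=0$, $s_1,t_1\ge 1$. The paper organizes this via two proofs by contradiction (first $\ell\ne 0$, then $\mathfrak z_0=\varepsilon$), whereas you run a direct forward case split; the content is the same. One small point worth tightening: your phrasing in case~1 (``whose right-most letter equals that of $\mathfrak z_0$, hence is not $y$'') and in case~4 (``prepending $y$ raises $r_1$ by one'') tacitly uses $\ell\ge 1$, but when $\ell=0$ the conclusion is immediate since $\psi(a\ast\mathfrak z)\le 1$ and the only possible block has $r$-exponent~$0$; the paper disposes of this up front, and you should say a word about it too.
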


\begin{proof}
Set $\ell := \psi(\mathfrak z)$. We will prove the statement for $a \ast \mathfrak z$; the one for $\mathfrak z \ast a$ can be proved in a similar fashion (we will omit the details).
It is obvious that $\psi(a \ast \mathfrak z) \le \ell + 1$; and on the other hand, it is clear that $\psi(a \ast \mathfrak z)$ is not zero, or else $a \ast \mathfrak z$ would be normal. We want to show that $\ell \ne 0$.

Suppose to the contrary that $\ell$ is zero. Then we see from the above that $\psi(a \ast \mathfrak z) = 1$, which in turn implies that $a \ast \mathfrak z = \mathfrak u_0 \ast y^{\ast \alpha} \ast \mathfrak a_\beta \ast w^{\ast \gamma} \ast \mathfrak u_1$ for some $\alpha, \beta, \gamma \in \mathbf N$ and $\mathfrak u_0, \mathfrak u_1 \in \mathscr F(X)$ such that $y$ is not the right-most letter of $\mathfrak u_0$ and $w$ is not the left-most letter of $\mathfrak u_1$. But this is only possible if $\alpha$, $\beta$, and $\gamma$ are \emph{positive} integers, because $a \ast \mathfrak z$ is not normal. Therefore, $\mathfrak u_0 \ast y^\alpha = a \ast \mathfrak u_0^\prime$ for some $\mathfrak u_0^\prime \in \mathscr F(X)$, which is however a contradiction, as it implies that $\psi(\mathfrak z) = \psi(\mathfrak u_0^\prime \ast \mathfrak a_\beta \ast w^{\ast \gamma} \ast \mathfrak u_1) = 1$.

Having established that $\ell \ne 0$ and taking into consideration that $\mathfrak z$ is normal, we can now write $\mathfrak z$ as in \eqref{equ:canonical-decomposition},
with the additional restriction that $r_i s_i t_i = 0$ for each $i \in \llb 1, \ell \rrb$. In particular, this means that 
\begin{equation}\label{equ:rep-of-a*z}
\mathfrak z = \mathfrak z_0 \ast y^{\ast r} \ast \mathfrak a_s \ast w^{\ast t} \ast \mathfrak z^\prime
\end{equation}
for some $r, s, t \in \mathbf N$ and $\mathfrak z_0, \mathfrak z^\prime \in \mathscr F(X)$ such that $rst = \psi(\mathfrak z_0) = 0$, the right-most letter of $\mathfrak z_0$ is not $y$, and $\mathfrak z^\prime$ is a normal $X$-word whose left-most letter is not $w$. We aim to prove that $\mathfrak z_0 = \varepsilon$.

Assume to the contrary that $\mathfrak z_0 \ne \varepsilon$. Then $\psi(a \ast \mathfrak z_0) = 1$, or else it would follow from the above that $a \ast \mathfrak z$ is normal (a contradiction). We thus have $a \ast \mathfrak z_0 = \mathfrak v_0 \ast y^{\ast h} \ast \mathfrak a_j \ast w^{\ast k} \ast \mathfrak v_1$ for some $j, h, k \in \mathbf N$ and $\mathfrak v_0, \mathfrak v_1 \in \mathscr F(X)$ such that $y$ is not the right-most letter of either $\mathfrak v_0$ or $\mathfrak v_1$; $w$ is not the left-most letter of $\mathfrak v_1$; and $\psi(\mathfrak v_0) = \psi(\mathfrak v_1) = 0$. However, this can only happen if $\mathfrak v_0 = \varepsilon$ and $h = 0$, or else $\mathfrak v_0 \ast y^{\ast h} = a \ast \mathfrak v^\prime$ for some $\mathfrak v^\prime \in \mathscr F(X)$ and, hence, $\psi(\mathfrak z_0) = \psi(\mathfrak v^\prime \ast \mathfrak a_j \ast w^{\ast k} \ast \mathfrak v_1) = 1$ (again a contradiction). So we find that
\[
a \ast \mathfrak z = \mathfrak a_j \ast w^{\ast k} \ast \mathfrak v_1 \ast y^{\ast r} \ast \mathfrak a_s \ast w^{\ast t} \ast \mathfrak z^\prime,
\]
which is still impossible, as it implies that $a \ast \mathfrak z$ is normal (on account of the properties of $\mathfrak v_1$ and $\mathfrak z^\prime$). 

Putting it all together, we can thus conclude that $\mathfrak z_0 = \varepsilon$ (as wished), and we obtain from  \eqref{equ:rep-of-a*z} that
\[
\mathfrak z = y^{\ast r} \ast \mathfrak a_s \ast w^{\ast t} \ast \mathfrak z^\prime
\quad\text{and}\quad
a \ast \mathfrak z = a \ast y^{\ast r} \ast \mathfrak a_s \ast w^{\ast t} \ast \mathfrak z^\prime.
\]
So the only way for $a \ast \mathfrak z$ not to be normal is if $a = y$ and $s, t \in \mathbf N^+$. But then $r = 0$ (recall that $rst = 0$), and we find that $\mathfrak z = \mathfrak a_s \ast w^{\ast t} \ast \mathfrak z^\prime$ and $a \ast \mathfrak z = y \ast \mathfrak a_s \ast w^{\ast t} \ast \mathfrak z^\prime$. This finishes the proof, since it is evident that the normal form of $y \ast \mathfrak a_s \ast w^{\ast t} \ast \mathfrak z^\prime$ is the $X$-word $\mathfrak a_{s-1} \ast w^{\ast (t-1)} \ast \mathfrak z^\prime$.
\end{proof}

\begin{claim}\label{claim:special-R-chain}
Suppose that $a \ast \mathfrak u \equiv a \ast \mathfrak v \bmod R^\sharp$ \textup{(}resp., $\mathfrak u \ast a \equiv \mathfrak v \ast a \bmod R^\sharp$\textup{)} for some $\mathfrak u, \mathfrak v \in \mathscr F(X)$ and $a \in X$. Then $\mathfrak u \equiv \mathfrak v \bmod R^\sharp$.
\end{claim}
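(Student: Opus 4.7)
The plan is to reduce the problem to comparing normal forms. By the uniqueness of normal forms established just before Claim A, two $X$-words are $R^\sharp$-congruent if and only if they share the same normal form. So we may replace $\mathfrak u$ and $\mathfrak v$ by their normal forms $\underline{\mathfrak u}$ and $\underline{\mathfrak v}$; once we show $\underline{\mathfrak u} = \underline{\mathfrak v}$, the congruence $\mathfrak u \equiv \mathfrak v \bmod R^\sharp$ follows at once. We will treat the case $a \ast \mathfrak u \equiv a \ast \mathfrak v$; the right-sided version is entirely symmetric (using the right-most-letter half of Claim A).

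Next, we analyze $a \ast \underline{\mathfrak u}$ and $a \ast \underline{\mathfrak v}$ with the help of Claim A. For each of these two words there are just two alternatives: either it is already normal, or it is not, in which case Claim A forces $a = y$ and gives a decomposition $\underline{\mathfrak u} = \mathfrak a_s \ast w^{\ast t} \ast \mathfrak z^\prime$ (resp., $\underline{\mathfrak v} = \mathfrak a_{s^\prime} \ast w^{\ast t^\prime} \ast \mathfrak z^{\prime\prime}$) with $s, t \in \mathbf N^+$ and $\mathfrak z^\prime$ a normal word whose left-most letter is not $w$, together with an \emph{explicit} normal form $\mathfrak a_{s-1} \ast w^{\ast(t-1)} \ast \mathfrak z^\prime$ for $a \ast \underline{\mathfrak u}$. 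Since $a \ast \underline{\mathfrak u}$ and $a \ast \underline{\mathfrak v}$ are $R^\sharp$-congruent, by uniqueness of normal forms they reduce to the \emph{same} word.

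Now a short case analysis finishes the argument. If both $a \ast \underline{\mathfrak u}$ and $a \ast \underline{\mathfrak v}$ are already normal, then equality of their normal forms means $a \ast \underline{\mathfrak u} = a \ast \underline{\mathfrak v}$ in the free monoid $\mathscr F(X)$, so free-monoid left-cancellation yields $\underline{\mathfrak u} = \underline{\mathfrak v}$. If exactly one of them, say $a \ast \underline{\mathfrak u}$, fails to be normal, then its normal form is $\mathfrak a_{s-1} \ast w^{\ast(t-1)} \ast \mathfrak z^\prime$, whose left-most letter is $x$ (since $\mathfrak a_{s-1}$ begins with $x$), whereas $a \ast \underline{\mathfrak v} = y \ast \underline{\mathfrak v}$ begins with $y$: a contradiction. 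If neither is normal, then $a = y$ and we have to equate $\mathfrak a_{s-1} \ast w^{\ast(t-1)} \ast \mathfrak z^\prime = \mathfrak a_{s^\prime-1} \ast w^{\ast(t^\prime-1)} \ast \mathfrak z^{\prime\prime}$ in the free monoid; inspecting letters position by position forces $s = s^\prime$, and then using that neither $\mathfrak z^\prime$ nor $\mathfrak z^{\prime\prime}$ begins with $w$ forces $t = t^\prime$ and $\mathfrak z^\prime = \mathfrak z^{\prime\prime}$, so $\underline{\mathfrak u} = \underline{\mathfrak v}$.

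The only step that takes some care is the last one: one must be confident in parsing the explicit normal form $\mathfrak a_{s-1} \ast w^{\ast(t-1)} \ast \mathfrak z^\prime$ unambiguously from left to right, which is exactly where the hypothesis that $\mathfrak z^\prime$ does not begin with $w$ is used. Everything else is a direct appeal to Claim A, to the uniqueness of normal forms, and to the obvious cancellativity of the free monoid $\mathscr F(X)$.
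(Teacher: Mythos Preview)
Your proof is correct and follows essentially the same route as the paper's own argument: reduce to normal forms, invoke Claim~A to handle the case where $a\ast\underline{\mathfrak u}$ (or $a\ast\underline{\mathfrak v}$) is not normal, rule out the mixed case by comparing left-most letters ($x$ versus $y$), and in the remaining case parse the explicit normal forms $\mathfrak a_{s-1}\ast w^{\ast(t-1)}\ast\mathfrak z'$ letter by letter using that $\mathfrak z'$ and $\mathfrak z''$ do not begin with $w$. The only cosmetic difference is that you frame the three cases explicitly up front, whereas the paper assumes without loss of generality that $a\ast\mathfrak u$ is not normal and then argues that $a\ast\mathfrak v$ cannot be normal either.
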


\begin{proof}
	Every $X$-word is $R^\sharp$-congruent to its own normal form, and since the normal form is unique, we see that $\mathfrak z \equiv \mathfrak y \bmod R^\sharp$ if and only if $\underline{\mathfrak z} = \underline{\mathfrak y}$. Therefore, it will be enough to assume $\mathfrak u = \underline{\mathfrak u}$ and $\mathfrak v = \underline{\mathfrak v}$, and to show that $a \ast \mathfrak u \equiv a \ast \mathfrak v \bmod R^\sharp$ implies $\mathfrak u = \mathfrak v$; the ``symmetric'' statement that $\mathfrak u \ast a \equiv \mathfrak v \ast a \bmod R^\sharp$ implies $\mathfrak u = \mathfrak v$, can be proved in essentially the same way (we will omit the details).
	
	So, suppose $a \ast \mathfrak u \equiv a \ast \mathfrak v \bmod R^\sharp$, or equivalently $\underline{a \ast \mathfrak u} = \underline{a \ast \mathfrak v}$. If $a \ast \mathfrak u$ and $a \ast \mathfrak v$ are both in normal form, then $a \ast \mathfrak u = a \ast \mathfrak v$ and we are done. So, assume without loss of generality that $a \ast \mathfrak u$ is not normal. Then we have from Claim \ref{claim:A} that $a$ is equal to $y$ and there exist $s, t \in \mathbf N^+$ and $\mathfrak u^\prime \in \mathscr F(X)$ such that 
	\begin{equation}\label{equ:21}
	\mathfrak u = \mathfrak a_s \ast w^{\ast t} \ast \mathfrak u^\prime
	\quad\text{and}\quad
	\underline{a \ast \mathfrak u} = \mathfrak a_{s-1} \ast w^{\ast (t-1)} \ast \mathfrak u^\prime.
	\end{equation}
	We thus see that $a \ast \mathfrak v$, too, is not normal; otherwise, it would follow from the above that
	\[
	\mathfrak a_{s-1} \ast w^{\ast (t-1)} \ast \mathfrak u^\prime = a \ast \mathfrak u = \underline{a \ast \mathfrak v} = a \ast \mathfrak v = y \ast \mathfrak v,
	\]
	which is impossible, because the left-most letter of $\mathfrak a_s$ is $x$. Then, again by Claim \ref{claim:A}, we find that 
	\begin{equation}\label{equ:22}
	\mathfrak v = \mathfrak a_\beta \ast w^{\ast \gamma} \ast \mathfrak v^\prime
	\quad\text{and}\quad
	\underline{a \ast \mathfrak v} = \mathfrak a_{\beta-1} \ast w^{\ast (\gamma-1)} \ast \mathfrak v^\prime
	\end{equation}
	for some $\alpha, \beta \in \mathbf N^+$ and $\mathfrak v^\prime \in \mathscr F(X)$. Since $\underline{a \ast \mathfrak u} = \underline{a \ast \mathfrak v}$, we thus conclude that 
	\[
	\mathfrak a_{s-1} \ast w^{\ast (t-1)} \ast \mathfrak u^\prime = \mathfrak a_{\beta-1} \ast w^{\ast (\gamma-1)} \ast \mathfrak u^\prime,
	\]
	which is only possible if $s = \beta$, $t = \gamma$, and $\mathfrak u^\prime = \mathfrak v^\prime$ (on account of the properties of $\mathfrak u^\prime$ and $\mathfrak v^\prime$). This suffices to complete the proof, as it shows, by \eqref{equ:21} and \eqref{equ:22}, that $\mathfrak u = \mathfrak v$. 
\end{proof}
Finally, we come to the conclusions. Let $\mathfrak u$, $\mathfrak v$, and $\mathfrak z$ be arbitrary $X$-words. If $\mathfrak z \ast \mathfrak u \equiv \mathfrak z \ast \mathfrak v \bmod R^\sharp$ or $\mathfrak u \ast \mathfrak z \equiv \mathfrak v \ast \mathfrak z \bmod R^\sharp$, then Claim \ref{claim:special-R-chain} implies, by induction on $\|\mathfrak z\|_X$, that $\mathfrak u \equiv \mathfrak v \bmod R^\sharp$. This proves that $H$ is cancellative, and we are left to verify that $H$ is acyclic. So, set $\ell := \psi(\mathfrak z)$ and suppose that 
\begin{equation}\label{equ:4.23}
\mathfrak z \equiv \mathfrak u \ast \mathfrak z \ast \mathfrak v \bmod R^\sharp.
\end{equation}
We need to prove $\mathfrak u = \mathfrak v = \varepsilon$. 
To begin, it is clear that, for \eqref{equ:4.23} to hold, $\mathfrak u$ and $\mathfrak v$ must be $\{y,w\}$-words, because the $X$-words $\mathfrak a_k$ and $y \ast \mathfrak a_{k+1} \ast w$ contain, for every $k \in \mathbf N$, an equal number of $x$'s and $z$'s, and this implies that the same is true for all $X$-words $\mathfrak b$ and $\mathfrak c$ with $\mathfrak b \equiv \mathfrak c \bmod R^\sharp$. If, on the other hand, $\ell$ is zero, then we have from  \eqref{equ:omega-of-congruent-words} that $\mathfrak z$ and $\mathfrak u \ast \mathfrak z \ast \mathfrak v$ are both in normal form; whence \eqref{equ:4.23} is only possible if $\mathfrak u = \mathfrak v = \varepsilon$, and we are done. Therefore, we assume from now on that $\ell \ne 0$; and since $\mathfrak z \equiv \underline{\mathfrak z} \bmod R^\sharp$ and $R^\sharp$ is a monoid congruence, we may also assume without loss of generality that $\mathfrak z$ is normal. 
Accordingly, let $\mathfrak z$ be written as in \eqref{equ:canonical-decomposition},
with the additional restriction that at least one of the exponents $r_i$, $s_i$, and $t_i$ is zero for each $i \in \llb 1, \ell \rrb$. In consequence, it is not difficult to see that \eqref{equ:4.23} can only be true if $\mathfrak z_0 = \mathfrak z_\ell = \varepsilon$ and there exist $\alpha, \beta \in \mathbf N$ such that $\mathfrak u = y^{\ast \alpha}$ and $\mathfrak v = w^{\ast \beta}$. Then it is straightforward to determine the normal form of $\mathfrak u \ast \mathfrak z \ast \mathfrak v$ and to see that it coincides with the one of $\mathfrak z$ only if $\alpha = \beta = 0$ (as wished).
\end{example}

\section{Primes and unique factorization}

In this final section, we prove a characterization of factorial monoids based on a non-standard yet natural generalization of the notion of prime element to the non-commutative setting (Definition \ref{def:powerful-atoms}). We start by recalling the following, cf. \cite[Definition 1.1.1.3]{GeHK06}:

\begin{definition}\label{def:prime}
	Let $H$ be a monoid. An element $p \in H$ is \emph{prime} provided that $p \notin H^\times$ and if $p \mid_H xy$, for some $x, y \in H$, then $p \mid_H x$ or $p \mid_H y$. We also refer to the prime elements of $H$ as the primes of $H$.
\end{definition}

Prime elements are of great importance in many aspects of factorization theory. Among other things, it is a simple exercise to show that, in the classical setting of cancellative commutative monoids, every prime is an atom, see, e.g., \cite[Proposition 1.1.2.3]{GeHK06}; but this need no longer be true even in the ``slightly less restrictive'' case of the multiplicative monoid of an integral domain:

\begin{example}\label{exa:5.2}
	Assume $H$ is a non-trivial monoid with an absorbing element $0_H$ (meaning that $0_H x = x\, 0_H = 0_H$ for every $x \in H$) and no zero divisors (i.e., $xy \ne 0_H$ for all $x, y \in H$ with $x \ne 0_H \ne y$). Then $0_H = 0_H^2$ and $0_H \notin H^\times$, so $0_H$ is neither a unit nor an atom. Yet, $0_H$ is a prime, because $0_H \mid_H xy$ for some $x,y \in H$ only if $x$ or $y$ equals $0_H$.
\end{example}

However, it turns out that \cite[Proposition 1.1.2.3]{GeHK06} carries over to acyclic or atomic monoids.

\begin{proposition}\label{prop:primes}
Let $H$ be a monoid, and let $p$ be a prime element of $H$. The following hold:
\begin{enumerate}[label={\rm (\roman{*})}]
\item\label{prop:primes(i)} If $p \mid_H x_1 \cdots x_n$ for some $x_1, \ldots, x_n \in H$, then $p \mid_H x_i$ for some $i \in \llb 1, n \rrb$. 
\item\label{prop:primes(ii)} If $H$ is acyclic or atomic, then $p$ is an atom of $H$.
\end{enumerate}
\end{proposition}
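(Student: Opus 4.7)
The plan for part (i) is a routine induction on $n$. The base $n = 1$ is tautological, and for $n \ge 2$ one writes $x_1 \cdots x_n = x_1 \cdot (x_2 \cdots x_n)$ and applies the primality of $p$ to this two-factor grouping, handling the alternative $p \mid_H x_2 \cdots x_n$ by the inductive hypothesis. No subtleties arise.

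For part (ii), I would argue by contradiction. Suppose $p = xy$ with $x, y \in H \setminus H^\times$. From $p \mid_H p = xy$ and the primality of $p$, we may assume, up to swapping $x$ and $y$, that $p \mid_H x$, so that $x = upv$ for some $u, v \in H$ and
\[
p = xy = u \cdot p \cdot (vy).
\]

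If $H$ is acyclic, this cycle-relation immediately forces $u \in H^\times$ and $vy \in H^\times$; since acyclic monoids are unit-cancellative by Remark \ref{rem:history}, Lemma \ref{lem:2.4}\ref{lem:2.4(ii)} then yields $y \in H^\times$, contradicting the choice of $y$.

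If instead $H$ is atomic, the fact that $p$ is a non-unit ensures $\mathscr A(H) \ne \emptyset$, so Lemma \ref{lem:2.4}\ref{lem:2.4(ii)} is available. Writing $x$ and $y$ as non-empty products of atoms, $p$ factors as $a_1 \cdots a_N$ with $N \ge 2$; by part (i), some $a_i$ satisfies $p \mid_H a_i$, say $a_i = u p v$. Since $p \notin H^\times$, Lemma \ref{lem:2.4}\ref{lem:2.4(ii)} rules out both $up \in H^\times$ and $pv \in H^\times$, so the atom property of $a_i$ applied to the two groupings $a_i = u \cdot (pv) = (up) \cdot v$ forces $u, v \in H^\times$. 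Hence $p = u^{-1} a_i v^{-1}$, which is itself an atom by Lemma \ref{lem:2.4}\ref{lem:2.4(i)}, contradicting the decomposition $p = xy$ with $x, y$ both non-units. The main obstacle is precisely this atomic case: one has to juggle both groupings of $a_i = upv$ and invoke Lemma \ref{lem:2.4}\ref{lem:2.4(ii)} twice to conclude that $u$ and $v$ are units; the acyclic case, by comparison, is a one-line consequence of the definitions once the identity $p = u \cdot p \cdot (vy)$ is on the table.
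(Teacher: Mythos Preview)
Your proof is correct and follows essentially the same route as the paper. The only cosmetic differences are that the paper handles the atomic case directly (writing $p$ itself as a product of atoms and concluding $p \simeq_H a_i$) rather than via your contradiction wrapper, and in the acyclic case the paper cites Lemma~\ref{lem:dedekind-finite}\ref{lem:dedekind-finite(i)} instead of your combination of Remark~\ref{rem:history} with Lemma~\ref{lem:2.4}\ref{lem:2.4(ii)}; the underlying arguments are the same.
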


\begin{proof} Part \ref{prop:primes(i)} is a routine induction (if $n \ge 2$ and $p \mid_H x_1 \cdots x_n$ for some $x_1, \ldots, x_n \in H$, then $p \mid_H x_1$ or $p \mid_H x_2 \cdots x_{n-1}$). Therefore, we can restrict attention to proving part \ref{prop:primes(ii)}. 

\vskip 0.1cm
\textsc{Case 1:} $H$ is acyclic. Suppose by way of contradiction that $p$ is not an atom. Then $p=xy$ for some $x,y \in H \setminus H^\times$ (recall that, by definition, a prime is not a unit); and since $p \mid_H xy$, we have that $p \mid_H x$ or $p \mid_H y$. Assume, e.g., that $x = upv$ for some $u, v \in H$ (the other case is similar). Then $p = xy = upvy$, and because $H$ is acyclic, this implies that $vy \in H^\times$. Therefore, we see from Proposition \ref{lem:dedekind-finite}\ref{lem:dedekind-finite(i)} that $y$ is a unit. So we have a contradiction, implying that $p$ is an atom.
	
\vskip 0.1cm
\textsc{Case 2:} $H$ is atomic. By definition, $p$ is not a unit of $H$, and hence there are $a_1, \ldots, a_n \in \mathscr A(H)$ such that $p = a_1 \cdots a_n$ (every non-unit element in an atomic monoid is a non-empty product of atoms). By part \ref{prop:primes(i)}, this yields that $p \mid_H a_i$ for some $i \in \llb 1, n \rrb$, and hence $a_i = u p v$ for some $u, v \in H$. On the other hand, we have by Lemma \ref{lem:2.4}\ref{lem:2.4(ii)} that neither $up$ nor $pv$ is a unit of $H$. Since an atom cannot be written as a product of two non-units, we thus find that $u, v \in H^\times$, and hence $p = u^{-1} a_i v^{-1}$. By Lemma \ref{lem:2.4}\ref{lem:2.4(i)}, this is enough to conclude that $p$ is itself an atom.
\end{proof}

Another basic result in factorization theory is that a cancellative commutative monoid $H$ is factorial if and only if $H$ is atomic and each of its atoms is a prime, see \cite[Theorem 1.1.10.2, parts (a) and (b)]{GeHK06}. The next example shows that the same need no longer be true for cancellative monoids (the reader may want to review \S { }\ref{sec:presentations} before proceeding).

\begin{example}\label{exa:5.4}
Let $H$ be the monoid defined by the presentation $\mathrm{Mon}\langle X \mid R \rangle$, where $X$ is the $2$-element set $\{x, y\}$ and $R := \{(x \ast y \ast x, y \ast x \ast y \ast x \ast y)\} \subseteq \mathscr F(X) \times \mathscr F(X)$. 

It is routine to check that $H$ is an f.g., reduced, atomic monoid with $\mathscr A(H) = X$; and similarly as in Example \ref{exa:non-atomic-2-generator-1-relator-canc-mon}, it follows by Adian's embedding theorem that $H$ is also cancellative. In addition, it takes a moment to show that $x$ and $y$ are both primes in $H$ (i.e., every atom of $H$ is a prime). However, $H$ is not BF (let alone factorial), because $x \ast y \ast x \equiv y^{\ast k} \ast x \ast y \ast x \ast y^{\ast k} \bmod R^\sharp$ for every $k \in \mathbf N$ (by induction), and hence $\{3 + 2k: k \in \mathbf N\} \subseteq \mathsf L_H(x \ast y \ast x)$.
\end{example}

On the positive side, we will see that \cite[Theorem 1.1.10.2, parts (a) and (b)]{GeHK06} carries over to arbitrary monoids on condition of replacing primes with a special type of atoms we call ``powerful'', since they occur with the same ``exponent'' in any two factorizations of a given element. 

\begin{definition}
\label{def:powerful-atoms}
	Let $H$ be a monoid. Given $a \in \mathscr A(H)$, we denote by $\mathsf{v}_a^H$ the function $\mathscr F(\mathscr A(H)) \to \mathbf N$ that maps the empty word to $0$ and a non-empty $\mathscr A(H)$-word $a_1 \ast \cdots \ast a_n$ of length $n$ to the number of indices $i \in \llb 1, n \rrb$ such that $a_i \simeq_H a$, cf. \cite[Definition 2.10]{Fa-Tr18}; and we say $a$ is \emph{powerful} if $\mathsf{v}_a^H(\mathfrak b) = \mathsf{v}_a^H(\mathfrak c)$ for all $\mathfrak b, \mathfrak c \in \mathscr F(\mathscr A(H))$ such that $\pi_H(\mathfrak b) = \pi_H(\mathfrak c)$.
\end{definition}
The function $\mathsf{v}_a^H$ in Definition \ref{def:powerful-atoms} is, in essence, a non-commutative generalization of the $p$-adic valuation commonly associated with a prime number $p$ in the positive integers. This is transparent from some of the properties such a function satisfies, which we record in a lemma for future reference.

\begin{lemma}
\label{lem:valuation}
Let $H$ be a monoid, and let $\mathfrak b, \mathfrak c \in \mathscr F(\mathscr A(H))$. The following hold:
\begin{enumerate}[label={\rm (\roman{*})}]
	\item\label{lem:valuation(i)} $\mathsf{v}_a^H(\mathfrak b \ast \mathfrak c) = \mathsf{v}_a^H(\mathfrak b) + \mathsf{v}_a^H(\mathfrak c)$ for every $a \in \mathscr A(H)$.
	\item\label{lem:valuation(ii)} If $\mathsf{v}_a^H(\mathfrak b) \ne 0$ for some $a \in \mathscr A(H)$, then $a \mid_H \pi_H(\mathfrak b)$.
	\item\label{lem:valuation(iii)} $\mathfrak b \equiv \mathfrak c \bmod \mathscr C_H$ if and only if $\pi_H(\mathfrak b) = \pi_H(\mathfrak c)$ and $\mathsf{v}_a^H(\mathfrak b) = \mathsf{v}_a^H(\mathfrak c)$ for every $a \in \mathscr A(H)$.
\end{enumerate}
\end{lemma}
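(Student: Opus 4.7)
The plan is to dispatch the three parts in sequence, leveraging the definitions of $\mathsf v_a^H$, $\pi_H$ and $\mathscr C_H$ recorded in the paper, and citing Lemma \ref{lem:2.4} only once (and only for part (iii)).

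Part (i) is immediate from the definition of $\mathsf v_a^H$. If either $\mathfrak b$ or $\mathfrak c$ is empty, there is nothing to prove. Otherwise, writing $\mathfrak b = b_1 \ast \cdots \ast b_m$ and $\mathfrak c = c_1 \ast \cdots \ast c_n$, the concatenation $\mathfrak b \ast \mathfrak c$ is the word $b_1 \ast \cdots \ast b_m \ast c_1 \ast \cdots \ast c_n$, and the set of positions carrying a letter $\simeq_H a$ is the disjoint union of the analogous set for $\mathfrak b$ and (a shift of) the one for $\mathfrak c$. Part (ii) is essentially instant once one unpacks $\simeq_H$: if $\mathsf v_a^H(\mathfrak b) \neq 0$, then $\mathfrak b$ is non-empty, say $\mathfrak b = b_1 \ast \cdots \ast b_m$, and there is an index $i$ with $b_i \simeq_H a$, which by definition means $b_i = u a v$ for some $u, v \in H^\times$; substituting into $\pi_H(\mathfrak b) = b_1 \cdots b_{i-1}\, u a v\, b_{i+1} \cdots b_m$ places $a$ between two elements of $H$, so $a \mid_H \pi_H(\mathfrak b)$.

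For part (iii), the forward direction unfolds the definition of $\mathscr C_H$. Either both $\mathfrak b$ and $\mathfrak c$ are empty (trivial) or they have the same length $n$, $\pi_H(\mathfrak b) = \pi_H(\mathfrak c)$, and a permutation $\sigma$ of $\llb 1, n\rrb$ witnesses $b_i \simeq_H c_{\sigma(i)}$ for every $i$. Since $\simeq_H$ is an equivalence, $\sigma$ restricts, for each atom $a$, to a bijection between $\{i: b_i \simeq_H a\}$ and $\{j: c_j \simeq_H a\}$; taking cardinalities gives $\mathsf v_a^H(\mathfrak b) = \mathsf v_a^H(\mathfrak c)$.

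The only step that requires some care, and that I expect to be the main obstacle, is the converse direction of (iii). Assume $\pi_H(\mathfrak b) = \pi_H(\mathfrak c)$ and $\mathsf v_a^H(\mathfrak b) = \mathsf v_a^H(\mathfrak c)$ for every $a \in \mathscr A(H)$. I would first show that $\mathfrak b$ is empty if and only if $\mathfrak c$ is: if, say, $\mathfrak b = \varepsilon$ and $\mathfrak c$ were non-empty, then $\mathscr A(H)$ would be non-empty (since the letters of $\mathfrak c$ are atoms), so Lemma \ref{lem:2.4}\ref{lem:2.4(ii)} would force the product of atoms $\pi_H(\mathfrak c)$ to be a non-unit, contradicting $\pi_H(\mathfrak c) = \pi_H(\varepsilon) = 1_H$. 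In the remaining case, both words are non-empty; summing the hypothesis $\mathsf v_a^H(\mathfrak b) = \mathsf v_a^H(\mathfrak c)$ over a set of representatives of the $\simeq_H$-classes meeting the letters of $\mathfrak b \ast \mathfrak c$ yields $\|\mathfrak b\|_{\mathscr A(H)} = \|\mathfrak c\|_{\mathscr A(H)} =: n$. Finally, the equality of counts within each $\simeq_H$-class provides, class by class, a bijection between the corresponding positions in $\mathfrak b$ and $\mathfrak c$, and pasting these together yields a permutation $\sigma$ of $\llb 1, n\rrb$ with $b_i \simeq_H c_{\sigma(i)}$ for every $i$. Combined with $\pi_H(\mathfrak b) = \pi_H(\mathfrak c)$, this is precisely the defining condition for $\mathfrak b \equiv \mathfrak c \bmod \mathscr C_H$.
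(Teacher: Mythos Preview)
Your proposal is correct and is precisely the routine unfolding of the definitions that the paper has in mind: the paper's own proof reads in full ``The proof is straightforward from the definitions. We leave the details to the reader.'' Your write-up supplies exactly those details (including the one non-trivial point, the use of Lemma~\ref{lem:2.4}\ref{lem:2.4(ii)} to rule out $\varepsilon$ being $\mathscr C_H$-congruent to a non-empty word), so there is nothing to add or correct.
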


\begin{proof}
The proof is straightforward from the definitions. We leave the details to the reader.
\end{proof}

By Example \ref{exa:5.4}, an atom can be prime without being powerful. Complementarily, we will show that a powerful atom need not be prime (even in a cancellative commutative monoid).

\begin{example}\label{exa:5.7}
Fix a prime number $p \in \mathbf N^+$, and let $H$ be the submonoid of the additive group of the rational field generated by $1+1/p$ and all positive rational numbers whose denominator is coprime to $p$ (here, the denominator of a rational number $x$ is the smallest integer $n \ge 1$ such that $nx \in \mathbf Z$). In fact, $H$ is one of a class of cancellative commutative monoids (named \emph{Puiseux monoids}) that have received great attention in recent years, see \cite{Ch-Go-Go19} and references therein. 

We will write $H$ additively (as is natural to do).
It is easily seen that $H$ is a reduced monoid with identity $0 \in \mathbf Q$, and $1 + 1/p$ is the only atom of $H$. In particular, note that $1+1/p$ is the smallest element in $H$ whose denominator is not coprime to $p$; and if, on the other hand, $x$ is a non-zero element of $H$ and the denominator of $x$ is coprime to $p$, then $x/(p+1)$ is also in $H$ (with the result that $x$ is not an atom, as it ``factors'' into a sum of $p+1$ non-zero elements of $H$). 

It is thus obvious that $1 + 1/p$ is a powerful atom of $H$, because $H$ is cancellative and $\mathscr F(\mathscr A(H))$ is the free monoid over the one-element alphabet $\{1 + 1/p\}$. Yet, $1+1/p$ is not a prime of $H$. In fact, it is clear that $1 \in H$ and 
$1+1/p \mid_H p+1$. So, if $1+1/p$ were a prime element of $H$, then we would obtain from Proposition \ref{prop:primes}\ref{prop:primes(i)} that $(1 + 1/p) + x = 1$ for some $x \in H$, a contradiction.
\end{example}

The relation between prime elements and powerful atoms is further clarified by the next result.

\begin{proposition}
\label{prop:5.8}
Let $H$ be a monoid. The following hold:
\begin{enumerate}[label={\rm (\roman{*})}]
\item\label{prop:5.8(i)} If $H$ is atomic, then every powerful atom is a prime.
\item\label{prop:5.8(ii)} If $H$ is cancellative and commutative, then every prime is a powerful atom.
\end{enumerate}
\end{proposition}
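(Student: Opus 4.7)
For part (i), fix a powerful atom $a$ and suppose $a \mid_H xy$, so that $xy = uav$ for some $u, v \in H$. The plan is to produce two $\mathscr A(H)$-words $\mathfrak b, \mathfrak c \in \mathcal Z_H(xy)$ and compare them via powerfulness of $a$. First, $xy$ is a non-unit: since $\mathscr A(H) \ne \emptyset$ (it contains $a$), Lemma~\ref{lem:2.4}\ref{lem:2.4(ii)} applies, so if $xy \in H^\times$ then $uav \in H^\times$, forcing $a \in H^\times$ and contradicting $a$ being an atom; in particular, at least one of $x, y$ is a non-unit. Build $\mathfrak c$ from $xy = uav$ by factoring each of $u, v$ into atoms when non-unit and by absorbing any unit among $u, v$ into the central atom $a$ via Lemma~\ref{lem:2.4}\ref{lem:2.4(i)}; this yields a factorization of $xy$ in which at least one atom is $\simeq_H a$, so $\mathsf v_a^H(\mathfrak c) \ge 1$. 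Build $\mathfrak b$ analogously from atomic factorizations of $x$ and $y$ (factoring the non-units, and absorbing the one unit among them, if any, into a neighboring atom); this produces a valid $\mathscr A(H)$-word whose atoms partition naturally into an ``$x$-part'' and a ``$y$-part''. Powerfulness of $a$ then gives $\mathsf v_a^H(\mathfrak b) = \mathsf v_a^H(\mathfrak c) \ge 1$, so some atom $c$ of $\mathfrak b$ satisfies $c \simeq_H a$. Since $c \simeq_H a$ implies $a \mid_H c$ (as $c = u' a v'$ for units $u', v' \in H$) and $c$ divides $x$ or $y$ according to which part it belongs to, transitivity of $\mid_H$ yields $a \mid_H x$ or $a \mid_H y$.

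For part (ii), first note that a cancellative commutative monoid is acyclic: $uxv = x$ rearranges (by commutativity) to $(uv)x = 1_H \cdot x$, and cancellativity forces $uv = 1_H$, whence $u, v \in H^\times$. Thus Proposition~\ref{prop:primes}\ref{prop:primes(ii)} implies that the prime $p$ is an atom. Given $\mathfrak b, \mathfrak c \in \mathscr F(\mathscr A(H))$ with $\pi_H(\mathfrak b) = \pi_H(\mathfrak c)$, set $k := \mathsf v_p^H(\mathfrak b)$ and $\ell := \mathsf v_p^H(\mathfrak c)$, and assume without loss of generality that $k \le \ell$. Since any atom $c \simeq_H p$ can be written as $\alpha p$ with $\alpha \in H^\times$ (commutativity plus the definition of $\simeq_H$), one rewrites
\[
\pi_H(\mathfrak b) = u\, p^{k}\, a_1 \cdots a_r
\qquad\text{and}\qquad
\pi_H(\mathfrak c) = v\, p^{\ell}\, b_1 \cdots b_s,
\]
with $u, v \in H^\times$ and $a_i \not\simeq_H p$, $b_j \not\simeq_H p$ for all $i, j$. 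Equating and cancelling $p^k$ (by cancellativity) gives $u\, a_1 \cdots a_r = v\, p^{\ell-k}\, b_1 \cdots b_s$. If $\ell > k$, the right-hand side is divisible by $p$, hence so is the left; since $p$ cannot divide the unit $u$, Proposition~\ref{prop:primes}\ref{prop:primes(i)} yields $p \mid_H a_{i_0}$ for some $i_0$. Then commutativity writes $a_{i_0} = \gamma p$ for some $\gamma \in H$, and since $a_{i_0}$ is an atom and $p \notin H^\times$, $\gamma$ must be a unit, so $a_{i_0} \simeq_H p$, contradicting the choice of the $a_i$. Hence $k = \ell$.

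The main obstacle lies in part (i): the case analysis for building $\mathfrak b$ and $\mathfrak c$ into genuine $\mathscr A(H)$-words must handle each combination of unit/non-unit status of $x, y, u, v$, ensuring that every unit is absorbed into an adjacent atom via Lemma~\ref{lem:2.4}\ref{lem:2.4(i)} so that powerfulness of $a$ can actually be invoked. Once this bookkeeping is in place, the equality $\mathsf v_a^H(\mathfrak b) = \mathsf v_a^H(\mathfrak c)$ does all the work. Part (ii) is then essentially a classical $p$-adic valuation argument, adapted to the $H^\times$-ambiguity of $\simeq_H$-associates.
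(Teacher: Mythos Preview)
Your proof is correct and follows essentially the same approach as the paper's. For part \ref{prop:5.8(i)}, the paper first reduces to the case where $x, y$ are non-units and each of $u, v$ is either $1_H$ or a non-unit, then applies Lemma~\ref{lem:valuation} to the two factorizations $\mathfrak b \ast \mathfrak c$ and $\mathfrak u \ast a \ast \mathfrak v$; you instead handle the unit cases by absorption via Lemma~\ref{lem:2.4}\ref{lem:2.4(i)}, which is an equally valid way to package the same bookkeeping. For part \ref{prop:5.8(ii)}, your argument is virtually identical to the paper's, including the appeal to acyclicity and Proposition~\ref{prop:primes}\ref{prop:primes(ii)} followed by the $p$-adic cancellation step.
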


\begin{proof}
\ref{prop:5.8(i)} Let $a$ be a powerful atom of $H$, and assume that $a \mid_H xy$ for some $x, y \in H$, i.e., $xy = uav$ for some $u, v \in H$. We need to prove that $a \mid_H x$ or $a \mid_H y$.
To this end, we may suppose that neither $x$ nor $y$ is a unit, or else the conclusion is trivial (if, for instance, $x \in H^\times$, then $y = x^{-1} uav$ and, hence, $a \mid_H y$). Similarly, we can admit that either $u = 1_H$ or $u \notin H^\times$; otherwise, we have that $av = u^{-1} xy$, and so it suffices to show that $a \mid_H u^{-1} x$ or $a \mid_H y$ (note that $a \mid_H u^{-1} x$ if and only if $a \mid_H x$). And likewise, there is no loss of generality in assuming that $v = 1_H$ or $v \notin H^\times$. 

Since $H$ is atomic (by hypothesis), there thus exist $\mathscr A(H)$-words $\mathfrak b$, $\mathfrak c$, $\mathfrak u$, and $\mathfrak v$ such that $
x = \pi_H(\mathfrak b)$, $y = \pi_H(\mathfrak c)$, $u = \pi_H(\mathfrak u)$, and $
v = \pi_H(\mathfrak v)$; 
in particular, notice that $\mathfrak u = \varepsilon$ when $u = 1_H$, and $\mathfrak v = \varepsilon$ when $v = 1_H$. It follows that $\mathfrak b \ast \mathfrak c$ and $\mathfrak u \ast a \ast \mathfrak v$ are both factorizations (into atoms) of $xy$; and since $a$ is a powerful atom of $H$, we can conclude by Lemma \ref{lem:valuation}\ref{lem:valuation(i)} that 
\[
\mathsf{v}_a^H(\mathfrak b) + \mathsf{v}_a^H(\mathfrak c) = \mathsf{v}_a^H(\mathfrak b \ast \mathfrak c) = \mathsf{v}_a^H(\mathfrak u \ast a \ast \mathfrak v) = \mathsf{v}_a^H(\mathfrak u) + \mathsf{v}_a^H(a) + \mathsf{v}_a^H(\mathfrak v) \ge 1.
\]
Then $\mathsf{v}_a^H(\mathfrak b) \ge 1$ or $\mathsf{v}_a^H(\mathfrak c) \ge 1$, which implies by Lemma \ref{lem:valuation}\ref{lem:valuation(ii)} that $a \mid_H x$ or $a \mid_H y$ (as wished).

\ref{prop:5.8(ii)} Assume that $H$ has a prime element $p$. By Proposition \ref{prop:primes}\ref{prop:primes(ii)}, $p$ is an atom (recall from Remark \ref{rem:history}  that every cancellative commutative monoid is acyclic); it remains to prove that $p$ is also powerful. 

Let $\mathfrak a$ and $\mathfrak b$ be $\mathscr A(H)$-words with $\pi_H(\mathfrak a) = \pi_H(\mathfrak b)$, and define $h := \mathsf{v}_p^H(\mathfrak a)$ and $k := \mathsf{v}_p^H(\mathfrak b)$. 
Since $p$ is an atom and $H$ is commutative, it is thus clear from Lemma \ref{lem:2.4} and the definition of $\mathsf{v}_p^H$ that there exist $u, v \in H^\times$ and finite subsets $\mathcal A$ and $\mathcal B$ of $\mathscr A(H)$ such that 
$
\pi_H(\mathfrak a) = p^h u \, \prod_{a \in \mathcal A} a$,
$\pi_H(\mathfrak b) = p^k v \, \prod_{b \in \mathcal B} b$, and 
$p \nmid_H c$ for every $c \in \mathcal A \cup \mathcal B$ (note that $\alpha \mid_H \beta$, for some atoms $\alpha, \beta \in H$, if and only if $\alpha \simeq_H \beta$).

By symmetry, we can now assume without loss of generality that $h \le k$. Since $\pi_H(\mathfrak a) = \pi_H(\mathfrak b)$ and $H$ is cancellative, it then follows from the above that $
\prod_{a \in \mathcal A} a = p^{k-h} u^{-1}v \,\prod_{b \in \mathcal B} b$. But this is only possible if $h = k$, or else $\mathcal A$ is non-empty and $p \mid_H a$ for some $a \in \mathcal A$ (by the fact that $p$ is prime).
\end{proof}

With all this said, we are ready for the characterization promised at the beginning of the section, which we use in Example \ref{exa:5.10} to give an ``alternative proof'' of the fundamental theorem of arithmetic.

\begin{theorem}\label{th:factoriality}
	Let $H$ be a monoid. The following are equivalent:
	\begin{enumerate}[label=\textup{(\alph{*})}]
		\item\label{th:factoriality(a)} $H$ is factorial.
		\item\label{th:factoriality(b)} $H$ is atomic and every atom of $H$ is powerful.
	\end{enumerate}
\end{theorem}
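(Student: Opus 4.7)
The strategy is to read off both implications essentially directly from Lemma \ref{lem:valuation}\ref{lem:valuation(iii)}, which characterizes the congruence $\mathscr C_H$ purely in terms of the factorization homomorphism $\pi_H$ and the valuation-type functions $\mathsf v_a^H$ associated with atoms. Once this lemma is in hand, the content of Theorem \ref{th:factoriality} amounts to translating the defining condition of factoriality ($|\mathsf Z_H(x)| = 1$ for every non-unit $x$) into the statement that all factorizations of a given element agree on each $\mathsf v_a^H$.

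For the implication \ref{th:factoriality(a)} $\Rightarrow$ \ref{th:factoriality(b)}, I first observe that $H$ being factorial forces $\mathsf Z_H(x)$, and hence $\mathcal Z_H(x)$, to be non-empty for every $x \in H \setminus H^\times$, so $H$ is atomic by definition. To show that every atom $a \in \mathscr A(H)$ is powerful, I pick arbitrary $\mathfrak b, \mathfrak c \in \mathscr F(\mathscr A(H))$ with $\pi_H(\mathfrak b) = \pi_H(\mathfrak c) =: x$. If $x \in H^\times$, then by Lemma \ref{lem:2.4}\ref{lem:2.4(ii)} no factor of $x$ can fail to be a unit, forcing both $\mathfrak b$ and $\mathfrak c$ to be the empty word and so $\mathsf v_a^H(\mathfrak b) = \mathsf v_a^H(\mathfrak c) = 0$. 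Otherwise $x$ is a non-unit and $\mathfrak b, \mathfrak c \in \mathcal Z_H(x)$, so the factoriality of $H$ gives $\mathfrak b \equiv \mathfrak c \bmod \mathscr C_H$, and Lemma \ref{lem:valuation}\ref{lem:valuation(iii)} yields $\mathsf v_a^H(\mathfrak b) = \mathsf v_a^H(\mathfrak c)$.

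For the converse \ref{th:factoriality(b)} $\Rightarrow$ \ref{th:factoriality(a)}, let $x \in H \setminus H^\times$. Atomicity gives $\mathcal Z_H(x) \ne \emptyset$, and I only need to check that $\mathsf Z_H(x)$ is a singleton. So take any $\mathfrak b, \mathfrak c \in \mathcal Z_H(x)$. By definition, $\pi_H(\mathfrak b) = \pi_H(\mathfrak c) = x$; and since every atom of $H$ is powerful by assumption, $\mathsf v_a^H(\mathfrak b) = \mathsf v_a^H(\mathfrak c)$ for each $a \in \mathscr A(H)$. Another invocation of Lemma \ref{lem:valuation}\ref{lem:valuation(iii)} gives $\mathfrak b \equiv \mathfrak c \bmod \mathscr C_H$, i.e., $\llb \mathfrak b \rrb_{\mathscr C_H} = \llb \mathfrak c \rrb_{\mathscr C_H}$, proving that $|\mathsf Z_H(x)| = 1$.

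I do not anticipate any real obstacle here: the whole argument is a two-line unwinding of definitions once Lemma \ref{lem:valuation}\ref{lem:valuation(iii)} is accepted. The only point requiring a modicum of care is the ``unit'' case in the forward direction, where one must ensure that empty words are handled correctly (and the paper's convention that factorizations of units are the empty word, backed by Lemma \ref{lem:2.4}\ref{lem:2.4(ii)}, is precisely what is needed).
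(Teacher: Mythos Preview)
Your proof is correct and follows essentially the same route as the paper's. The only cosmetic difference is that for \ref{th:factoriality(a)} $\Rightarrow$ \ref{th:factoriality(b)} the paper unpacks the definition of $\mathscr C_H$ directly (via the permutation $\sigma$) to equate the two counts $\mathsf v_a^H(\mathfrak b)$ and $\mathsf v_a^H(\mathfrak c)$, whereas you invoke Lemma \ref{lem:valuation}\ref{lem:valuation(iii)} symmetrically in both directions; the content is identical.
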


\begin{proof}
	\ref{th:factoriality(a)} $\Rightarrow$ \ref{th:factoriality(b)}: By definition, a factorial monoid is, in particular, an atomic monoid; therefore, we only have to check that every atom of $H$ is powerful. 
	To this end, suppose that $\mathscr A(H)$ is non-empty (or else the conclusion is trivial), pick $a \in \mathscr A(H)$, and let $\mathfrak b$ and $\mathfrak c$ be $\mathscr A(H)$-words such that $\pi_H(\mathfrak b) = \pi_H(\mathfrak c)$. 
	
	We need to prove $\mathsf{v}_a^H(\mathfrak b) = \mathsf{v}_a^H(\mathfrak c)$.
	If either of $\mathfrak b$ or $\mathfrak c$ is empty, then $\mathfrak b = \mathfrak c = \varepsilon$ and we are done (recall from Lemma \ref{lem:2.4}\ref{lem:2.4(i)} that a non-empty product of atoms cannot be equal to a unit). Otherwise, $H$ being factorial implies that $n := \|\mathfrak b\|_{\mathscr A(H)} = \|\mathfrak c\|_{\mathscr A(H)} \in \mathbf N^+$; whence $\mathfrak b = b_1 \ast \cdots \ast b_n$ and $\mathfrak c = c_1 \ast \cdots \ast c_n$ for some unique $b_1, c_1, \ldots, b_n, c_n \in \mathscr A(H)$. Moreover, there exists a permutation $\sigma$ of $\llb 1, n \rrb$ such that $b_1 \simeq_H c_{\sigma(1)}$, \ldots, $b_n \simeq_H c_{\sigma(n)}$. Since $\simeq_H$ is an equivalence relation on $H$, it is thus clear that
	\[
	\mathsf{v}_a^H(\mathfrak b) := \bigl|\{i \in \llb 1, n \rrb: a \simeq_H b_i\}\bigr| = \bigl|\{i \in \llb 1, n \rrb: a \simeq_H c_{\sigma(i)}\}\bigr| = \bigl|\{i \in \llb 1, n \rrb: a \simeq_H c_i \}\bigr| =: \mathsf{v}_a^H(\mathfrak c).
	\]
	\ref{th:factoriality(b)} $\Rightarrow$ \ref{th:factoriality(a)}: This is an immediate consequence of Lemma \ref{lem:valuation}\ref{lem:valuation(iii)} (recall that $H$ is factorial if, by definition, any two factorizations of an element of $H$ are $\mathscr C_H$-congruent).
\end{proof}
\begin{example}\label{exa:5.10}
Let $H$ be a cancellative, commutative monoid. A \emph{common divisor} (in $H$) of a non-empty set $X \subseteq H$ is an element $\alpha \in X$ such that $\alpha \mid_H x$ for every $x \in H$. Accordingly, $H$ is a \emph{\textup{GCD}-monoid} if every non-empty finite subset $X$ has a \emph{greatest common divisor}, namely, there exists a common divisor $\alpha$ of $X$ with the additional property that, if $\beta$ is any other common divisor of $X$, then $\beta \mid_H \alpha$. 

Now, assume $H$ is a GCD-monoid. By part ii) of the unnumbered proposition on p. 114 in \cite{HK98}, every atom of $H$ is prime; and by Proposition \ref{prop:5.8}\ref{prop:5.8(ii)}, it follows that every atom of $H$ is, in fact, powerful. All in all, we thus see from Theorem \ref{th:factoriality} and the above that, if $H$ is an atomic GCD-monoid, then $H$ is factorial and every non-unit element of $H$ is a product of primes, cf. \cite[\S{ }10.7]{HK98}. 

This implies the fundamental theorem of arithmetic, because it is almost trivial to prove (without ever mentioning primes!) that the positive integers with the usual multiplication form an atomic GCD-monoid.
\end{example}
\section{Prospects for future research}
\label{sec:6}
Unfortunately, we do not know of anything analogous to Theorem \ref{th:factoriality} for minimally factorial monoids. In this regard, it would be interesting to understand, among other things, whether the ``minimal counterparts'' of some of the arithmetic invariants used in the classical theory to measure the ``distance from factoriality'' (e.g., unions of sets of lengths) are all finite for f.g.u. monoids, as is known to happen in the commutative and unit-cancellative case, see \cite[Theorem 3.5]{Ga-Ge09} and \cite[Proposition 3.4]{FGKT}. The dichotomy implied by the following proposition counts as a first little step in this direction.

\begin{proposition}
\label{prop:6.1}
	Let $H$ be a monoid. Then either the $\preceq_H$-minimal factorizations of $H$ are bounded in length \textup{(}i.e., there exists $N \in \mathbf N$ such that $\|\mathfrak a\|_{\mathscr A(H)} \le N$ for every $\preceq_H$-minimal $\mathscr A(H)$-word $\mathfrak a$\textup{)}, or for each $k \in \mathbf N$ there exists a $\preceq_H$-minimal $\mathscr A(H)$-word whose length is equal to $k$.
\end{proposition}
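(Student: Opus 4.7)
The plan is to argue by contrapositive: assuming the lengths of $\preceq_H$-minimal $\mathscr A(H)$-words are unbounded, I will show that for each $k \in \mathbf N$ some $\preceq_H$-minimal factorization has length exactly $k$. The case $k = 0$ is immediate, for the empty word $\varepsilon$ is $\preceq_H$-minimal (by the very definition of $\preceq_H$, no word other than $\varepsilon$ is $\preceq_H$-below $\varepsilon$). For $k \ge 1$ the statement reduces to the following key claim: if $\mathfrak a = a_1 \ast \cdots \ast a_n$ is a $\preceq_H$-minimal factorization and $1 \le k \le n$, then the prefix $\mathfrak a^\prime := a_1 \ast \cdots \ast a_k$ is itself $\preceq_H$-minimal as a factorization of $a_1 \cdots a_k$. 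Once the claim is established, picking $\mathfrak a$ of length $n \ge k$ (which is possible by the unboundedness hypothesis) produces a $\preceq_H$-minimal factorization of length exactly $k$, completing the proof.

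To prove the claim, I would proceed by contradiction: suppose $\mathfrak b \prec_H \mathfrak a^\prime$ for some $\mathfrak b = b_1 \ast \cdots \ast b_m \in \mathscr F(\mathscr A(H))$. Unpacking the definition of $\preceq_H$ yields $\pi_H(\mathfrak b) = a_1 \cdots a_k$ together with an injection $\sigma: \llb 1, m \rrb \to \llb 1, k \rrb$ matching letters of $\mathfrak b$ to letters of $\mathfrak a^\prime$ up to $\simeq_H$-equivalence. A quick argument rules out $m = k$: a bijective $\sigma$ would give $\mathfrak a^\prime \preceq_H \mathfrak b$ by symmetry of $\simeq_H$, hence $\mathfrak b \equiv \mathfrak a^\prime \bmod \mathscr C_H$ by \cite[Proposition 4.2(iii)]{An-Tr18}, contradicting $\mathfrak b \prec_H \mathfrak a^\prime$. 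So $m < k$, and the candidate failure witness for $\mathfrak a$ is $\mathfrak c := \mathfrak b \ast a_{k+1} \ast \cdots \ast a_n$, of length $m + n - k < n$. Extending $\sigma$ to the injection $\tilde\sigma: \llb 1, m + n - k \rrb \to \llb 1, n \rrb$ that sends the $n - k$ suffix indices of $\mathfrak c$ identically to $k+1, \ldots, n$ establishes $\mathfrak c \preceq_H \mathfrak a$, since $\pi_H(\mathfrak c) = \pi_H(\mathfrak b) \cdot a_{k+1} \cdots a_n = \pi_H(\mathfrak a)$ and the letters match correctly. Since $\|\mathfrak c\|_{\mathscr A(H)} < \|\mathfrak a\|_{\mathscr A(H)}$ precludes any injection from $\llb 1, n \rrb$ into $\llb 1, m + n - k \rrb$, we conclude $\mathfrak a \not\preceq_H \mathfrak c$ and hence $\mathfrak c \prec_H \mathfrak a$, contradicting the $\preceq_H$-minimality of $\mathfrak a$.

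The argument is essentially bookkeeping on the definition of $\preceq_H$, and the only subtlety worth flagging is the equivalence between length-preserving $\preceq_H$-comparability and $\mathscr C_H$-congruence. This is precisely what lets one conclude $m < k$ from $\mathfrak b \prec_H \mathfrak a^\prime$ and, dually, $\mathfrak a \not\preceq_H \mathfrak c$ from the length drop $\|\mathfrak c\|_{\mathscr A(H)} < \|\mathfrak a\|_{\mathscr A(H)}$. Without this observation the argument does not close, but with it in hand the remainder is routine.
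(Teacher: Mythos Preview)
Your argument is correct and hinges on the same mechanism as the paper's: if a prefix $\mathfrak a'$ of $\mathfrak a$ admits a strictly $\preceq_H$-smaller word $\mathfrak b$, then $\mathfrak b$ concatenated with the remaining suffix is strictly $\preceq_H$-smaller than $\mathfrak a$. The paper runs this in the other contrapositive direction---assuming some length $k\ge 3$ is \emph{not} attained and showing by induction on $n\ge k$ that no $\mathscr A(H)$-word of length $n$ is $\preceq_H$-minimal---and must separately invoke \cite[Proposition~4.6(i)]{An-Tr18} to ensure that lengths $0,1,2$ are always attained. Your packaging as the single lemma ``prefixes of $\preceq_H$-minimal words are $\preceq_H$-minimal'' is a bit more economical: once unboundedness supplies a minimal word of length $\ge k$, the prefix of length $k$ does the job, with no induction and no appeal to the external result about short words.
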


\begin{proof}
Assume $\mathscr A(H)$ is non-empty (otherwise the conclusion is trivial) and there exists a positive integer $k \ge 3$ that is not the length of any $\preceq_H$-minimal factorization. Then let $\mathfrak a = a_1 \ast \cdots \ast a_n$ be an $\mathscr A(H)$-word of length $n \ge k$. We will show that $\mathfrak a$ is not $\preceq_H$-minimal; this will finish the proof, since it is easy to check that every $\mathscr A(H)$-word of length $0$, $1$, or $2$ is $\preceq_H$-minimal, see \cite[Proposition 4.6(i)]{An-Tr18}.

We proceed by induction. If $n = k$, there is nothing to do. So, let $n \ge k+1$ and suppose that $n-1$ is not the length of any $\preceq_H$-minimal $\mathscr A(H)$-word. Since $\bar{\mathfrak a} := a_1 \ast \cdots \ast a_{n-1}$ is an $\mathscr A(H)$-word of length $n-1$, there then exists a non-empty $\mathscr A(H)$-word $\mathfrak b = b_1 \ast \cdots \ast b_m$ of length $m \le n-2$ such that $\mathfrak b \prec_H \bar{\mathfrak a}$; namely, $\pi_H(\bar{\mathfrak a}) = \pi_H(\mathfrak b)$ and $b_1 \simeq_H a_{\sigma(1)}$, \ldots, $b_m \simeq_H a_{\sigma(m)}$ for some injection $\sigma: \llb 1, m \rrb \to \llb 1, n \rrb$. It is thus clear that $\mathfrak b \ast a_n \prec_H \bar{\mathfrak a} \ast a_n = \mathfrak a$, implying that $\mathfrak a$ is not $\preceq_H$-minimal (as wished); note, in particular, that $\pi_H(\mathfrak b \ast a_n) = \pi_H(\mathfrak b) \ast \pi_H(a_n) = \pi_H(\bar{\mathfrak a}) \ast \pi_H(a_n) = \pi_H(\mathfrak a)$.  
\end{proof}
Let $H$ be a monoid, and define $\kappa(H)$ be the supremum of all integers $k \ge 0$ for which there exists a $\preceq_H$-minimal $\mathscr A(H)$-word of length $k$. We have by Proposition \ref{prop:6.1} that $
\llb 0, \kappa(H) \rrb = \bigcup_{x \in H} \mathsf L_H^{\sf m}(x)$,
and it seems interesting to determine $\kappa(H)$ as $H$ ranges over some specified family of monoids. 

For instance, we know from \cite[Proposition 4.11(i)]{An-Tr18} that, if $K$ is a finite monoid and $\mathcal P_{\text{fin},1}(K)$ is the \emph{reduced power monoid} of $K$ (that is, the collection of all subsets of $K$ containing $1_K$ endowed with the operation of setwise multiplication induced by $K$), then 
\begin{equation}\label{equ:(23)}
\kappa(\mathcal P_{\text{fin},1}(K)) \le |K| - 1.
\end{equation}
Is it possible to provide a ``simple meaningful characterization'' of all finite monoids $K$ for which $\mathcal P_{\text{fin},1}(K)$ is atomic and \eqref{equ:(23)} holds as an equality? Note that, by \cite[Theorem 3.9]{An-Tr18}, $\mathcal P_{\text{fin},1}(K)$ is atomic if and only if $1_K \ne x^2 \ne x$ for every $x \in K \setminus \{1_H\}$; and by \cite[Lemma 5.5]{An-Tr18}, $\kappa(\mathcal P_{\text{fin},1}(K)) = |K| - 1$ when $K$ is a cyclic group of odd order.

\section*{Acknowledgments}
\label{subsec:acks}
Part of this paper was completed while the author was visiting Nankai University from Sep to Dec 2018 and the Institute for Mathematics and University of Graz from Jan to Feb 2019. The author is particularly grateful to Weidong Gao (Nankai University), Alfred Geroldinger (University of Graz), Siao Hong (Nankai University), Guoqing Wang (Tianjin Polytechnic University), and Hanbin Zhang (Chinese Academy of Sciences, Beijing) for their hospitality. 

\end{document}